\documentclass[11pt]{article}
\usepackage[T1]{fontenc}
\usepackage{lmodern}
\usepackage{amsmath,amssymb,amsthm,mathtools}
\usepackage{booktabs,array,longtable}
\usepackage[margin=1.05in]{geometry}
\usepackage{microtype}
\usepackage[bookmarks=false,hidelinks]{hyperref}
\hypersetup{
  pdftitle={Compactness and Essential Distances of Weighted Bilinear Hardy--Steklov Operators},
  pdfauthor={Saikat Kanjilal},
  pdfsubject={Compactness and essential distances for weighted bilinear Hardy--Steklov operators},
  pdfkeywords={bilinear Hardy--Steklov operator, operator localisation, compactness, essential distance, Hausdorff noncompactness, finite-rank approximation, quasi-Banach Lebesgue space}
}
\usepackage{enumitem}
\usepackage{bm}

\newtheorem{theorem}{Theorem}[section]
\newtheorem{proposition}[theorem]{Proposition}
\newtheorem{lemma}[theorem]{Lemma}
\newtheorem{corollary}[theorem]{Corollary}
\theoremstyle{definition}

\newtheorem{example}[theorem]{Example}
\theoremstyle{remark}
\newtheorem{remark}[theorem]{Remark}

\newcommand{\B}{\mathcal B}
\newcommand{\D}{\mathfrak D}

\newcommand{\Adeg}{\mathcal A_{\mathrm{deg}}}
\newcommand{\Bdeg}{\mathcal B_{\mathrm{deg}}}
\newcommand{\distfr}{\operatorname{dist}_{\mathrm{fr}}}

\title{Compactness and Essential Distances of Weighted Bilinear Hardy--Steklov Operators}
\author{Saikat Kanjilal\thanks{Corresponding author. \href{mailto:saikat.kanjilal.07@gmail.com}{\texttt{saikat.kanjilal.07@gmail.com}}}\\
Department of Mathematics, JIS University,\\
Kolkata - 700109, West Bengal, India}
\date{}

\begin{document}
\maketitle

\begin{abstract}
We study weighted bilinear Hardy--Steklov operators on the half-line for
$1<p_1,p_2<\infty$ and $0<q<\infty$.  The operator-theoretic core is an exact
output-localisation principle. For a bounded bilinear map into $L^q$, compact
middle truncations identify compactness with vanishing of a directed output-tail
invariant and, with constant one, identify that invariant with the essential
distance and a radius-based Hausdorff measure of noncompactness.  Under the
additional hypothesis of local finite-rank approximability, the same invariant
also equals the finite-rank distance and governs the rank-approximation
conclusions.  We verify these hypotheses
for the Hardy--Steklov map and then translate the tail invariant through the
five weighted boundedness regions.  In Regions
$I,\mathrm{II}_1,\mathrm{II}_2,\mathrm{III}_1$, boundedness must be supplemented
by lower- and upper-endpoint vanishing, and the essential distance is comparable
to the larger endpoint defect.  In the strict subcritical Region
$\mathrm{III}_2$, boundedness alone implies compactness.  This yields a sharp
structural transition between the equality boundary $P=Q$ and the summable
regime $P>Q$.  The quasi-Banach range $0<q<1$ is treated throughout with the
powered metric $d_q(f,g)=\|f-g\|_q^q$.
\end{abstract}

\noindent\textbf{Keywords.} bilinear Hardy--Steklov operator, operator localisation, compactness, essential distance, Hausdorff noncompactness, approximation by finite-rank maps, quasi-Banach Lebesgue space.

\noindent\textbf{2020 Mathematics Subject Classification.} Primary 47G10, 47H60. Secondary 26D10, 46A16, 46E30.

\section{Introduction}

The compactness problem for a bilinear Hardy--Steklov operator has two levels
that should be kept separate.  At the operator level, one asks whether all
noncompactness is carried by output tails.  At the weight level, one asks how
that tail obstruction is encoded by the boundedness characteristics associated
with the moving integration windows.  The main point of this paper is that the
first question has an exact answer and can be isolated before the five-region
formula calculus is used.

Our results are organised around three contributions.  First, we prove an
abstract output-localisation theorem for bounded bilinear maps
$T:X_1\times X_2\to L^q(0,\infty)$.  When every middle output truncation is
compact, the directed tail quantity
\[
 \tau(T)=\inf_{0<c<d<\infty}\|\chi_{(0,c)\cup(d,\infty)}T\|
\]
characterises compactness and equals the distance to compact bilinear maps and
the radius-based Hausdorff measure of noncompactness.  If the middle
truncations are also approximable in operator quasi-norm by finite-rank
bilinear maps, then the same quantity equals the finite-rank distance.  For the
Hardy--Steklov operator these local hypotheses follow from a direct
piecewise-constant approximation on every compact output interval.  In
particular,
\begin{equation}\label{eq:intro-exact}
 \|\B_w\|_{\mathrm e}
 =\distfr(\B_w)
 =\chi_q\bigl(\B_w(B_{p_1}\times B_{p_2})\bigr)
 =\tau(\B_w).
\end{equation}
For $0<q<1$ the corresponding metric quantities are obtained exactly by taking
the $q$th power.

A related output-localisation framework for weighted bilinear Hardy operators
on general intervals is developed in
\cite{KanjilalHardyGeneralIntervalPreprint}.  The present Hardy--Steklov
setting requires a substantially different weight-side analysis because of
the moving integration windows, including the five-region structure, fairway
geometry, central-component reconstruction, and the strict-subcritical
compactness mechanism.

Second, we translate this operator-level obstruction into explicit weight-side
conditions in all five exponent regions of the bilinear boundedness theory of
\cite{JainKanjilalStepanovUshakova2018,JainKanjilalStepanovUshakova2018Doklady}.
The identity
\[
 \chi_{(0,c)\cup(d,\infty)}\B_w=\B_{w\chi_{(0,c)\cup(d,\infty)}}
\]
reduces the compactness problem to uniform boundedness estimates for tail
weights, after which the regional characteristics identify the lower and upper
endpoint defects.

Third, the subcritical theory displays a structural phase transition.  With
\[
 P=\frac{p_1}{q},\qquad Q=\frac{p_2}{p_2-q},
\]
the equality boundary
\[
 \frac1q=\frac1{p_1}+\frac1{p_2}
 \quad\Longleftrightarrow\quad P=Q
\]
belongs to Region $\mathrm{III}_1$ and can support bounded noncompact
operators.  Under the strict inequality
\[
 \frac1q>\frac1{p_1}+\frac1{p_2}
 \quad\Longleftrightarrow\quad P>Q,
\]
the positive exponent
\[
 \frac1R=\frac1Q-\frac1P>0
\]
produces an $\ell^R$-summable block majorant.  Fixed-block extinction and
counting-measure dominated convergence then force every bounded Region
$\mathrm{III}_2$ operator to be compact.  Thus Region $\mathrm{III}_2$ is not
merely a fifth formula case. It is the strict-summability side of the
compactness transition.

For a concise statement, let $\Lambda_{\mathfrak R}^{0}$ and
$\Lambda_{\mathfrak R}^{\infty}$ denote the lower- and upper-endpoint defects
defined later for the indicated region.

\begin{theorem}[Main result, synopsis]\label{thm:intro-synopsis}
Under the standing hypotheses of Section~\ref{sec:prelim}, the following hold.
\begin{enumerate}[label=\textup{(\roman*)}]
\item If $\mathfrak R\in\{I,\mathrm{II}_1,\mathrm{II}_2,\mathrm{III}_1\}$,
then
\[
 \B_w\text{ is compact}
 \quad\Longleftrightarrow\quad
 \B_w\text{ is bounded and }
 \Lambda_{\mathfrak R}^{0}=\Lambda_{\mathfrak R}^{\infty}=0.
\]
For every bounded operator in these regions,
\[
 \|\B_w\|_{\mathrm e}\asymp
 \max\{\Lambda_{\mathfrak R}^{0},\Lambda_{\mathfrak R}^{\infty}\}.
\]
\item In Region $\mathrm{III}_2$,
\[
 \B_w\text{ is compact}\quad\Longleftrightarrow\quad \B_w\text{ is bounded},
 \qquad \|\B_w\|_{\mathrm e}=0.
\]
\item For every bounded operator, the constant-one operator identities
\eqref{eq:intro-exact} hold. For $0<q<1$ their powered-metric versions also
hold with constant one.
\end{enumerate}
\end{theorem}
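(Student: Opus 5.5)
The proof will be assembled from three layers, established in the body of the paper and then combined: an abstract localisation theorem, verification of its hypotheses for $\B_w$, and a region-by-region translation of the tail invariant $\tau(\B_w)$. I would prove (iii) first, since (i) and (ii) then reduce to computing $\tau(\B_w)$.

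\emph{Abstract layer.} Let $T:X_1\times X_2\to L^q(0,\infty)$ be bounded, and write $T_{c,d}=\chi_{(c,d)}T$ with $T-T_{c,d}=\chi_{(0,c)\cup(d,\infty)}T$. Assume every $T_{c,d}$ is compact. Then $\|T\|_{\mathrm e}\le \|T-T_{c,d}\|$, and taking the infimum gives $\|T\|_{\mathrm e}\le\tau(T)$. For the reverse inequality, take any compact $K$. The set $K(B_1\times B_2)$ is relatively compact in $L^q$, so its tails are uniformly small: for each $\varepsilon>0$ there are $c<d$ with $\|\chi_{(0,c)\cup(d,\infty)}K(x,y)\|_q<\varepsilon$ uniformly on the unit balls. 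Since multiplication by $\chi_{(0,c)\cup(d,\infty)}$ has norm one, this gives $\tau(T)\le\|\chi_{(0,c)\cup(d,\infty)}(T-K)\|+\varepsilon\le\|T-K\|+\varepsilon$.

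The same argument, run with finite $\varepsilon$-nets in place of $K$, shows that $\tau(T)$ equals the radius-based $\chi_q$. The upper bound uses nets of the compact image $T_{c,d}(B\times B)$. The lower bound uses the fact that any finite net has uniformly small tails. If each $T_{c,d}$ is a norm limit of finite-rank maps, then $\distfr\le\tau$ follows in the same way. For $0<q<1$ every step is repeated with the metric $d_q$, for which truncation is still a contraction, which yields the powered versions with constant one. Compactness is then equivalent to $\tau=0$.

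\emph{Hypotheses for $\B_w$.} On a compact output window $[c,d]$, the Steklov windows $(a(x),b(x))$ stay inside a compact subinterval of $(0,\infty)$. I would partition $[c,d]$ into small intervals and replace $w\chi_{(c,d)}$ together with the window endpoints by piecewise-constant data. This produces a finite sum of products $\int_I f\,u_1\int_J g\,u_2$ times fixed functions, which is a finite-rank bilinear map. The operator error is controlled by the local boundedness constant on the thin strips where the endpoint approximation fails. That constant is small by absolute continuity of the weight integrals, so $T_{c,d}$ is approximable by finite-rank maps and hence compact. This proves (iii).

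\emph{Weight side.} By the identity $\chi_{(0,c)\cup(d,\infty)}\B_w=\B_{w\chi_{(0,c)\cup(d,\infty)}}$ and the two-sided regional boundedness characterisation of \cite{JainKanjilalStepanovUshakova2018}, we get $\tau(\B_w)\asymp\lim_{c\to0,d\to\infty}\mathcal N_{\mathfrak R}(w\chi_{(0,c)\cup(d,\infty)})$, where $\mathcal N_{\mathfrak R}$ is the regional characteristic. I would then split this into the lower and upper pieces. The main difficulty is showing that the characteristic of the two-sided tail weight is comparable to the maximum of the characteristic of $w\chi_{(0,c)}$ and that of $w\chi_{(d,\infty)}$. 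This uses the monotonicity of $a,b$: for $c$ small and $d$ large, windows attached to the two tails are separated, so the fairway blocks decouple. In the supremum-type Regions $I,\mathrm{II}_1,\mathrm{II}_2,\mathrm{III}_1$, the limits of these pieces are by definition $\Lambda^0_{\mathfrak R}$ and $\Lambda^\infty_{\mathfrak R}$, which gives (i).

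In Region $\mathrm{III}_2$ the characteristic is an $\ell^R$ sum over fairway blocks with $1/R=1/Q-1/P>0$, and boundedness says exactly that this sum is finite. The truncated weight kills every fixed block in the limit, and the block contributions are dominated by the summable majorant. Dominated convergence for counting measure then gives $\tau=0$, hence compactness, which is (ii). I expect the most delicate step to be this decoupling and block reconstruction near the central component, where windows can straddle both $c$ and $d$. I would treat it by choosing $c$ and $d$ from the fairway partition points, so that no block crosses a cut.
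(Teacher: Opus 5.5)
Your architecture is the paper's. It consists of exact output localisation (Theorem~\ref{thm:abstractlocal}), local finite-rank approximation by freezing the windows at sample points of a partition of $[c,d]$ (Lemma~\ref{lem:localcompact}), uniform cutoff transfer through the regional characteristics (Proposition~\ref{prop:tail-transfer}), window separation for the two-tail splitting, and fixed-block extinction plus counting-measure dominated convergence in Region $\mathrm{III}_2$.

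There is one small omission. Your approximant has range spanned by the functions $w\chi_{(x_{j-1},x_j]}$, and its error is $\|w\|_{L^q(c,d)}$ times the window moduli of continuity, so you need $w\in L^q(c,d)$. This is not part of the standing hypotheses on $w$, but it follows from boundedness: test with $f_i=v_i^{p_i'-1}\chi_{[a_i(c),b_i(d)]}$, for which $H_if_i$ has a positive minimum on $[c,d]$ (Lemma~\ref{lem:wlocal}). Replacing $w$ itself by step data is unnecessary.

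The genuine gap is the device you propose for the step you call most delicate. Choosing $c,d$ among partition points so that no block crosses a cut cannot be carried out, for three reasons.
\begin{enumerate}
\item The points $\lambda_{\nu,k}$ depend on the component, and there may be infinitely many components.
\item They live in the transformed variable, not in the physical output variable where the cuts act.
\item Consecutive block supports overlap physically, so some block will cross any cut anyway.
\end{enumerate}
More to the point, in Region $\mathrm{III}_1$ the central term $\mathfrak A_\nu$ is a supremum over profiles $(s,t)$ that are not confined to single blocks. A cut crossing a profile is harmless, because a profile meeting only one tail is evaluated exactly by that one-tail weight. What must be excluded is a single profile seeing both tails.

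The missing idea is that the admissibility constraint $a(t)\le b(s)$ excludes this with a threshold independent of $\nu$.
\begin{itemize}
\item On a left component the cutoff enters only through $m_{1,W}(y)$ with $y\in[a(t),b(s)]$. A lower-tail hit gives $y<b_1(c)$, hence $t<a^{-1}(b_1(c))$. Then every $y\le b(s)\le b(t)<b(a^{-1}(b_1(c)))$, so all mass intervals lie below $a_1^{-1}(b(a^{-1}(b_1(c))))$.
\item On a right component the cutoff enters through $m_{2,W}(x)$ with $x\in[s,t]$. A hit gives $s<b_2(c)$, hence $t<a^{-1}(b(b_2(c)))$, and all mass intervals lie below $a_2^{-1}(a^{-1}(b(b_2(c))))$.
\end{itemize}
Thus for $d\ge\Psi_{ab}(c)$ one has $\mathfrak A_\nu[w_{c,d}]=\max\{\mathfrak A_\nu[w\chi_{(0,c)}],\mathfrak A_\nu[w\chi_{(d,\infty)}]\}$ simultaneously for every $\nu$, and the supremum over $\nu$preserves the maximum. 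Thresholds of the same kind, not alignment, handle the left and right Oinarov profiles. Since $\Psi(c)\downarrow0$, the family $d\ge\Psi(c)$ is cofinal.

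Your general remark that the tails decouple for $c$ small and $d$ large is correct, but the proof needs exactly this quantitative, component-uniform form of it. In Region $\mathrm{III}_2$ no decoupling is needed, since dominated convergence already gives $\tau(\B_w)=0$.
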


We now place these statements in context.  In the linear theory, weighted
Hardy--Steklov boundedness developed from integral-averaging criteria of Heinig
and Sinnamon \cite{HeinigSinnamon1998} through Oinarov-kernel, fairway, and
lower-regularity formulations
\cite{BernardisMartinReyesOrtega2007,StepanovUshakova2010MIA,StepanovUshakova2010BJMA,Ushakova2017}.
Jain and Gupta characterised compactness of the linear Hardy--Steklov operator
\cite{JainGupta2003}. Compactness for wider Oinarov-kernel classes was studied
in \cite{Ushakova2011}.  The linear variable-boundary theory already contains
the familiar dichotomy between endpoint conditions when $p\le q$ and automatic
compactness from the boundedness characteristic when $q<p$. See, in
particular, Theorem~4.1 of \cite{StepanovUshakova2010MIA}.  More recent work on
two-dimensional rectangular Hardy operators gives compactness and
noncompactness estimates \cite{StepanovUshakova2022Rectangular} and
approximation-number estimates \cite{StepanovUshakova2024RectangularApprox}.  Approximation/ideal questions for the one-dimensional Hardy--Steklov operator are represented by \cite{Ushakova2013Schatten}, while a recent measure-space extension of the boundedness theory appears in \cite{Mynbaev2024}.
Classical essential-distance and noncompactness antecedents for Hardy, Volterra,
and related integral operators include
\cite{EdmundsGurkaPick1994,EdmundsStepanov1994,Opic1994}.

At the multilinear operator-theory level, approximation numbers and measures of
noncompactness have been developed abstractly for multilinear maps between
Banach spaces \cite{FernandezMastyloSilva2013}. Concrete essential-norm
estimates for multilinear integral operators appear in \cite{Meskhi2019}.
Bilinear measures of noncompactness have also been studied in quasi-Banach
interpolation settings \cite{BesoyCobos2019}, and compact multilinear
interpolation between quasi-Banach spaces is treated in
\cite{CobosFernandezCabreraKuhn2026}.  These frameworks provide terminology
and comparison points. The exact localisation identity \eqref{eq:intro-exact}
below is proved directly from the output geometry and is not imported from an
abstract $s$-number theorem.

The bilinear weighted Hardy--Steklov boundedness problem was characterised in
\cite{JainKanjilalStepanovUshakova2018,JainKanjilalStepanovUshakova2018Doklady}.
Related bilinear Hardy inequalities appear in \cite{AguilarOrtegaRamirez2012}
and in the iterated/Oinarov setting in \cite{StepanovShambilova2019}. A
different bilinear Hardy--Steklov construction is treated in \cite{Benaissa2021}.
The present paper concerns compactness of the joint bilinear map, not
compactness of one-input slices.  The literature audit used for this manuscript
found the linear and abstract multilinear antecedents just described, but no
published result in essentially identical form giving the present five-region
bilinear Hardy--Steklov compactness and exact tail-distance package.  We do not
use a priority claim based on that search.

The paper is organised to reflect the logical dependence.  Section~2 fixes the
operator framework, the five exponent regions, and the noncompactness
normalisations.  Section~3 proves the abstract localisation theorem and verifies
its local hypotheses for the Hardy--Steklov map.  Section~4 records the
source-derived boundedness characteristics and uniform cutoff transfer.
Sections~5--7 treat the four regions in which endpoint defects may survive,
including the uniform central profile bridge.  Section~8 proves automatic
compactness in the strict subcritical region.  Sections~9--10 collect the master
classification and its essential-distance, finite-rank, Hausdorff, and
approximation consequences.  Section~11 gives four calibrating examples.  The
full Region-$\mathrm{III}_2$ block arrays and the continuous-boundary source
interface are placed in the appendices.
\section{Preliminaries and notation}\label{sec:prelim}

Throughout,
\[
 \mathbb F\in\{\mathbb R,\mathbb C\},\qquad
 1<p_1,p_2<\infty,\qquad 0<q<\infty.
\]
For $i=1,2$, let $a_i,b_i:[0,\infty)\to[0,\infty)$ be strictly increasing and
differentiable, with
\[
 a_i(0)=b_i(0)=0,\qquad 0<a_i(x)<b_i(x)<\infty\quad(x>0),
\]
and
\[
 a_i(x),b_i(x)\longrightarrow\infty\qquad(x\to\infty).
\]
No assumption $a_i'>0$ or $b_i'>0$ is made.  Let
\[
 0<v_i<\infty\ \text{a.e.},\qquad v_i^{p_i'}\in L^1_{\rm loc}(0,\infty),
 \qquad p_i'=\frac{p_i}{p_i-1},
\]
and let $w\ge0$ be a measurable weight, finite a.e.  Define
\[
 H_if(x)=\int_{a_i(x)}^{b_i(x)}f(t)v_i(t)\,dt,
 \qquad
 \B_w(f,g)(x)=w(x)H_1f(x)H_2g(x).
\]
The operator is bilinear over $\mathbb F$, without conjugation.  Since
\[
 |\B_w(f,g)|\le \B_w(|f|,|g|),
\]
positive-input estimates extend to the full real or complex bilinear map with
the same constant.
Before boundedness into $L^q$ is imposed, $\B_w:L^{p_1}\times L^{p_2}\to L^0$
is interpreted a.e.  For nonnegative extended quantities,
$\sup\varnothing=0$, $0\cdot\infty=0$, and empty or degenerate admissible
profiles have value $0$.

\subsection{Fairways and transformed geometry}
For a fairway $\sigma_i$ associated with $(a_i,b_i,v_i)$, write
\[
 \Delta_i(t)=(a_i(t),b_i(t)),\qquad
 \delta_i(t)=\bigl(b_i^{-1}(\sigma_i(t)),a_i^{-1}(\sigma_i(t))\bigr),
\]
\[
 \Delta_i^{-1}(t)=\bigl(b_i^{-1}(t),a_i^{-1}(t)\bigr),\qquad
 \delta_i^{-1}(t)=\bigl(a_i(\sigma_i^{-1}(t)),b_i(\sigma_i^{-1}(t))\bigr),
\]
Here $\delta_i^{-1}(t)=\Delta_i(\sigma_i^{-1}(t))$ is the manuscript-defined
\emph{dual fairway window}, not the inverse-incidence interval of $\delta_i$.
The fairway is specified by
\[
 \int_{a_i(x)}^{\sigma_i(x)}v_i^{p_i'}
 =\int_{\sigma_i(x)}^{b_i(x)}v_i^{p_i'}.
\]
Fix an arbitrary base point $t_*>0$ and define the oriented primitive
\[
 F_i(t)=\int_{t_*}^{t}v_i(s)^{p_i'}\,ds,\qquad t>0.
\]
Because $0<v_i<\infty$ a.e. and $v_i^{p_i'}\in L^1_{\mathrm{loc}}(0,\infty)$,
$F_i$ is finite-valued, continuous, and strictly increasing on $(0,\infty)$.
The fairway relation is equivalent to
\[
 F_i(\sigma_i(x))
 =\frac{F_i(a_i(x))+F_i(b_i(x))}{2},
\]
and hence
\[
 \sigma_i(x)=F_i^{-1}\!\left(
 \frac{F_i(a_i(x))+F_i(b_i(x))}{2}\right).
\]
Thus $\sigma_i$ is continuous and strictly increasing.  Since
$a_i(x)<\sigma_i(x)<b_i(x)$, the endpoint limits of $a_i$ and $b_i$ imply
\[
 \sigma_i(x)\longrightarrow0\quad(x\downarrow0),\qquad
 \sigma_i(x)\longrightarrow\infty\quad(x\uparrow\infty).
\]
Therefore $\sigma_i:(0,\infty)\to(0,\infty)$ is a strictly increasing
bijection, and $\sigma_i^{-1}$ is well defined and continuous.
Whenever $q<p_i$, put
\[
 \frac1{r_i}=\frac1q-\frac1{p_i}.
\]
In the subcritical regions define
\begin{equation}\label{eq:Vi}
 \mathcal V_i(t)=
 \left(\int_{\delta_i^{-1}(t)}v_i^{p_i'}\right)^{r_i/q'}v_i^{p_i'}(t),
 \qquad \frac1q+\frac1{q'}=1.
\end{equation}
At $q=1$, the standard extended convention gives $q'=\infty$ and
$1/q'=r_i/q'=0$.
For $0<q<1$, $q'<0$ in \eqref{eq:Vi}. It is used only as an algebraic
parameter and is not a Banach conjugate exponent.

Set
\[
 Y_\ell=a_1\circ b_2^{-1},\qquad Y_r=b_1\circ a_2^{-1},
 \qquad y_\ell=b_1\circ b_2^{-1},\qquad y_r=a_1\circ a_2^{-1},
\]
\[
 a(t)=\min\{y_\ell(t),y_r(t)\},\qquad
 b(t)=\max\{y_\ell(t),y_r(t)\}.
\]
The endpoint hypotheses and monotonicity give
$Y_\ell(t)<a(t)\le b(t)<Y_r(t)$ for $t>0$.
The derived maps are continuous and strictly increasing. Differentiability of
these derived boundaries is not assumed.

\subsection{The five exponent regions}
We use the mutually exclusive and exhaustive partition
\[
 I:\quad 1<\max\{p_1,p_2\}\le q<\infty,
\]
\[
 \mathrm{II}_1:\quad 1<p_1\le q<p_2<\infty,
 \qquad
 \mathrm{II}_2:\quad 1<p_2\le q<p_1<\infty,
\]
\[
 \mathrm{III}_1:\quad 0<q<\min\{p_1,p_2\},\quad
 \frac1q\le\frac1{p_1}+\frac1{p_2},
\]
\[
 \mathrm{III}_2:\quad 0<q<\min\{p_1,p_2\},\quad
 \frac1q>\frac1{p_1}+\frac1{p_2}.
\]
In the subcritical regions put
\begin{equation}\label{eq:PQR}
 P=\frac{r_1}{r_1-q}=\frac{p_1}{q},\qquad
 Q=\frac{r_2}{q}=\frac{p_2}{p_2-q}.
\end{equation}
Their conjugate exponents are denoted explicitly by
\begin{equation}\label{eq:PQconj}
 \frac1P+\frac1{P'}=1,\qquad
 \frac1Q+\frac1{Q'}=1,
 \qquad
 P'=\frac{p_1}{p_1-q}=\frac{r_1}{q},\qquad
 Q'=\frac{p_2}{q}.
\end{equation}
Thus every occurrence of $1/P'$, $R/P'$, and $R/Q'$ below refers to the
exponents in \eqref{eq:PQconj}.
Then $\mathrm{III}_1$ is equivalent to $P\le Q$, whereas
$\mathrm{III}_2$ is equivalent to $P>Q$.  The equality
$1/q=1/p_1+1/p_2$, equivalently $P=Q$, belongs to
$\mathrm{III}_1$.  In $\mathrm{III}_2$ we define
\begin{equation}\label{eq:R}
 \frac1R=\frac1Q-\frac1P
 =1-q\left(\frac1{p_1}+\frac1{p_2}\right)>0.
\end{equation}

\subsection{Compact bilinear maps and noncompactness radii}
Let $B_{p_i}$ be the closed unit ball of $L^{p_i}(0,\infty)$.  A bounded
bilinear map $T:X_1\times X_2\to L^q$ is compact if
$T(B_{X_1}\times B_{X_2})$ has compact closure.  Let
$\mathcal K_2$ and $\mathcal F_2$ denote, respectively, the compact and the
finite-rank bounded bilinear maps with the same domain and target.  We define
\[
 \|T\|_{\mathrm e}=\inf_{K\in\mathcal K_2}\|T-K\|,
 \qquad
 \distfr(T)=\inf_{F\in\mathcal F_2}\|T-F\|.
\]
The first quantity will be called the \emph{essential distance}. For $q\ge1$
it is the usual quotient distance in the normed bilinear-operator space, while
for $0<q<1$ it is a quasi-norm distance.

For a bounded set $A\subset L^q$, let
\[
 \chi_q(A)=\inf\left\{\varepsilon>0:
 A\subset\bigcup_{j=1}^N\{h:\|h-y_j\|_q<\varepsilon\}
 \text{ for some }y_1,\dots,y_N\right\}.
\]
This is the \emph{radius-based} Hausdorff measure of noncompactness with
quasi-norm balls.  It is not the diameter-based Kuratowski measure, and it
should not be identified with ball-measure conventions that use a different
normalisation.  Consequently, the constant-one identity proved below refers
exactly to this choice of radius.  For $0<q<1$ we also use the complete metric
\[
 d_q(f,g)=\|f-g\|_q^q
\]
and denote by $\chi_{d_q}$ the corresponding metric-radius Hausdorff measure.
The powered essential and finite-rank distances are
\[
 e_q(T)=\inf_{K\in\mathcal K_2}\|T-K\|^q,
 \qquad d_{\mathrm{fr},q}(T)=\inf_{F\in\mathcal F_2}\|T-F\|^q.
\]
Finally, for $n\in\mathbb N=\{1,2,\ldots\}$, define the rank-approximation
sequence
\[
 a_n^{(2)}(T)=\inf\{\|T-F\|:F\in\mathcal F_2,\ \operatorname{rank}F<n\},
\]
where $\operatorname{rank}F$ is the dimension of the linear span of the range.
When the target is Banach, this is exactly the multilinear approximation-number
convention of \cite{FernandezMastyloSilva2013}. The same rank convention and
the same strict index condition $\operatorname{rank}F<n$ are used.  That source
assumes Banach spaces.  Hence, when $0<q<1$, we use the displayed formula as a
manuscript-defined rank-approximation sequence and do not assert the full
external $s$-number framework.  Bilinear measures of noncompactness in
quasi-Banach settings are studied abstractly, for example, in
\cite{BesoyCobos2019}.

\section{Local compactness and exact output-tail localisation}\label{sec:localization}

The localisation mechanism is independent of the Hardy--Steklov weight
characteristics.  We therefore isolate it first for bilinear maps into $L^q$.
For $0<c<d<\infty$ write
\[
 M_{c,d}h=\chi_{(c,d)}h,\qquad
 R_{c,d}h=\chi_{(0,c)\cup(d,\infty)}h.
\]
Thus $M_{c,d}+R_{c,d}=I$ and both operators are contractions on $L^q$.

\begin{lemma}[Uniform tails on compact subsets of $L^q$]\label{lem:compacttails}
Let $K\subset L^q(0,\infty)$ be compact and let
$\theta=\min\{1,q\}$.  Then
\[
 \sup_{h\in K}\|R_{c,d}h\|_q\longrightarrow0
 \qquad(c\downarrow0,\ d\uparrow\infty).
\]
\end{lemma}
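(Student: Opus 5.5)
The plan is the standard finite-net argument, organised around the $\theta$-triangle inequality
\[
 \|f+g\|_q^\theta\le \|f\|_q^\theta+\|g\|_q^\theta,
\]
which holds in $L^q$ with $\theta=\min\{1,q\}$. This is the Minkowski inequality when $q\ge1$, and the subadditivity of $\int|\cdot|^q$ when $0<q<1$. We will also use two elementary facts. First, $R_{c,d}$ is linear and a contraction on $L^q$. Second, $\|R_{c,d}h\|_q$ is monotone in the directed parameter: if $c'\le c$ and $d'\ge d$, then $|R_{c',d'}h|\le|R_{c,d}h|$ pointwise. Hence the supremum over $K$ is nonincreasing along the directed set, and it suffices to produce, for each $\varepsilon>0$, one pair $(c_0,d_0)$ at which it is at most $\varepsilon$.

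The first step handles a single function. For $h\in L^q$ we have $|R_{c,d}h|^q=\chi_{(0,c)\cup(d,\infty)}|h|^q\le|h|^q\in L^1$. This pointwise bound tends to $0$ a.e. as $c\downarrow0$ and $d\uparrow\infty$; it suffices to check this along sequences, by monotonicity. Dominated convergence then gives $\|R_{c,d}h\|_q\to0$, and by monotonicity the same holds for any finite family $h_1,\dots,h_N$ simultaneously.

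The second step passes to the compact set. Given $\varepsilon>0$, choose an $\eta$ with $2\eta^\theta\le\varepsilon^\theta$, and use compactness (total boundedness) of $K$ to find $h_1,\dots,h_N\in K$ with $K\subset\bigcup_j\{h:\|h-h_j\|_q<\eta\}$. By the first step, choose $(c_0,d_0)$ with $\|R_{c_0,d_0}h_j\|_q<\eta$ for all $j$. For $h\in K$ pick $j$ with $\|h-h_j\|_q<\eta$. Then
\[
 \|R_{c,d}h\|_q^\theta\le\|R_{c,d}(h-h_j)\|_q^\theta+\|R_{c,d}h_j\|_q^\theta<2\eta^\theta\le\varepsilon^\theta
\]
for all $c\le c_0$ and $d\ge d_0$, where the contraction property and monotonicity were used. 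This gives the uniform convergence.

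No step is a genuine obstacle. The only points needing care are the quasi-Banach range $0<q<1$, where the $\theta$-power triangle inequality replaces Minkowski (which is precisely why $\theta$ appears in the statement), and making the directed limit rigorous via the monotonicity remark.
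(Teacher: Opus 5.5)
Your proposal is correct and follows essentially the same route as the paper. Both use dominated convergence for each fixed function, a finite $\eta$-net of $K$, the $\theta$-triangle inequality together with the contraction property of $R_{c,d}$, and monotonicity of the tail sets to pass to the directed limit. The only difference is cosmetic: you apply monotonicity up front to reduce to a single pair $(c_0,d_0)$, whereas the paper works along $c_n=1/n$, $d_n=n$ and invokes monotonicity at the end.
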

\begin{proof}
First take the cofinal sequence $c_n=1/n$, $d_n=n$, for $n\ge2$.  For each fixed $h$,
dominated convergence gives $\|R_{c_n,d_n}h\|_q\to0$.  Given
$\varepsilon>0$, choose a finite $\eta$-net $h_1,\dots,h_N$ for $K$ with
$\eta^\theta<\varepsilon^\theta/2$.  For large $n$, all
$\|R_{c_n,d_n}h_j\|_q^\theta<\varepsilon^\theta/2$.  Since $R_{c_n,d_n}$ is
a contraction and
\[
 \|u+v\|_q^\theta\le \|u\|_q^\theta+\|v\|_q^\theta,
\]
we obtain, for a suitable $j$,
\[
 \|R_{c_n,d_n}h\|_q^\theta
 \le \|h-h_j\|_q^\theta+\|R_{c_n,d_n}h_j\|_q^\theta
 <\varepsilon^\theta.
\]
Monotonicity of the tail sets gives the full directed limit.
\end{proof}

\begin{theorem}[Abstract bilinear output localisation]\label{thm:abstractlocal}
Let $X_1,X_2$ be Banach spaces, $0<q<\infty$, and let
$T:X_1\times X_2\to L^q(0,\infty)$ be bounded and bilinear.  Put
\[
 \gamma_T(c,d)=\|R_{c,d}T\|,
 \qquad
 \tau(T)=\inf_{0<c<d<\infty}\gamma_T(c,d).
\]
Assume that $M_{c,d}T$ is compact for every $0<c<d<\infty$.  Then
\begin{equation}\label{eq:abstract-local-basic}
 T\text{ is compact}\quad\Longleftrightarrow\quad \tau(T)=0,
 \qquad
 \|T\|_{\mathrm e}=\chi_q\bigl(T(B_{X_1}\times B_{X_2})\bigr)=\tau(T).
\end{equation}
If, in addition, every $M_{c,d}T$ is an operator-quasi-norm limit of
finite-rank bilinear maps, then
\begin{equation}\label{eq:abstract-local-fr}
 \distfr(T)=\tau(T),
 \qquad
 a_n^{(2)}(T)\downarrow\tau(T).
\end{equation}
For $0<q<1$ the corresponding powered identities are
\begin{equation}\label{eq:abstract-powered}
 e_q(T)=\chi_{d_q}\bigl(T(B_{X_1}\times B_{X_2})\bigr)=\tau(T)^q,
\end{equation}
and, under the finite-rank approximation hypothesis,
$d_{\mathrm{fr},q}(T)=\tau(T)^q$.
\end{theorem}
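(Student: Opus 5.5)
The plan is to prove two opposite chains of inequalities, each with constant one. Write $A=T(B_{X_1}\times B_{X_2})$ and $\theta=\min\{1,q\}$. Throughout I use that $\gamma_T(c,d)=\sup_{(x,y)\in B_{X_1}\times B_{X_2}}\|R_{c,d}T(x,y)\|_q$. I also use the exact additivity
\[
\|u+v\|_q^q=\|u\|_q^q+\|v\|_q^q\qquad\text{whenever }u,v\text{ have disjoint supports},
\]
which holds for every $0<q<\infty$. Throughout, $(c,d)$ is taken along the directed family $c\downarrow0$, $d\uparrow\infty$.

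\emph{Upper bounds by $\tau$.}
\begin{itemize}
\item Since $T-M_{c,d}T=R_{c,d}T$ and $M_{c,d}T$ is compact, $\|T\|_{\mathrm e}\le\gamma_T(c,d)$. Taking the infimum gives $\|T\|_{\mathrm e}\le\tau(T)$.
\item For $\chi_q$: the set $M_{c,d}A$ is relatively compact, so for any $\eta>0$ it has a finite $\eta$-net $y_1,\dots,y_N$. Replacing $y_j$ by $M_{c,d}y_j$ keeps it an $\eta$-net, because $M_{c,d}$ is a contraction fixing $M_{c,d}A$. Then for $h\in A$, the pieces $M_{c,d}h-y_j$ and $R_{c,d}h$ have disjoint supports, so
\[
\|h-y_j\|_q^q=\|M_{c,d}h-y_j\|_q^q+\|R_{c,d}h\|_q^q\le\eta^q+\gamma_T(c,d)^q .
\]
Letting $\eta\to0$ and then taking the infimum over $(c,d)$ gives $\chi_q(A)\le\tau(T)$.
\end{itemize}

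\emph{Lower bounds.}
\begin{itemize}
\item For $\chi_q$: if $A$ is covered by balls of radius $\varepsilon$ around $y_1,\dots,y_N$, then for $h\in A$ with $\|h-y_j\|_q<\varepsilon$ the $\theta$-triangle inequality gives
\[
\|R_{c,d}h\|_q^\theta\le\varepsilon^\theta+\max_j\|R_{c,d}y_j\|_q^\theta .
\]
The last term tends to $0$ along the directed family (Lemma~\ref{lem:compacttails} for the finite set $\{y_j\}$). Hence $\tau(T)\le\varepsilon$, and so $\tau(T)\le\chi_q(A)$.
\item For $\|T\|_{\mathrm e}$: take $K\in\mathcal K_2$. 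Then
\[
\gamma_T(c,d)^\theta\le\|T-K\|^\theta+\sup_{h\in\overline{K(B\times B)}}\|R_{c,d}h\|_q^\theta ,
\]
and the last term vanishes in the limit by Lemma~\ref{lem:compacttails} applied to the compact set $\overline{K(B\times B)}$. Hence $\tau(T)\le\|T-K\|$ and $\tau(T)\le\|T\|_{\mathrm e}$.
\end{itemize}
This yields $\|T\|_{\mathrm e}=\chi_q(A)=\tau(T)$.

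\emph{Compactness equivalence.} If $T$ is compact, then $\|T\|_{\mathrm e}=0$, so $\tau(T)=0$. Conversely, $\tau(T)=0$ gives $\chi_q(A)=0$, so $A$ is totally bounded. Since $L^q$ is complete (in the metric $d_q$ when $q<1$), $\overline A$ is compact.

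\emph{Finite-rank part.} Bounded finite-rank bilinear maps are compact, since they map into a bounded subset of a finite-dimensional space. Hence $\distfr(T)\ge\|T\|_{\mathrm e}=\tau(T)$. For the reverse inequality, pick $F\in\mathcal F_2$ with $\|M_{c,d}T-F\|\le\eta$. Then $M_{c,d}F$ is still finite rank and satisfies $\|M_{c,d}T-M_{c,d}F\|\le\eta$. Moreover
\[
T-M_{c,d}F=R_{c,d}T+M_{c,d}(T-F)
\]
is a sum of two pieces with disjoint supports, so
\[
\|T-M_{c,d}F\|^q\le\gamma_T(c,d)^q+\eta^q .
\]
Thus $\distfr(T)\le\tau(T)$. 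The sequence $a_n^{(2)}(T)$ is nonincreasing, bounded below by $\distfr(T)$, and has infimum $\distfr(T)$. It therefore decreases to $\tau(T)$.

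\emph{Powered versions.} For $0<q<1$, the $d_q$-balls of radius $\varepsilon^q$ are exactly the quasi-norm balls of radius $\varepsilon$, so $\chi_{d_q}(A)=\chi_q(A)^q$. Also $t\mapsto t^q$ is increasing and continuous, so infima commute with it. Hence $e_q(T)=\|T\|_{\mathrm e}^q$ and $d_{\mathrm{fr},q}(T)=\distfr(T)^q$, which gives \eqref{eq:abstract-powered} and its finite-rank analogue.

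\emph{Main obstacle.} The delicate point is keeping constant one in the quasi-Banach range. The plan avoids the quasi-triangle constant in two ways. In the upper bounds, every splitting is arranged to have disjoint supports, which requires projecting the net points and the approximants by $M_{c,d}$. In the lower bounds, the $\theta$-triangle inequality is only used when the second term vanishes in the limit.
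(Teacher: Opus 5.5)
Your proposal is correct and follows essentially the same route as the paper: the compact-tails lemma for both lower bounds (with the $\theta$-triangle inequality and a vanishing second term), the projected approximant $M_{c,d}F$ with disjoint-support additivity for $\distfr(T)$, and the radius rescaling $\rho\leftrightarrow\rho^{1/q}$ for the powered identities. The differences are only organisational. You derive the compactness equivalence as a corollary of $\|T\|_{\mathrm e}=\chi_q(A)=\tau(T)$ together with completeness of $L^q$, whereas the paper proves it first. In the $\chi_q$ upper bound you project the net centres by $M_{c,d}$ to get exact additivity, whereas the paper uses the $\theta$-triangle inequality, which already gives constant one once $\varepsilon\downarrow0$; so the projection is a convenience, not the necessity your closing remark suggests.
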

\begin{proof}
Let $\theta=\min\{1,q\}$.  If $\tau(T)=0$, monotonicity of
$\gamma_T(c,d)$ allows a cofinal sequence $c_n\downarrow0$, $d_n\uparrow\infty$
with $\gamma_T(c_n,d_n)\to0$.  Since
\[
 \|T-M_{c_n,d_n}T\|=\gamma_T(c_n,d_n),
\]
$T$ is an operator-quasi-norm limit of compact bilinear maps and is compact.
Indeed, finite nets for the image of one sufficiently close compact
approximant give finite nets for $T(B_{X_1}\times B_{X_2})$ by the
$\theta$-triangle inequality.  Conversely, if $T$ is compact, the closure of
$T(B_{X_1}\times B_{X_2})$ is compact, and Lemma~\ref{lem:compacttails} gives
$\gamma_T(c,d)\to0$.  This proves the compactness assertion.

Because $M_{c,d}T$ is compact,
$\|T\|_{\mathrm e}\le\gamma_T(c,d)$ and hence
$\|T\|_{\mathrm e}\le\tau(T)$.  For the reverse estimate let $K$ be any
compact bilinear map.  Then
\[
 \gamma_T(c,d)^\theta
 \le \|R_{c,d}(T-K)\|^\theta+\|R_{c,d}K\|^\theta
 \le \|T-K\|^\theta+\|R_{c,d}K\|^\theta.
\]
The image of the product unit ball under $K$ has compact closure, so the last
term tends to zero by Lemma~\ref{lem:compacttails}.  Taking the directed limit
gives $\tau(T)\le\|T-K\|$, and infimising over $K$ proves
$\|T\|_{\mathrm e}=\tau(T)$.

Assume now local finite-rank approximability.  Given $c,d$ and
$\varepsilon>0$, choose finite-rank $G$ with
$\|M_{c,d}T-G\|<\varepsilon$ and set $F=M_{c,d}G$.  Then $F$ is still
finite-rank, is supported in $(c,d)$, and
$\|M_{c,d}T-F\|<\varepsilon$.  Since every finite-rank map is compact,
$\|T\|_{\mathrm e}\le\distfr(T)$.  Conversely, the two terms in
\[
 T-F=R_{c,d}T+(M_{c,d}T-F)
\]
have disjoint output supports, and therefore, for unit inputs,
\[
 \|(T-F)(x_1,x_2)\|_q^q
 =\|R_{c,d}T(x_1,x_2)\|_q^q
  +\|(M_{c,d}T-F)(x_1,x_2)\|_q^q.
\]
Hence
\[
 \|T-F\|\le(\gamma_T(c,d)^q+\varepsilon^q)^{1/q}.
\]
Letting $\varepsilon\downarrow0$ and taking the directed infimum gives
$\distfr(T)\le\tau(T)$, so equality follows.  Moreover the sets
$\{F:\operatorname{rank}F<n\}$ increase with $n$ and their union is the class
of all finite-rank bilinear maps.  Thus the decreasing sequence
$a_n^{(2)}(T)$ has limit $\distfr(T)$, with no index shift.

It remains to prove the Hausdorff equality without using a Banach triangle
inequality when $q<1$.  Let
\[
 A=T(B_{X_1}\times B_{X_2}).
\]
For fixed $c,d$, the middle image $M_{c,d}A$ is relatively compact.  If
$y_1,\dots,y_N$ form an $\varepsilon$-net for that middle image, then for every
unit pair and a suitable $j$,
\[
 \|T(x_1,x_2)-y_j\|_q^\theta
 \le \|R_{c,d}T(x_1,x_2)\|_q^\theta
    +\|M_{c,d}T(x_1,x_2)-y_j\|_q^\theta.
\]
Consequently
\begin{equation}\label{eq:haus-upper-theta}
 \chi_q(A)\le
 \bigl(\gamma_T(c,d)^\theta+\varepsilon^\theta\bigr)^{1/\theta}.
\end{equation}
First let $\varepsilon\downarrow0$ and then take the directed infimum in
$c,d$ to obtain $\chi_q(A)\le\tau(T)$.

Conversely, suppose that $A$ has an $\varepsilon$-net with centres
$y_1,\dots,y_N$.  Applying $R_{c,d}$ and the $\theta$-triangle inequality gives
\begin{equation}\label{eq:haus-lower-theta}
 \gamma_T(c,d)^\theta
 \le \varepsilon^\theta+
 \max_{1\le j\le N}\|R_{c,d}y_j\|_q^\theta.
\end{equation}
The last maximum tends to zero because the set of centres is finite.  The
directed limit therefore gives $\tau(T)\le\varepsilon$.  Infimising over all
admissible radii proves $\tau(T)\le\chi_q(A)$.

Finally, when $0<q<1$, a $d_q$-ball of metric radius $\rho$ is exactly a
quasi-norm ball of radius $\rho^{1/q}$.  Therefore
$\chi_{d_q}(A)=\chi_q(A)^q$, and the powered distance identities follow by the
same monotonicity of the map $t\mapsto t^q$.  No local convexity is used.
\end{proof}

\subsection*{Verification for the Hardy--Steklov operator}
For $0<c<d<\infty$ define
\[
 E_{c,d}=(0,c)\cup(d,\infty),\qquad
 w_{c,d}=w\chi_{E_{c,d}}.
\]
The output cutoff and the weight cutoff agree exactly,
\begin{equation}\label{eq:cutoff}
 R_{c,d}\B_w=\B_{w_{c,d}}.
\end{equation}
Accordingly,
\begin{equation}\label{eq:gamma-tau}
 \gamma(c,d)=\|R_{c,d}\B_w\|=\|\B_{w_{c,d}}\|,
 \qquad
 \tau(\B_w)=\inf_{0<c<d<\infty}\gamma(c,d).
\end{equation}

\begin{lemma}[Positive inputs and local output integrability]\label{lem:wlocal}
If $\B_w$ is bounded, then $w\in L^q(c,d)$ for every $0<c<d<\infty$.
\end{lemma}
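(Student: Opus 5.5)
The plan is to test $\B_w$ on positive indicator inputs whose Steklov averages stay bounded below on the compact interval $[c,d]$. Fix $0<c<d<\infty$. Since $a_i,b_i$ are continuous and strictly increasing with $a_i<b_i$ on $(0,\infty)$, the windows $\Delta_i(x)=(a_i(x),b_i(x))$ for $x\in[c,d]$ all lie inside the compact interval $J_i=[a_i(c),b_i(d)]\subset(0,\infty)$. Put $f=\chi_{J_1}v_1^{p_1'-1}$ and $g=\chi_{J_2}v_2^{p_2'-1}$. Because $(p_i'-1)p_i=p_i'$, we get $\|f\|_{p_1}^{p_1}=\int_{J_1}v_1^{p_1'}<\infty$ by local integrability of $v_1^{p_1'}$, and likewise for $g$. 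So $f\in L^{p_1}$ and $g\in L^{p_2}$, and both are nonnegative.

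The next step is to compute the averages. For $x\in(c,d)$,
\[
 H_1f(x)=\int_{a_1(x)}^{b_1(x)}v_1^{p_1'}\ge \int_{a_1(d)}^{b_1(c)}v_1^{p_1'}
\]
provided $a_1(d)<b_1(c)$. This lower bound is strictly positive because $v_1>0$ a.e. on an interval of positive length. In general, however, $a_1(d)$ may exceed $b_1(c)$. I would handle this by a compactness cover rather than a single pair of inputs. The function $x\mapsto \int_{a_1(x)}^{b_1(x)}v_1^{p_1'}=F_1(b_1(x))-F_1(a_1(x))$ is continuous, since $F_1$ is continuous. It is also strictly positive on $[c,d]$, because $a_1(x)<b_1(x)$ and $v_1>0$ a.e. Hence it has a positive minimum $m_1$ on $[c,d]$, and similarly $H_2g\ge m_2>0$ there. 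This removes the need to subdivide.

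Boundedness then gives
\[
 m_1m_2\|w\chi_{(c,d)}\|_q\le \|\B_w(f,g)\|_q\le \|\B_w\|\,\|f\|_{p_1}\|g\|_{p_2}<\infty,
\]
and hence $w\in L^q(c,d)$. The only point needing care is the step above: the positivity and continuity of $x\mapsto F_i(b_i(x))-F_i(a_i(x))$ on the closed interval $[c,d]$. Both follow from the properties of the primitive $F_i$ recorded in Section~\ref{sec:prelim}. Everything else is the routine choice of test functions, and no sign issues arise because the inputs are nonnegative.
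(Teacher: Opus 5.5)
Your proof is correct and is essentially the paper's argument. It uses the same test inputs $v_i^{p_i'-1}\chi_{[a_i(c),b_i(d)]}$, the same continuity-and-positivity argument giving a positive minimum of $\int_{a_i(x)}^{b_i(x)}v_i^{p_i'}$ on $[c,d]$, and the same pointwise domination of $w\chi_{(c,d)}$ by a multiple of $\B_w(f,g)$. The aside about the case $a_1(d)<b_1(c)$ and a possible ``compactness cover'' is unnecessary and can be dropped.
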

\begin{proof}
Fix $c<d$ and put $I_i=[a_i(c),b_i(d)]$.  Let
$f_i=v_i^{p_i'-1}\chi_{I_i}$.  Then $f_i\in L^{p_i}$ and, for $x\in[c,d]$,
\[
 H_if_i(x)=\int_{a_i(x)}^{b_i(x)}v_i^{p_i'}>0.
\]
The last expression is continuous in $x$ and has a positive minimum on
$[c,d]$.  Since $\B_w(f_1,f_2)\in L^q$, $w\chi_{(c,d)}$ is bounded pointwise by
a constant multiple of $|\B_w(f_1,f_2)|$, which proves the claim.
\end{proof}

\begin{lemma}[Local finite-rank approximation]\label{lem:localcompact}
If $\B_w$ is bounded, then $M_{c,d}\B_w=\chi_{(c,d)}\B_w$ is compact for every
$0<c<d<\infty$.  In fact it is an operator-quasi-norm limit of finite-rank
bilinear maps.
\end{lemma}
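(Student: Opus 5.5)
We approximate $M_{c,d}\B_w$ by piecewise-constant-in-output bilinear maps. Since the map is positive, it suffices to control the approximation error on nonnegative inputs. We use that $|\B_w(f,g)|\le\B_w(|f|,|g|)$, and that the difference operators we build are dominated pointwise by positive operators evaluated at $(|f|,|g|)$.

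\textbf{Step 1 (uniform control of the moving windows on $[c,d]$).} Fix $0<c<d<\infty$. Put $I_i=[a_i(c),b_i(d)]$, so that every window $\Delta_i(x)$ with $x\in[c,d]$ lies in $I_i$. On $I_i$, H\"older's inequality gives
\[
|H_if(x)-H_if(x')|\le \|f\|_{p_i}\Bigl(\int_{\Delta_i(x)\triangle\Delta_i(x')}v_i^{p_i'}\Bigr)^{1/p_i'}.
\]
The symmetric difference is contained in $(a_i(x),a_i(x'))\cup(b_i(x),b_i(x'))$. By continuity of $a_i,b_i$ and of the primitive $F_i$, the quantity
\[
\omega_i(\eta)=\sup_{x,x'\in[c,d],\,|x-x'|\le\eta}\bigl(|F_i(a_i(x))-F_i(a_i(x'))|+|F_i(b_i(x))-F_i(b_i(x'))|\bigr)
\]
tends to $0$ as $\eta\to0$, by uniform continuity on a compact set. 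Hence $\{H_if:\|f\|_{p_i}\le1\}$ is uniformly equicontinuous on $[c,d]$. It is also uniformly bounded there, with
\[
|H_if(x)|\le\Bigl(\int_{I_i}v_i^{p_i'}\Bigr)^{1/p_i'}=:C_i.
\]

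\textbf{Step 2 (finite-rank approximant).} Partition $(c,d)$ into $N$ intervals $J_k$ of length $\eta$, with left endpoints $x_k$. Define
\[
G_N(f,g)(x)=w(x)\sum_k\chi_{J_k}(x)\,H_1f(x_k)\,H_2g(x_k).
\]
Each map $(f,g)\mapsto H_1f(x_k)H_2g(x_k)$ is a bounded bilinear form, since $f\mapsto H_1f(x_k)$ is integration against $v_1\chi_{\Delta_1(x_k)}\in L^{p_1'}$. The range of $G_N$ therefore lies in the span of the $N$ functions $w\chi_{J_k}$, which belong to $L^q$ by Lemma~\ref{lem:wlocal}. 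So $G_N$ is a bounded finite-rank bilinear map into $L^q$.

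\textbf{Step 3 (error estimate).} For $x\in J_k$ and unit inputs we use the splitting
\[
ab-a'b'=(a-a')b+a'(b-b').
\]
Together with Step 1 this gives
\[
|M_{c,d}\B_w(f,g)(x)-G_N(f,g)(x)|\le w(x)\bigl(C_2\,\omega_1(\eta)^{1/p_1'}+C_1\,\omega_2(\eta)^{1/p_2'}\bigr).
\]
Taking $L^q$ quasi-norms,
\[
\|M_{c,d}\B_w-G_N\|\le \|w\|_{L^q(c,d)}\bigl(C_2\,\omega_1(\eta)^{1/p_1'}+C_1\,\omega_2(\eta)^{1/p_2'}\bigr)\longrightarrow0\qquad(\eta\to0).
\]
This holds for complex inputs as well, because the estimate is pointwise. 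Hence $M_{c,d}\B_w$ is an operator-quasi-norm limit of finite-rank maps.

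Compactness then follows as in the proof of Theorem~\ref{thm:abstractlocal}. Each $G_N$ is compact, since its image of the product ball is a bounded set in a finite-dimensional space. Finite nets transfer to $M_{c,d}\B_w$ through the $\theta$-triangle inequality with $\theta=\min\{1,q\}$.

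\textbf{Main obstacle.} The delicate point is Step 1: it needs \emph{uniform} equicontinuity of the window integrals. This relies only on continuity of $a_i,b_i$ and on local integrability of $v_i^{p_i'}$. It does not use differentiability or $a_i'>0$, which the paper's standing hypotheses do not guarantee. A secondary point is the local integrability of $w$, which is essential and is supplied by Lemma~\ref{lem:wlocal}.
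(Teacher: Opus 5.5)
Your proposal is correct and is essentially the paper's proof: H\"older's inequality gives uniform equicontinuity and a uniform bound for $H_if$ on $[c,d]$ over the unit ball, and the piecewise-constant-in-output approximant has range spanned by the functions $w\chi_{J_k}\in L^q$ (Lemma~\ref{lem:wlocal}), so the pointwise error bound times $\|w\|_{L^q(c,d)}$ gives the finite-rank approximation and the finite-net argument gives compactness. Your opening reduction to nonnegative inputs is unnecessary, since your Step~3 estimate is already pointwise for arbitrary real or complex inputs.
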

\begin{proof}
For $i=1,2$ and $x,y\in[c,d]$, H\"older's inequality gives, uniformly for
$\|f\|_{p_i}\le1$,
\[
 |H_if(x)-H_if(y)|
 \le \left(\int_{I_i(x)\triangle I_i(y)}v_i^{p_i'}\right)^{1/p_i'},
 \qquad I_i(x)=(a_i(x),b_i(x)).
\]
The endpoints are uniformly continuous on $[c,d]$, and the integral of
$v_i^{p_i'}$ is absolutely continuous on the compact interval
$[a_i(c),b_i(d)]$.  Hence the right-hand side tends to zero uniformly as
$|x-y|\to0$.  Also
\[
 |H_if(x)|\le M_i:=
 \left(\int_{a_i(c)}^{b_i(d)}v_i^{p_i'}\right)^{1/p_i'}.
\]

Take a partition $c=x_0<\cdots<x_m=d$ and choose $\xi_j\in[x_{j-1},x_j]$.
Define
\[
 F_{\mathcal P}(f,g)(x)=w(x)\sum_{j=1}^m
 \chi_{(x_{j-1},x_j]}(x)H_1f(\xi_j)H_2g(\xi_j).
\]
By Lemma~\ref{lem:wlocal}, each $w\chi_{(x_{j-1},x_j]}$ belongs to $L^q$, so
$F_{\mathcal P}$ has finite-dimensional range.  If $\omega_i(\delta)$ denotes
the preceding uniform modulus of continuity for $H_i$, then on unit balls
\[
 |H_1f(x)H_2g(x)-H_1f(\xi_j)H_2g(\xi_j)|
 \le M_2\omega_1(|\mathcal P|)+M_1\omega_2(|\mathcal P|).
\]
Consequently
\[
 \|M_{c,d}\B_w-F_{\mathcal P}\|
 \le \|w\|_{L^q(c,d)}
 \bigl(M_2\omega_1(|\mathcal P|)+M_1\omega_2(|\mathcal P|)\bigr)
 \longrightarrow0.
\]
This estimate uses no Minkowski inequality.  Finite-rank bilinear maps are
compact, and an operator-quasi-norm limit of compact bilinear maps is compact by
the finite-net argument used in Theorem~\ref{thm:abstractlocal}.  Thus the two
local hypotheses of that theorem hold.
\end{proof}

\begin{theorem}[Exact tail criterion]\label{thm:tailcompact}
For every bounded $\B_w$,
\[
 \B_w\text{ is compact}\quad\Longleftrightarrow\quad \tau(\B_w)=0.
\]
\end{theorem}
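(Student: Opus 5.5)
The plan is to obtain the statement as a direct specialisation of Theorem~\ref{thm:abstractlocal}, with $X_i=L^{p_i}(0,\infty)$ for $i=1,2$ and $T=\B_w$, regarded as a bounded bilinear map into $L^q(0,\infty)$. The spaces $L^{p_i}$ are Banach because $1<p_i<\infty$. The bilinearity of $\B_w$ over $\mathbb F$ is built into its definition, and boundedness is the standing hypothesis of the statement. All that remains is to check the single local hypothesis of Theorem~\ref{thm:abstractlocal}, namely that $M_{c,d}\B_w$ is compact for every $0<c<d<\infty$.

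That local hypothesis is exactly the first assertion of Lemma~\ref{lem:localcompact}. Its proof relies on Lemma~\ref{lem:wlocal}, which shows that $w\in L^q(c,d)$, and on the uniform equicontinuity of $H_1f$ and $H_2g$ on $[c,d]$ over the unit balls. These two facts make the piecewise-constant approximants $F_{\mathcal P}$ finite-rank and force them to converge to $M_{c,d}\B_w$ in operator quasi-norm. Lemma~\ref{lem:localcompact} then closes the argument by noting that a quasi-norm limit of compact bilinear maps is compact, using the finite-net argument with the $\theta$-triangle inequality, $\theta=\min\{1,q\}$. This is the reason the quasi-Banach range $0<q<1$ causes no difficulty.

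With the hypothesis verified, Theorem~\ref{thm:abstractlocal} yields that $\B_w$ is compact if and only if $\tau(\B_w)=0$, where
\[
\tau(\B_w)=\inf_{0<c<d<\infty}\|R_{c,d}\B_w\|.
\]
This is precisely the quantity in \eqref{eq:gamma-tau}, and by the exact cutoff identity \eqref{eq:cutoff} it equals $\inf_{c<d}\|\B_{w_{c,d}}\|$. I would note this identification explicitly, since the later weight-side analysis works with $\B_{w_{c,d}}$.

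I do not expect a real obstacle here, because all the substantive work sits in the earlier results. The one point needing care is the meaning of compactness. The abstract theorem uses relative compactness of $T(B_{X_1}\times B_{X_2})$ together with the directed-limit form of Lemma~\ref{lem:compacttails}, and both are stated for the quasi-norm topology of $L^q$ for every $0<q<\infty$. So the definitions used in the statement and in Theorem~\ref{thm:abstractlocal} coincide, and no separate treatment of $q<1$ is needed.
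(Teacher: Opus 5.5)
Your proposal is correct and is the paper's own proof: Theorem~\ref{thm:abstractlocal} is applied to $T=\B_w$ with $X_i=L^{p_i}$. The local compactness hypothesis comes from Lemma~\ref{lem:localcompact}, and $\tau(\B_w)$ is identified through the cutoff identity \eqref{eq:cutoff}.
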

\begin{proof}
Apply Theorem~\ref{thm:abstractlocal} with $T=\B_w$, using
Lemma~\ref{lem:localcompact} and the exact identity \eqref{eq:cutoff}.
\end{proof}

\begin{theorem}[Exact essential-distance and noncompactness identity]\label{thm:exactdistance}
For every bounded $\B_w$,
\begin{equation}\label{eq:exactdistance}
 \|\B_w\|_{\mathrm e}
 =\distfr(\B_w)
 =\chi_q\bigl(\B_w(B_{p_1}\times B_{p_2})\bigr)
 =\tau(\B_w).
\end{equation}
If $0<q<1$, then
\begin{equation}\label{eq:powered}
 e_q(\B_w)=d_{\mathrm{fr},q}(\B_w)
 =\chi_{d_q}\bigl(\B_w(B_{p_1}\times B_{p_2})\bigr)
 =\tau(\B_w)^q.
\end{equation}
\end{theorem}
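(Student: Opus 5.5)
The plan is to obtain Theorem~\ref{thm:exactdistance} as a direct specialisation of Theorem~\ref{thm:abstractlocal}, taking $X_i=L^{p_i}(0,\infty)$ and $T=\B_w$. Since $1<p_i<\infty$, both $X_i$ are Banach spaces. The map $\B_w$ is bilinear and, by assumption, bounded into $L^q$. What remains is to confirm the two local hypotheses of the abstract theorem. Lemma~\ref{lem:localcompact} supplies both of them at once. For every $0<c<d<\infty$ the truncation $M_{c,d}\B_w$ is compact, and it is an operator-quasi-norm limit of the piecewise-constant finite-rank maps $F_{\mathcal P}$.

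With these hypotheses in place, \eqref{eq:abstract-local-basic} gives $\|\B_w\|_{\mathrm e}=\chi_q(\B_w(B_{p_1}\times B_{p_2}))=\tau(\B_w)$. The finite-rank part \eqref{eq:abstract-local-fr} gives $\distfr(\B_w)=\tau(\B_w)$. Together these yield \eqref{eq:exactdistance}. Two identifications need a brief check here. The first is that $\tau(\B_w)$ in \eqref{eq:gamma-tau} is the abstract $\tau(T)$; this is immediate from \eqref{eq:cutoff}, because $\gamma_T(c,d)=\|R_{c,d}\B_w\|=\|\B_{w_{c,d}}\|$. The second is that the unit balls $B_{X_i}$ are exactly the closed balls $B_{p_i}$ used in the statement.

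For $0<q<1$, the powered identities \eqref{eq:powered} follow from \eqref{eq:abstract-powered} and from the addendum $d_{\mathrm{fr},q}(T)=\tau(T)^q$, which is available because the finite-rank hypothesis holds. If one prefers to see it directly, it also follows because $t\mapsto t^q$ is increasing and continuous, so it commutes with the infima defining $e_q$ and $d_{\mathrm{fr},q}$. Moreover $\chi_{d_q}=\chi_q^q$, since a $d_q$-ball of radius $\rho$ is a quasi-norm ball of radius $\rho^{1/q}$.

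There is no serious obstacle, since all analytic content has been placed in Lemma~\ref{lem:localcompact} and Theorem~\ref{thm:abstractlocal}. The one point to double-check is that the finite-rank approximants $F_{\mathcal P}$ really map into $L^q$ with finite-dimensional range; this relies on Lemma~\ref{lem:wlocal}, which requires the boundedness of $\B_w$ assumed here. A second check is that "compact" in $\mathcal K_2$ and "rank" in $\mathcal F_2$ refer to the same domain and target as in the abstract theorem, so that the infima coincide.
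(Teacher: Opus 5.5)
Your proposal is correct and matches the paper's proof. The paper also obtains the theorem as the Hardy--Steklov specialisation of Theorem~\ref{thm:abstractlocal}, with both local hypotheses supplied by Lemma~\ref{lem:localcompact}, $\tau$ identified through the cutoff identity \eqref{eq:cutoff}, and the powered case coming from $\chi_{d_q}=\chi_q^q$. Your extra checks (Banach domains, the use of Lemma~\ref{lem:wlocal}, and monotonicity of $t\mapsto t^q$) only make explicit what the paper leaves implicit.
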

\begin{proof}
All assertions are the Hardy--Steklov specialisation of
Theorem~\ref{thm:abstractlocal}.  In particular, the Hausdorff part is governed
by \eqref{eq:haus-upper-theta}--\eqref{eq:haus-lower-theta}, so no ordinary
Banach triangle inequality is used when $0<q<1$.
\end{proof}

\begin{corollary}[Rank-approximation limit]\label{cor:approx}
For every bounded $\B_w$,
\[
 a_n^{(2)}(\B_w)\downarrow\tau(\B_w)=\|\B_w\|_{\mathrm e}.
\]
For $q\ge1$ this is the standard multilinear approximation-number sequence of
\cite{FernandezMastyloSilva2013}. For $0<q<1$ it is the rank-approximation
sequence defined in Section~\ref{sec:prelim}.
\end{corollary}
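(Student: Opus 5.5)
The corollary follows by specialising the finite-rank part of Theorem~\ref{thm:abstractlocal} to $T=\B_w$, with $X_i=L^{p_i}(0,\infty)$. First I check the hypotheses of that theorem. Boundedness of $\B_w$ is assumed. Lemma~\ref{lem:localcompact} shows that every middle truncation $M_{c,d}\B_w$ is compact and is an operator-quasi-norm limit of finite-rank bilinear maps. Hence both \eqref{eq:abstract-local-basic} and \eqref{eq:abstract-local-fr} apply.

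From \eqref{eq:abstract-local-fr} we get $a_n^{(2)}(\B_w)\downarrow\tau(\B_w)$. From \eqref{eq:abstract-local-basic}, or equivalently from Theorem~\ref{thm:exactdistance}, we get $\tau(\B_w)=\|\B_w\|_{\mathrm e}$.

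For completeness, I recall why the sequence decreases to $\distfr$ with no index shift. The classes $\{F\in\mathcal F_2:\operatorname{rank}F<n\}$ increase in $n$, so $a_n^{(2)}$ is nonincreasing. Their union is all of $\mathcal F_2$, so $\inf_n a_n^{(2)}(\B_w)=\distfr(\B_w)$. Combining this with $\distfr(\B_w)=\tau(\B_w)$ gives the claim.

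The only point needing care is the interpretive remark. For $q\ge1$, $L^q$ is Banach, and $a_n^{(2)}$ as defined in Section~\ref{sec:prelim} coincides with the convention of \cite{FernandezMastyloSilva2013}: rank is the dimension of the span of the range, and the index condition is the strict $\operatorname{rank}F<n$. For $0<q<1$ nothing beyond the displayed definition is used, and the argument above never invokes a triangle inequality. I expect no real obstacle; the substantive work already lies in Lemma~\ref{lem:localcompact} and Theorem~\ref{thm:abstractlocal}.
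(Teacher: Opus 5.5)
Your proposal is correct and follows the paper's route exactly: the paper obtains the corollary by applying \eqref{eq:abstract-local-fr} of Theorem~\ref{thm:abstractlocal} to $T=\B_w$, with the local finite-rank hypothesis supplied by Lemma~\ref{lem:localcompact} and the identity $\tau(\B_w)=\|\B_w\|_{\mathrm e}$ taken from the same theorem (Theorem~\ref{thm:exactdistance}). Your recap of why $a_n^{(2)}$ decreases to $\distfr$ with no index shift repeats an argument already contained in the proof of Theorem~\ref{thm:abstractlocal}.
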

\begin{proof}
This is \eqref{eq:abstract-local-fr} applied to $\B_w$.
\end{proof}

\section{Weighted boundedness characteristics}\label{sec:bounded}

The boundedness results in this section are source-derived.  For Regions
$I,\mathrm{II}_1,\mathrm{II}_2$ they are the corresponding characteristics in
\cite{JainKanjilalStepanovUshakova2018}. The subcritical reduction uses that
paper together with the Oinarov-kernel machinery of
\cite{StepanovUshakova2010MIA,StepanovUshakova2010BJMA}.  Appendix~\ref{app:interface}
identifies the exact source theorems and proves the measure-pullback extension
needed because the derived boundaries are only continuous and strictly
increasing.  All comparisons below have constants depending only on the
exponent and fixed structural parameters, never on an output cutoff.

For a nonnegative measurable output weight $W$, define the following
characteristics.

\subsection{Regions $I$, $\mathrm{II}_1$, and $\mathrm{II}_2$}
In Region $I$, set
\begin{equation}\label{eq:DI}
D_I[W]=\sup_{t,s>0}
\left(\int_{\delta_1(t)\cap\delta_2(s)}W^q\right)^{1/q}
\left(\int_{\Delta_1(t)}v_1^{p_1'}\right)^{1/p_1'}
\left(\int_{\Delta_2(s)}v_2^{p_2'}\right)^{1/p_2'}.
\end{equation}
In Region $\mathrm{II}_1$, where $1/r_2=1/q-1/p_2$, set
\begin{align}
D_{\mathrm{II}_1}[W]
={}&\sup_{t>0}
\left(\int_{\Delta_1(t)}v_1^{p_1'}\right)^{1/p_1'} \notag\\
&\times\left[\int_{\delta_1(t)}
\left(\int_{\delta_1(t)\cap\delta_2(s)}W^q\right)^{r_2/p_2}
\left(\int_{\Delta_2(s)}v_2^{p_2'}\right)^{r_2/p_2'}
W^q(s)\,ds\right]^{1/r_2}.
\label{eq:DII1}
\end{align}
In Region $\mathrm{II}_2$, where $1/r_1=1/q-1/p_1$, we use
\begin{align}
D_{\mathrm{II}_2}^{\mathrm{corr}}[W]
={}&\sup_{s>0}
\left(\int_{\Delta_2(s)}v_2^{p_2'}\right)^{1/p_2'} \notag\\
&\times\left[\int_{\delta_2(s)}
\left(\int_{\delta_1(t)\cap\delta_2(s)}W^q\right)^{r_1/p_1}
\left(\int_{\Delta_1(t)}v_1^{p_1'}\right)^{r_1/p_1'}
W^q(t)\,dt\right]^{1/r_1}.
\label{eq:DII2}
\end{align}
The symmetry-consistent form \eqref{eq:DII2} contains $\Delta_2(s)$ in the
leading $v_2$-factor.  In the displayed Region-$\mathrm{II}_2$ formula of
\cite{JainKanjilalStepanovUshakova2018} that factor is printed with
$\Delta_1(s)$.  Exact interchange of the indices $1\leftrightarrow2$ in
\eqref{eq:DII1} gives \eqref{eq:DII2}. This is the form used throughout the
present paper.

Thus, with
\[
\D_I(W)=D_I[W],\quad
\D_{\mathrm{II}_1}(W)=D_{\mathrm{II}_1}[W],\quad
\D_{\mathrm{II}_2}(W)=D_{\mathrm{II}_2}^{\mathrm{corr}}[W],
\]
we have
\begin{equation}\label{eq:upperbounded}
 \|\B_W\|\asymp \D_{\mathfrak R}(W),
 \qquad \mathfrak R\in\{I,\mathrm{II}_1,\mathrm{II}_2\}.
\end{equation}

\subsection{Subcritical kernels and the central decomposition}
Define
\[
 k_{\ell,W}(y,t)=\int_{b_2^{-1}(t)}^{a_1^{-1}(y)}W(x)^q\,dx,
 \qquad Y_\ell(t)<y<a(t),
\]
\[
 k_{r,W}(t,y)=\int_{b_1^{-1}(y)}^{a_2^{-1}(t)}W(x)^q\,dx,
 \qquad b(t)<y<Y_r(t),
\]
At closed admissible boundary points, the same integral formulas define the
kernels, with value $0$ for a degenerate interval.  Define also
\[
 m_{1,W}(y)=\int_{\Delta_1^{-1}(y)}W^q,
 \qquad
 m_{2,W}(t)=\int_{\Delta_2^{-1}(t)}W^q.
\]
Let
\[
 \Omega=\{t>0:a(t)<b(t)\}=\bigsqcup_{\nu\in\mathcal N}J_\nu.
\]
The component family is at most countable.  On each connected component
$J_\nu$, the continuous function $y_\ell-y_r$ has constant sign. If it
vanished at an interior point of $J_\nu$, then $y_\ell=y_r$, hence $a=b$,
contradicting $J_\nu\subset\Omega$.  We call $J_\nu$ \emph{left type} if
$y_\ell(t)<y_r(t)$ for $t\in J_\nu$, and \emph{right type} if
$y_r(t)<y_\ell(t)$ for $t\in J_\nu$.  Thus it is enough to check the sign
at one point of the component.  On a left central component put
\[
 U_{\nu,W}=\mathcal V_2^{q/r_2},\qquad
 V_{\nu,W}=\mathcal V_1^{q/r_1}m_{1,W};
\]
then
\[
 U_{\nu,W}^Q=\mathcal V_2,\qquad
 V_{\nu,W}^{P'}=\mathcal V_1m_{1,W}^{P'}.
\]
On a right central component put
\[
 U_{\nu,W}=\mathcal V_2^{q/r_2}m_{2,W},\qquad
 V_{\nu,W}=\mathcal V_1^{q/r_1},
\]
so that
\[
 U_{\nu,W}^Q=\mathcal V_2m_{2,W}^Q,\qquad
 V_{\nu,W}^{P'}=\mathcal V_1.
\]

\subsection{Region $\mathrm{III}_1$}\label{subsec:III1-char}
Define
\begin{align}
A_{\ell,0}[W]
={}&\sup_{s>0}\sup_{s\le t\le Y_\ell^{-1}(a(s))}
\left(\int_s^t k_{\ell,W}(Y_\ell(t),x)^{r_2/q}\mathcal V_2(x)\,dx\right)^{q/r_2}
\left(\int_{Y_\ell(t)}^{a(s)}\mathcal V_1(y)\,dy\right)^{q/r_1},
\label{eq:Al0}\\
A_{\ell,1}[W]
={}&\sup_{s>0}\sup_{s\le t\le Y_\ell^{-1}(a(s))}
\left(\int_s^t\mathcal V_2(x)\,dx\right)^{q/r_2}
\left(\int_{Y_\ell(t)}^{a(s)}k_{\ell,W}(y,t)^{r_1/q}\mathcal V_1(y)\,dy\right)^{q/r_1}.
\label{eq:Al1}
\end{align}
Put $D_{\ell,\mathrm{III}_1}[W]=A_{\ell,0}[W]+A_{\ell,1}[W]$.  Similarly,
\begin{align}
A_{r,0}[W]
={}&\sup_{t>0}\sup_{Y_r^{-1}(b(t))\le s\le t}
\left(\int_s^t k_{r,W}(x,Y_r(s))^{r_2/q}\mathcal V_2(x)\,dx\right)^{q/r_2}
\left(\int_{b(t)}^{Y_r(s)}\mathcal V_1(y)\,dy\right)^{q/r_1},
\label{eq:Ar0}\\
A_{r,1}[W]
={}&\sup_{t>0}\sup_{Y_r^{-1}(b(t))\le s\le t}
\left(\int_s^t\mathcal V_2(x)\,dx\right)^{q/r_2}
\left(\int_{b(t)}^{Y_r(s)}k_{r,W}(s,y)^{r_1/q}\mathcal V_1(y)\,dy\right)^{q/r_1}.
\label{eq:Ar1}
\end{align}
Put $D_{r,\mathrm{III}_1}[W]=A_{r,0}[W]+A_{r,1}[W]$.
For each central component let
\begin{equation}\label{eq:A-nu}
\mathfrak A_\nu[W]=
\sup_{\substack{s,t\in J_\nu\\s\le t,\ a(t)\le b(s)}}
\left(\int_s^tU_{\nu,W}(x)^Q\,dx\right)^{1/Q}
\left(\int_{a(t)}^{b(s)}V_{\nu,W}(y)^{P'}\,dy\right)^{1/P'}
\end{equation}
and
\[
 \Adeg[W]=\sup_{\nu\in\mathcal N}\mathfrak A_\nu[W],
\]
with value $0$ when $\Omega=\varnothing$.  Finally,
\begin{equation}\label{eq:DIII1}
\D_{\mathrm{III}_1}(W)=
D_{\ell,\mathrm{III}_1}[W]^{1/q}
+\Adeg[W]^{1/q}
+D_{r,\mathrm{III}_1}[W]^{1/q}.
\end{equation}
The Region-$\mathrm{III}_1$ boundedness criterion is
\begin{equation}\label{eq:III1bounded}
 \|\B_W\|\asymp \D_{\mathrm{III}_1}(W).
\end{equation}
The equality boundary $P=Q$ is included.

\subsection{Region $\mathrm{III}_2$}
When $P>Q$, the same left, central, and right pieces are governed by
$R$-summable block families.  Their complete definitions are given in
Appendix~\ref{app:III2}.  We write
\begin{equation}\label{eq:DIII2}
\D_{\mathrm{III}_2}(W)=
D_{\ell,\mathrm{III}_2}[W]^{1/q}
+\Bdeg[W]^{1/q}
+\bigl(D_{r,\mathrm{III}_2}^{\mathrm{corr}}[W]\bigr)^{1/q}.
\end{equation}
The boundedness criterion is
\begin{equation}\label{eq:III2bounded}
 \|\B_W\|\asymp \D_{\mathrm{III}_2}(W).
\end{equation}
In particular, the right discrete blocks are always used with the kernel
orientations
\begin{equation}\label{eq:right-orientations}
 k_r(x,Y_r(\zeta_k)),\qquad k_r(\zeta_k,y),\qquad
 k_r(x,t),\qquad k_r(t,y).
\end{equation}
A source-interface comment on \eqref{eq:right-orientations} appears in
Appendix~\ref{app:interface}.

\begin{proposition}[Uniform cutoff transfer]\label{prop:tail-transfer}
For every one of the five regions there exists $C_{\mathfrak R}\ge1$, depending
only on the exponent and structural parameters, such that for every
$0<c<d<\infty$,
\[
 C_{\mathfrak R}^{-1}\D_{\mathfrak R}(w_{c,d})
 \le \|\B_{w_{c,d}}\|
 \le C_{\mathfrak R}\D_{\mathfrak R}(w_{c,d}).
\]
The constant is independent of $c,d$, the component and block indices, the
number of components, and the discretisation normalisation.
\end{proposition}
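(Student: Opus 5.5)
The plan is to observe that Proposition~\ref{prop:tail-transfer} is, region by region, the boundedness criteria \eqref{eq:upperbounded}, \eqref{eq:III1bounded}, \eqref{eq:III2bounded} applied to the single weight $W=w_{c,d}$. The real content is to check that these criteria hold with constants that do not depend on $W$. In particular they must not depend on the cutoff $c,d$, on the component decomposition of $\Omega$, or on the block discretisation.

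The first step is to note that the admissible class of output weights in those criteria is all nonnegative measurable $W$, finite a.e. Each $w_{c,d}=w\chi_{E_{c,d}}$ belongs to this class. The input data $(a_i,b_i,v_i,p_i,q)$ do not change under the cutoff, and neither do the fairways $\sigma_i$, the windows $\Delta_i,\delta_i$, the profiles $\mathcal V_i$, the derived boundaries $Y_\ell,Y_r,a,b$, or the set $\Omega$ with its components $J_\nu$. Only $W$ enters, through $W^q$, in the kernels $k_{\ell,W},k_{r,W}$, the masses $m_{i,W}$ and the central densities $U_{\nu,W},V_{\nu,W}$. So the structural parameters are literally the same for every $c,d$.

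The second step is to go through the source proofs, as recorded in Appendix~\ref{app:interface}, and track their constants. For Regions $I,\mathrm{II}_1,\mathrm{II}_2$, the two-sided estimates of \cite{JainKanjilalStepanovUshakova2018} come from the same pattern each time. The upper bound uses a fairway decomposition into the pieces $\delta_1(t)\cap\delta_2(s)$, then H\"older's inequality, then linear Hardy--Steklov bounds. The lower bound uses test functions $f_i=v_i^{p_i'-1}\chi_{\Delta_i(t)}$ restricted to fairway pieces. Every constant produced this way depends only on $p_1,p_2,q$, so uniformity is immediate.

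For Regions $\mathrm{III}_1$ and $\mathrm{III}_2$ the argument splits $\B_W$ into left, central and right parts, using the curves $a(t),b(t)$. The left and right parts are reduced to Oinarov-kernel estimates for $k_{\ell,W}$ and $k_{r,W}$, whose kernel constants are universal. The central parts are handled component by component on each $J_\nu$. The components sit over disjoint output sets, so the $L^q$ quasi-norms combine by $q$-additivity. This means the central characteristic is a supremum over $\nu$, namely $\Adeg$, or an $\ell^R$-sum of block terms, namely $\Bdeg$, with no constant depending on the number of components.

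The main obstacle is the central and discretised parts in the quasi-Banach and subcritical settings. One must check three things there: that the block discretisations in Appendix~\ref{app:III2} use a fixed geometric ratio, say $2$, so that the discretisation constants are absolute; that the measure-pullback extension for merely continuous boundaries introduces no $W$-dependent constant; and that the decompositions produce only finitely many pieces (left, central, right), combined by the $\theta$-triangle inequality. I would try first to prove one abstract lemma. It would state that if $\|\B_W\|\le C\D_{\mathfrak R}(W)$ and $\D_{\mathfrak R}(W)\le C\|\B_W\|$ are proved by a chain of inequalities in which $W$ appears only through $W^q$-integrals over windows fixed by the geometry, then $C$ is independent of $W$. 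Applying this lemma to $W=w_{c,d}$ then gives the stated $C_{\mathfrak R}$ uniformly in $c,d$.
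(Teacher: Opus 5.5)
Your proposal is correct and follows the paper's proof. The proposition is just \eqref{eq:upperbounded}, \eqref{eq:III1bounded}, \eqref{eq:III2bounded} applied to $W=w_{c,d}$: the cutoff leaves the endpoint maps, fairways, $\mathcal V_i$, the components $J_\nu$ and the discretising sequences unchanged, so the $W$-independent source constants carry over, and the pullback of Appendix~\ref{app:interface} keeps them uniform for merely continuous boundaries.

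Two of your side remarks are inaccurate, though neither is needed:
\begin{enumerate}
\item The blocks of Appendix~\ref{app:III2} are generated by boundary iterates such as $\xi_{k+1}=Y_\ell^{-1}(a(\xi_k))$ and $\lambda_{\nu,k}=T_\nu^k(\lambda_{\nu,0})$, not by a geometric ratio.
\item Central output sets of different components need not be disjoint. Two components with a common endpoint $\beta$ both contain the window $\Delta_2^{-1}(\beta)$ in their output ranges. Uniformity over $\nu$ therefore comes from the componentwise block-diagonal aggregation (Lemma~3.1 of the Stepanov--Ushakova source, as invoked in Appendix~\ref{app:interface}), not from $q$-additivity of outputs.
\end{enumerate}
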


\begin{proof}
Apply the boundedness criteria above with $W=w_{c,d}$.  Multiplication by an
indicator does not alter the endpoint maps, fairways, transformed densities,
or the discretising sequences.  The constants in the source inequalities
therefore remain the same.  For the derived continuous boundaries in the
subcritical pieces, the Stieltjes form described in Appendix~\ref{app:interface}
keeps the constants uniform.
\end{proof}

\section{Upper and mixed regions}\label{sec:uppermixed}

For $\mathfrak R\in\{I,\mathrm{II}_1,\mathrm{II}_2\}$ define the one-sided
characteristic defects
\[
 D_{\mathfrak R}^{0}(c)=\D_{\mathfrak R}(w\chi_{(0,c)}),\qquad
 D_{\mathfrak R}^{\infty}(d)=\D_{\mathfrak R}(w\chi_{(d,\infty)}),
\]
and
\begin{equation}\label{eq:upperdefects}
 \Lambda_{\mathfrak R}^{0}=\lim_{c\downarrow0}D_{\mathfrak R}^{0}(c),
 \qquad
 \Lambda_{\mathfrak R}^{\infty}=\lim_{d\uparrow\infty}D_{\mathfrak R}^{\infty}(d).
\end{equation}
The limits exist in $[0,\infty]$ by monotonicity.  Also let
\[
 \Lambda_{\mathfrak R}=\inf_{0<c<d<\infty}\D_{\mathfrak R}(w_{c,d}).
\]

\begin{lemma}[No-window bridge]\label{lem:nowindowbridge}
For $i=1,2$ write
\[
 \delta_i(t)=(\ell_i(t),u_i(t)),\qquad
 \ell_i(t)=b_i^{-1}(\sigma_i(t)),\quad
 u_i(t)=a_i^{-1}(\sigma_i(t)).
\]
Then $\ell_i,u_i$ are continuous strictly increasing bijections of
$(0,\infty)$ onto itself and $\ell_i(t)<t<u_i(t)$.  If
\[
 \Psi_i(c)=u_i\!\left(\ell_i^{-1}(c)\right),
\]
then
\[
 d\ge\Psi_i(c)
 \quad\Longrightarrow\quad
 \delta_i(t)\text{ cannot meet both }(0,c)\text{ and }(d,\infty)
\]
for any $t>0$.  The family of pairs $(c,d)$ with $d\ge\Psi_i(c)$ is
cofinal as $c\downarrow0$, $d\uparrow\infty$.
\end{lemma}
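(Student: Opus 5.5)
The argument rests on monotonicity and continuity of the composite maps, using the properties of $\sigma_i$ established in Section~\ref{sec:prelim}.

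\emph{Bijections.} First I would check that $a_i,b_i$ are continuous strictly increasing bijections of $(0,\infty)$ onto itself. They are differentiable and strictly increasing, with $a_i(0)=b_i(0)=0$ and limit $\infty$ at infinity. By the intermediate value theorem they map $(0,\infty)$ onto $(0,\infty)$, so $a_i^{-1},b_i^{-1}$ are continuous strictly increasing bijections. Composing with the bijection $\sigma_i$ shows that $\ell_i=b_i^{-1}\circ\sigma_i$ and $u_i=a_i^{-1}\circ\sigma_i$ are bijections of the same type.

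\emph{Ordering.} Since $a_i(t)<\sigma_i(t)<b_i(t)$ by the fairway construction, applying the increasing map $b_i^{-1}$ to $\sigma_i(t)<b_i(t)$ gives $\ell_i(t)<t$. Applying $a_i^{-1}$ to $a_i(t)<\sigma_i(t)$ gives $t<u_i(t)$.

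\emph{No-window property.} Fix $c$ and let $d\ge\Psi_i(c)$. Suppose $\delta_i(t)=(\ell_i(t),u_i(t))$ meets $(0,c)$. Then $\ell_i(t)<c$, so $t<\ell_i^{-1}(c)$ by monotonicity of $\ell_i^{-1}$. Hence $u_i(t)<u_i(\ell_i^{-1}(c))=\Psi_i(c)\le d$, and $\delta_i(t)$ does not meet $(d,\infty)$. Meeting $(d,\infty)$ requires $u_i(t)>d$, which is excluded. The inequality here is strict, so the boundary case $d=\Psi_i(c)$ is also covered.

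\emph{Cofinality.} $\Psi_i$ is continuous, strictly increasing and finite, and $\Psi_i(c)>\ell_i^{-1}(c)>c$. Given any $c_0>0$ and $d_0<\infty$, I would take $c\le c_0$ and $d=\max\{d_0,\Psi_i(c)\}$. Then $d\ge\Psi_i(c)$ and $d>c$, so the pair is admissible with $c<d$. Thus the admissible pairs are cofinal in the directed set $c\downarrow0$, $d\uparrow\infty$.

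The only step needing care is the ordering $\ell_i<\mathrm{id}<u_i$, which relies on the strict inequalities $a_i<\sigma_i<b_i$. These follow because $F_i$ is strictly increasing and $F_i(a_i(x))<F_i(b_i(x))$. The rest is routine.
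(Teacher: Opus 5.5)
Your proof is correct and matches the paper's. The no-window step $\ell_i(t)<c\Rightarrow t<\ell_i^{-1}(c)\Rightarrow u_i(t)<\Psi_i(c)\le d$ and the cofinality choice $d\ge\max\{d_0,\Psi_i(c)\}$ are exactly the paper's argument, and you additionally verify the bijection and ordering claims $\ell_i<\mathrm{id}<u_i$ and the admissibility $\Psi_i(c)>c$, which the paper leaves implicit.
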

\begin{proof}
If $\delta_i(t)$ meets $(0,c)$, then $\ell_i(t)<c$, hence
$t<\ell_i^{-1}(c)$.  Monotonicity gives
\[
 u_i(t)<u_i(\ell_i^{-1}(c))=\Psi_i(c)\le d,
\]
so the same window cannot meet $(d,\infty)$.  Cofinality follows by first
choosing $c$ below any prescribed lower cutoff and then choosing
$d\ge\max\{d_0,\Psi_i(c)\}$.
\end{proof}

\begin{lemma}[Two-tail separation]\label{lem:twotail}
For $\mathfrak R\in\{I,\mathrm{II}_1,\mathrm{II}_2\}$,
\begin{equation}\label{eq:twotail}
 \Lambda_{\mathfrak R}
 =\max\{\Lambda_{\mathfrak R}^{0},\Lambda_{\mathfrak R}^{\infty}\}.
\end{equation}
\end{lemma}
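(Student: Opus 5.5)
We prove the two inequalities separately. The lower bound $\Lambda_{\mathfrak R}\ge\max\{\Lambda^0_{\mathfrak R},\Lambda^\infty_{\mathfrak R}\}$ is pure monotonicity. Each characteristic $\D_{\mathfrak R}(W)$ in \eqref{eq:DI}--\eqref{eq:DII2} is nondecreasing in $W$, since $W$ enters only through integrals of $W^q$ raised to positive powers. Because $w\chi_{(0,c)}\le w_{c,d}$, we get $D^0_{\mathfrak R}(c)\le\D_{\mathfrak R}(w_{c,d})$ for every $d>c$. As $D^0_{\mathfrak R}$ is nondecreasing in $c$, this gives $\Lambda^0_{\mathfrak R}\le D^0_{\mathfrak R}(c)\le\D_{\mathfrak R}(w_{c,d})$. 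Infimising over $(c,d)$ gives $\Lambda^0_{\mathfrak R}\le\Lambda_{\mathfrak R}$, and the argument for $\Lambda^\infty_{\mathfrak R}$ is symmetric.

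For the upper bound we restrict to the cofinal family of Lemma~\ref{lem:nowindowbridge}. Since $\D_{\mathfrak R}(w_{c,d})$ decreases as $c\downarrow0$ and $d\uparrow\infty$, its infimum may be taken over pairs with $d\ge\max\{\Psi_1(c),\Psi_2(c)\}$. For such pairs we aim to prove the exact pointwise identity
\[
 \D_{\mathfrak R}(w_{c,d})=\max\{\D_{\mathfrak R}(w\chi_{(0,c)}),\D_{\mathfrak R}(w\chi_{(d,\infty)})\}.
\]
Passing to the limit along the family then gives $\Lambda_{\mathfrak R}\le\max\{\Lambda^0_{\mathfrak R},\Lambda^\infty_{\mathfrak R}\}$.

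The key observation behind this identity is how $W$ enters the characteristics. In every case $W$ is integrated only over sets of the form $\delta_1(t)\cap\delta_2(s)$, or over $\delta_1(t)$ in the outer integral of \eqref{eq:DII1}, or over $\delta_2(s)$ in \eqref{eq:DII2}. Take $W=w_{c,d}=w\chi_{(0,c)}+w\chi_{(d,\infty)}$ and fix the outer supremum variable ($t$ in \eqref{eq:DII1}, $s$ in \eqref{eq:DII2}, or the pair $(t,s)$ in \eqref{eq:DI}). By the bridge lemma, the relevant window $\delta_i$ meets at most one of $(0,c)$ and $(d,\infty)$. In Region $I$ this applies to $\delta_1(t)$, and therefore to $\delta_1(t)\cap\delta_2(s)$. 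On that window $W$ coincides with exactly one of the two one-sided weights, or vanishes there.

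In Region $\mathrm{II}_1$ we check that this extends to the whole bracket. The outer integrand $W^q(s)$ is supported in $\delta_1(t)$, and the inner integrals run over $\delta_1(t)\cap\delta_2(s)\subset\delta_1(t)$. So the entire bracketed expression for $W$ equals the same expression for whichever one-sided weight $\delta_1(t)$ sees. The term indexed by $t$ is therefore bounded by the corresponding term for $w\chi_{(0,c)}$ or for $w\chi_{(d,\infty)}$. Taking the supremum over $t$ gives "$\le$" in the displayed identity, and monotonicity gives "$\ge$". Region $\mathrm{II}_2$ is handled identically with the indices interchanged, using $\Psi_2$.

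The main point needing care is this bookkeeping: one must verify that no factor of the characteristic integrates $W^q$ outside the single window controlled by the bridge lemma. In particular, the $W^q(s)\,ds$ outer measure must lie inside $\delta_1(t)$, which holds because its domain of integration is $\delta_1(t)$. That is why both $\Psi_1$ and $\Psi_2$ are imposed, so that the same pairs $(c,d)$ serve all three regions simultaneously.
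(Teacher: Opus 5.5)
Your proposal is correct and follows the paper's proof. The lower bound comes from monotonicity. For the upper bound you restrict to the cofinal no-window-bridge family of Lemma~\ref{lem:nowindowbridge}. On that family every profile sees at most one tail, because all occurrences of $W$ lie in the single enclosing window: $\delta_1(t)$ in Regions $I$ and $\mathrm{II}_1$, and $\delta_2(s)$ in Region $\mathrm{II}_2$. This gives the exact identity $\D_{\mathfrak R}(w_{c,d})=\max\{D^0_{\mathfrak R}(c),D^\infty_{\mathfrak R}(d)\}$ there, before passing to the directed limit.

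The only cosmetic difference is that you impose $d\ge\max\{\Psi_1(c),\Psi_2(c)\}$ for all three regions at once, whereas the paper uses $\Psi_1$ or $\Psi_2$ region by region. Either choice is cofinal and works.
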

\begin{proof}
The lower bound follows from monotonicity because
$w\chi_{(0,c)}\le w_{c,d}$ and $w\chi_{(d,\infty)}\le w_{c,d}$.
For the reverse inequality we work on the cofinal no-window-bridge family.

In Region $I$, every output-weight occurrence in a fixed $(t,s)$ profile is
contained in $\delta_1(t)\cap\delta_2(s)\subset\delta_1(t)$.  Thus
Lemma~\ref{lem:nowindowbridge}, with $i=1$, makes that profile see at most one
of the two tails.  Taking the supremum therefore gives
\[
 D_I[w_{c,d}]=\max\{D_I^0(c),D_I^\infty(d)\}
 \qquad(d\ge\Psi_1(c)).
\]

In Region $\mathrm{II}_1$, both the outer variable and every inner
output-weight integral in \eqref{eq:DII1} lie in the single enclosing window
$\delta_1(t)$.  Hence, on the same cofinal family,
\[
 D_{\mathrm{II}_1}[w_{c,d}]
 =\max\{D_{\mathrm{II}_1}^0(c),D_{\mathrm{II}_1}^\infty(d)\}
 \qquad(d\ge\Psi_1(c)).
\]
In Region $\mathrm{II}_2$, the enclosing window is instead $\delta_2(s)$,
so Lemma~\ref{lem:nowindowbridge} with $i=2$ gives
\[
 D_{\mathrm{II}_2}^{\mathrm{corr}}[w_{c,d}]
 =\max\{D_{\mathrm{II}_2}^0(c),D_{\mathrm{II}_2}^\infty(d)\}
 \qquad(d\ge\Psi_2(c)).
\]
Since each of these no-bridge families is cofinal, taking the directed
infimum proves \eqref{eq:twotail}.  Thus the maximum is an exact
characteristic-level identity, not an operator-norm identity.
\end{proof}

\begin{theorem}[Upper and mixed compactness]\label{thm:uppercompact}
Let $\mathfrak R\in\{I,\mathrm{II}_1,\mathrm{II}_2\}$.  Then
\[
 \B_w\text{ is compact}
 \quad\Longleftrightarrow\quad
 \D_{\mathfrak R}(w)<\infty,
 \quad \Lambda_{\mathfrak R}^{0}=\Lambda_{\mathfrak R}^{\infty}=0.
\]
If $\B_w$ is bounded, then
\begin{equation}\label{eq:upper-essential}
 \|\B_w\|_{\mathrm e}
 \asymp \max\{\Lambda_{\mathfrak R}^{0},\Lambda_{\mathfrak R}^{\infty}\}.
\end{equation}
\end{theorem}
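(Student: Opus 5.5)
The plan is to combine three earlier results: the exact identity $\|\B_w\|_{\mathrm e}=\tau(\B_w)$ from Theorem~\ref{thm:exactdistance}, the uniform cutoff transfer of Proposition~\ref{prop:tail-transfer}, and the characteristic-level splitting of Lemma~\ref{lem:twotail}.

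First assume $\B_w$ is bounded. By \eqref{eq:gamma-tau}, $\tau(\B_w)=\inf_{c<d}\|\B_{w_{c,d}}\|$. Proposition~\ref{prop:tail-transfer} gives
\[
C_{\mathfrak R}^{-1}\D_{\mathfrak R}(w_{c,d})\le\|\B_{w_{c,d}}\|\le C_{\mathfrak R}\D_{\mathfrak R}(w_{c,d})
\]
for every pair $c<d$, with the same constant throughout. Taking the infimum over $c<d$ therefore gives
\[
C_{\mathfrak R}^{-1}\Lambda_{\mathfrak R}\le\tau(\B_w)\le C_{\mathfrak R}\Lambda_{\mathfrak R}.
\]
Lemma~\ref{lem:twotail} replaces $\Lambda_{\mathfrak R}$ by $\max\{\Lambda^0_{\mathfrak R},\Lambda^\infty_{\mathfrak R}\}$. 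Combined with Theorem~\ref{thm:exactdistance}, this proves \eqref{eq:upper-essential} with comparison constant $C_{\mathfrak R}$.

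For the equivalence, suppose $\B_w$ is compact. Then it is bounded, so $\D_{\mathfrak R}(w)<\infty$ by \eqref{eq:upperbounded}. Theorem~\ref{thm:tailcompact} gives $\tau(\B_w)=0$, and the two-sided estimate just obtained forces both defects to vanish. Conversely, $\D_{\mathfrak R}(w)<\infty$ gives boundedness through \eqref{eq:upperbounded}. Vanishing of both defects then gives $\Lambda_{\mathfrak R}=0$, hence $\tau(\B_w)=0$, and Theorem~\ref{thm:tailcompact} yields compactness.

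The only step needing care is the role of $+\infty$. The defects $D^0_{\mathfrak R}(c)$ and $D^\infty_{\mathfrak R}(d)$ are dominated by $\D_{\mathfrak R}(w)$ because the characteristics are monotone in $W$. So boundedness makes every quantity finite, and the directed limits in \eqref{eq:upperdefects} commute with the infimum used in Lemma~\ref{lem:twotail}. I also need that the cutoff constants do not depend on $c,d$, since otherwise the infimum could not be passed through the two-sided comparison. This is exactly the uniformity asserted in Proposition~\ref{prop:tail-transfer}, and I expect it to be the main point to check.
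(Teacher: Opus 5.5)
Your proposal is correct and follows the paper's own proof. The paper simply cites Proposition~\ref{prop:tail-transfer}, Lemma~\ref{lem:twotail}, and Theorems~\ref{thm:tailcompact}--\ref{thm:exactdistance}, and you make explicit the step of passing the cutoff-uniform comparison through the infimum over $(c,d)$ to get $C_{\mathfrak R}^{-1}\Lambda_{\mathfrak R}\le\tau(\B_w)\le C_{\mathfrak R}\Lambda_{\mathfrak R}$. As in the paper, the only non-exact step is the regional equivalence constant, since $\|\B_w\|_{\mathrm e}=\tau(\B_w)$ and $\Lambda_{\mathfrak R}=\max\{\Lambda_{\mathfrak R}^{0},\Lambda_{\mathfrak R}^{\infty}\}$ both hold with equality.
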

\begin{proof}
Proposition~\ref{prop:tail-transfer}, Lemma~\ref{lem:twotail}, and
Theorems~\ref{thm:tailcompact}--\ref{thm:exactdistance} give the result.  The
identity \eqref{eq:twotail} is exact, while \eqref{eq:upper-essential} is only a
norm equivalence because the regional characteristics are equivalent, rather
than identical, to the operator norm.
\end{proof}

\section{The subcritical supremal region $\mathrm{III}_1$}\label{sec:III1}

The decomposition \eqref{eq:DIII1} contains a left Oinarov piece, a central
component piece, and a right Oinarov piece.  The compactness theorem is most
transparent when the one-sided output cutoffs are retained as the primary
defects.  Define
\begin{align*}
\mathbf D_{\mathrm{III}_1}^{0}(c)
={}&D_{\ell,\mathrm{III}_1}[w\chi_{(0,c)}]^{1/q}
+\Adeg[w\chi_{(0,c)}]^{1/q}
+D_{r,\mathrm{III}_1}[w\chi_{(0,c)}]^{1/q},\\
\mathbf D_{\mathrm{III}_1}^{\infty}(d)
={}&D_{\ell,\mathrm{III}_1}[w\chi_{(d,\infty)}]^{1/q}
+\Adeg[w\chi_{(d,\infty)}]^{1/q}
+D_{r,\mathrm{III}_1}[w\chi_{(d,\infty)}]^{1/q}.
\end{align*}
Set
\begin{equation}\label{eq:III1defects}
 \Lambda_{\mathrm{III}_1}^{0}=\lim_{c\downarrow0}\mathbf D_{\mathrm{III}_1}^{0}(c),
 \qquad
 \Lambda_{\mathrm{III}_1}^{\infty}=\lim_{d\uparrow\infty}\mathbf D_{\mathrm{III}_1}^{\infty}(d),
\end{equation}
and
\[
 \Lambda_{\mathrm{III}_1}(w)=
 \inf_{0<c<d<\infty}\D_{\mathrm{III}_1}(w_{c,d}).
\]
For the bridge proof we use the explicit shorthand
\begin{align*}
 D_\ell^0(c)&=D_{\ell,\mathrm{III}_1}[w\chi_{(0,c)}],&
 D_\ell^\infty(d)&=D_{\ell,\mathrm{III}_1}[w\chi_{(d,\infty)}],\\
 D_r^0(c)&=D_{r,\mathrm{III}_1}[w\chi_{(0,c)}],&
 D_r^\infty(d)&=D_{r,\mathrm{III}_1}[w\chi_{(d,\infty)}].
\end{align*}
Thus no bridge symbol below is implicit.

\begin{theorem}[Profile bridge and two-tail separation]\label{thm:III1bridge}
Let $C_q=2^{\max\{1/q-1,0\}}$.  Then
\begin{equation}\label{eq:III1separation}
\max\{\Lambda_{\mathrm{III}_1}^{0},\Lambda_{\mathrm{III}_1}^{\infty}\}
\le \Lambda_{\mathrm{III}_1}(w)
\le C_q\bigl(\Lambda_{\mathrm{III}_1}^{0}+\Lambda_{\mathrm{III}_1}^{\infty}\bigr)
\le 2C_q\max\{\Lambda_{\mathrm{III}_1}^{0},\Lambda_{\mathrm{III}_1}^{\infty}\}.
\end{equation}
\end{theorem}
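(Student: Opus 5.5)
The plan has three parts: the left inequality by monotonicity, the middle inequality by a profile-by-profile bridge estimate on a cofinal family of cutoffs, and the right inequality by elementary algebra.

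\emph{Left inequality.} Every ingredient of $\D_{\mathrm{III}_1}(W)$ is monotone in $W$. This holds for the kernels $k_{\ell,W},k_{r,W}$, for $m_{i,W}$, and hence for $A_{\ell,j},A_{r,j}$ and $\mathfrak A_\nu$. The transformed densities $\mathcal V_i$ do not depend on $W$ at all. Since $w\chi_{(0,c)}\le w_{c,d}$ and $w\chi_{(d,\infty)}\le w_{c,d}$, we get
\[
\mathbf D^{0}_{\mathrm{III}_1}(c)\le\D_{\mathrm{III}_1}(w_{c,d})
\quad\text{and}\quad
\mathbf D^{\infty}_{\mathrm{III}_1}(d)\le\D_{\mathrm{III}_1}(w_{c,d}).
\]
Now use that $c\mapsto\mathbf D^{0}_{\mathrm{III}_1}(c)$ is nonincreasing as $c\downarrow0$ and that $d\mapsto\mathbf D^{\infty}_{\mathrm{III}_1}(d)$ is nonincreasing as $d\uparrow\infty$. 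Taking the directed infimum over $(c,d)$ gives the first inequality.

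\emph{Right inequality.} This is trivial: $x+y\le2\max\{x,y\}$.

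\emph{Middle inequality: reduction.} We bound $\D_{\mathrm{III}_1}(w_{c,d})$ for $(c,d)$ in a cofinal family. Then $\Lambda_{\mathrm{III}_1}(w)$ is at most the limit along that family. The key point is that we do not need a no-bridge geometry, unlike Lemma~\ref{lem:twotail}. We only need a pointwise splitting of each profile. Write $w_{c,d}^q=w^q\chi_{(0,c)}+w^q\chi_{(d,\infty)}$. Then every kernel splits additively:
\[
k_{\ell,w_{c,d}}=k_{\ell,w\chi_{(0,c)}}+k_{\ell,w\chi_{(d,\infty)}},
\]
and similarly for $k_r$, $m_1$ and $m_2$. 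In each profile \eqref{eq:Al0}--\eqref{eq:Ar1} and \eqref{eq:A-nu}, the $W$-dependence enters through exactly one factor. That factor has one of the forms
\[
\Bigl(\int k^{r_2/q}\mathcal V_2\Bigr)^{q/r_2},\qquad
\Bigl(\int k^{r_1/q}\mathcal V_1\Bigr)^{q/r_1},\qquad
\Bigl(\int V^{P'}\Bigr)^{1/P'},\qquad
\Bigl(\int U^{Q}\Bigr)^{1/Q}.
\]
These are (quasi-)norms of the $W$-dependent function in $L^{\rho}$ of a measure. The exponents here are $\rho=r_i/q$, $P'$ or $Q$, and all of them are at least $1$, since $r_i>q$, $P'>1$ and $Q>1$. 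Minkowski's inequality therefore makes each profile subadditive in the splitting. Taking suprema gives
\[
A_{\ell,j}[w_{c,d}]\le A_{\ell,j}[w\chi_{(0,c)}]+A_{\ell,j}[w\chi_{(d,\infty)}],
\]
and the analogous bounds for $A_{r,j}$ and for $\Adeg$. The central pieces need no component-crossing argument, because the supremum over $\nu$ is taken after the subadditive profile bound. Hence
\[
D_{\ell}[w_{c,d}]\le D_\ell^0(c)+D_\ell^\infty(d),\qquad
D_{r}[w_{c,d}]\le D_r^0(c)+D_r^\infty(d),
\]
and likewise for $\Adeg$. This bound holds for every $(c,d)$.

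\emph{Middle inequality: conversion to $1/q$ powers.} Apply $(x+y)^{1/q}\le C_q(x^{1/q}+y^{1/q})$, which holds with $C_q=2^{\max\{1/q-1,0\}}$. Summing the three pieces and regrouping gives
\[
\D_{\mathrm{III}_1}(w_{c,d})\le C_q\bigl(\mathbf D^{0}_{\mathrm{III}_1}(c)+\mathbf D^{\infty}_{\mathrm{III}_1}(d)\bigr).
\]
Letting $c\downarrow0$ and $d\uparrow\infty$ yields the middle inequality. This also covers the case where some defect is infinite.

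\emph{Expected obstacle.} The main difficulty is checking that each profile is genuinely a Minkowski-type functional of the split quantity with exponent at least $1$. This matters for $q<1$ and for the $q'$-conventions hidden in $\mathcal V_i$. It matters especially in the central terms on left-type components, where $V_{\nu,W}=\mathcal V_1^{q/r_1}m_{1,W}$ is linear in $m_{1,W}$, so the $L^{P'}$ Minkowski inequality applies. On right-type components the corresponding $L^{Q}$ estimate applies to $U_{\nu,W}$. One must also check that the outer exponents $q/r_i$ and $1/P'$, together with the final $1/q$, produce at most the stated constant $C_q$. If the first attempt yields a larger constant, I would instead apply the crude bound $(x+y)^\alpha\le\max\{1,2^{\alpha-1}\}(x^\alpha+y^\alpha)$ only once, at the level of the combined characteristic, rather than piece by piece.
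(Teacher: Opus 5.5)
Your proof is correct. The left and right inequalities match the paper (monotonicity in $W$, and $x+y\le2\max\{x,y\}$). For the middle inequality you take a genuinely different route.

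The paper works only on the cofinal family $d\ge\Psi(c)$, using explicit threshold maps $\Psi_\ell,\Psi_{ab},\Psi_r$ built from the boundary functions. It checks geometrically that each individual profile then meets at most one tail. This makes every $A_{\ell,j}[w_{c,d}]$, $A_{r,j}[w_{c,d}]$ and $\mathfrak A_\nu[w_{c,d}]$ an exact maximum of its two one-tail values. It then needs $\nu$-independent thresholds to carry the maximum through $\sup_\nu$.

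You instead use that $W$ enters each profile linearly, through $W^q$, inside a single factor that is exactly an $L^\rho$-norm with $\rho\in\{r_1/q,r_2/q\}=\{P',Q\}$. Both exponents exceed $1$ whenever $q<\min\{p_1,p_2\}$. Minkowski therefore gives profile subadditivity with constant one for \emph{every} pair $c<d$, and the supremum over $(\nu,s,t)$ then costs nothing.

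Your route is shorter and needs no cofinal family. It requires no check of where the physical integration intervals lie and no component-uniformity argument. The paper's route gives finer structural information: one-tail profiles and the exact maximum for $\Adeg$. This parallels Lemma~\ref{lem:twotail} and the threshold analysis of Section~\ref{sec:profiles}. It does not improve the constant, because the two left and two right profiles are summed in either argument and $C_q$ comes from the $1/q$ power in both.

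The worry in your last paragraph resolves in your favour. The outer exponents are $q/r_2=1/Q$ and $q/r_1=1/P'$, exactly reciprocal to the Minkowski exponents, so no extra constant arises and the fallback is unnecessary.
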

\begin{proof}
The first inequality is monotonicity.  The directed upper estimate requires a
profile-level bridge because the five supremal objects in
\eqref{eq:DIII1} need not split for arbitrary cutoffs.

For the two left profiles one may take
\[
 \Psi_{\ell,0}(c)=
 b_2^{-1}\!\left(Y_\ell^{-1}(a(b_2(c)))\right),\qquad
 \Psi_{\ell,1}(c)=a_1^{-1}(a(b_2(c))),
\]
and put $\Psi_\ell=\max\{\Psi_{\ell,0},\Psi_{\ell,1}\}$.  Indeed, a
lower-tail hit in an $A_{\ell,0}$ profile forces its common physical upper
endpoint below $\Psi_{\ell,0}(c)$, while a lower-tail hit in an
$A_{\ell,1}$ profile forces $t<b_2(c)$ and hence every physical upper
endpoint below $\Psi_{\ell,1}(c)$.  Therefore each of
$A_{\ell,0}$ and $A_{\ell,1}$ is exactly one-tail whenever
$d\ge\Psi_\ell(c)$.  After adding the two profiles,
\begin{equation}\label{eq:left-bridge}
 \max\{D_\ell^0(c),D_\ell^\infty(d)\}
 \le D_{\ell,\mathrm{III}_1}[w_{c,d}]
 \le D_\ell^0(c)+D_\ell^\infty(d).
\end{equation}

For a left-type central component define
\[
 \Psi_{ab,\ell}(c)=
 a_1^{-1}\!\left(b(a^{-1}(b_1(c)))\right),
\]
and for a right-type component define
\[
 \Psi_{ab,r}(c)=
 a_2^{-1}\!\left(a^{-1}(b(b_2(c)))\right).
\]
Let $\Psi_{ab}=\max\{\Psi_{ab,\ell},\Psi_{ab,r}\}$.  The profile
condition $a(t)\le b(s)$ propagates one lower-tail mass hit to the displayed
uniform upper bound on every mass interval in the same profile.  Hence, for
every component $J_\nu$, independently of $\nu$,
\[
 \mathfrak A_\nu[w_{c,d}]
 =\max\{\mathfrak A_\nu[w\chi_{(0,c)}],
        \mathfrak A_\nu[w\chi_{(d,\infty)}]\}
 \qquad(d\ge\Psi_{ab}(c)).
\]
Taking the supremum over $\nu$ preserves the maximum,
\begin{equation}\label{eq:central-bridge}
 \Adeg[w_{c,d}]
 =\max\{\Adeg[w\chi_{(0,c)}],\Adeg[w\chi_{(d,\infty)}]\}.
\end{equation}
This is the point at which uniformity over a possibly infinite component
family is essential.

For the two right profiles the corrected right-kernel orientation gives
\[
 \Psi_{r,0}(c)=
 a_2^{-1}\!\left(b^{-1}(Y_r(a_2(c)))\right),\qquad
 \Psi_{r,1}(c)=
 a_2^{-1}\!\left(b^{-1}(b_1(c))\right).
\]
With $\Psi_r=\max\{\Psi_{r,0},\Psi_{r,1}\}$ one obtains
\begin{equation}\label{eq:right-bridge}
 \max\{D_r^0(c),D_r^\infty(d)\}
 \le D_{r,\mathrm{III}_1}[w_{c,d}]
 \le D_r^0(c)+D_r^\infty(d).
\end{equation}

Set
\[
 \Psi(c)=\max\{\Psi_\ell(c),\Psi_{ab}(c),\Psi_r(c)\}.
\]
All maps entering these thresholds are continuous, increasing and cofinal, so
$\Psi(c)\downarrow0$ as $c\downarrow0$ and the family
$d\ge\Psi(c)$ is cofinal.  Combining
\eqref{eq:left-bridge}--\eqref{eq:right-bridge} and using
\[
 (x+y)^{1/q}\le C_q(x^{1/q}+y^{1/q}),\qquad x,y\ge0,
\]
gives, on this cofinal family,
\[
 \D_{\mathrm{III}_1}(w_{c,d})
 \le C_q\left(
 \mathbf D_{\mathrm{III}_1}^{0}(c)+
 \mathbf D_{\mathrm{III}_1}^{\infty}(d)\right).
\]
Choose $c_n\downarrow0$ so that
$\mathbf D_{\mathrm{III}_1}^{0}(c_n)\to\Lambda_{\mathrm{III}_1}^{0}$,
and then choose $d_n\uparrow\infty$ with $d_n\ge\Psi(c_n)$ and
$\mathbf D_{\mathrm{III}_1}^{\infty}(d_n)\to\Lambda_{\mathrm{III}_1}^{\infty}$.
Passing to the limit proves the middle inequality in
\eqref{eq:III1separation}. The last one is elementary.
\end{proof}

\begin{theorem}[Region $\mathrm{III}_1$ compactness and essential distance]\label{thm:III1compact}
In Region $\mathrm{III}_1$,
\[
 \B_w\text{ is compact}
 \quad\Longleftrightarrow\quad
 \D_{\mathrm{III}_1}(w)<\infty,
 \quad \Lambda_{\mathrm{III}_1}^{0}=\Lambda_{\mathrm{III}_1}^{\infty}=0.
\]
For every bounded operator,
\[
 \|\B_w\|_{\mathrm e}
 \asymp \max\{\Lambda_{\mathrm{III}_1}^{0},\Lambda_{\mathrm{III}_1}^{\infty}\}.
\]
The conclusion includes the equality boundary $P=Q$.
\end{theorem}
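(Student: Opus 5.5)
The proof follows the same template as Theorem~\ref{thm:uppercompact}. Everything passes through the exact operator-level tail invariant $\tau(\B_w)$. Proposition~\ref{prop:tail-transfer} converts it to the characteristic level, and Theorem~\ref{thm:III1bridge} splits it into the two endpoint defects.

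\textbf{Step 1 (essential distance).} Assume $\B_w$ is bounded, so that \eqref{eq:III1bounded} gives $\D_{\mathrm{III}_1}(w)<\infty$. By \eqref{eq:gamma-tau} and Proposition~\ref{prop:tail-transfer} for $\mathfrak R=\mathrm{III}_1$,
\[
 C^{-1}\D_{\mathrm{III}_1}(w_{c,d})\le\gamma(c,d)\le C\,\D_{\mathrm{III}_1}(w_{c,d})
\]
for every $0<c<d<\infty$, with $C$ independent of $(c,d)$. Taking the directed infimum gives
\[
 C^{-1}\Lambda_{\mathrm{III}_1}(w)\le\tau(\B_w)\le C\Lambda_{\mathrm{III}_1}(w).
\]
Theorem~\ref{thm:exactdistance} then identifies $\tau(\B_w)$ with $\|\B_w\|_{\mathrm e}$. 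Finally, \eqref{eq:III1separation} gives
\[
 \Lambda_{\mathrm{III}_1}(w)\asymp\max\{\Lambda^0_{\mathrm{III}_1},\Lambda^\infty_{\mathrm{III}_1}\},
\]
with constants $1$ and $2C_q$. Composing these comparisons yields the essential-distance estimate, with constant $2C_qC$.

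\textbf{Step 2 (compactness criterion).} For the forward direction, a compact $\B_w$ is bounded, so $\D_{\mathrm{III}_1}(w)<\infty$ by \eqref{eq:III1bounded}. Theorem~\ref{thm:tailcompact} gives $\tau(\B_w)=0$. By Step 1 this forces $\Lambda_{\mathrm{III}_1}(w)=0$, and the first inequality in \eqref{eq:III1separation} then gives both defects zero.

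For the converse, $\D_{\mathrm{III}_1}(w)<\infty$ gives boundedness via \eqref{eq:III1bounded}. Vanishing of both defects gives $\Lambda_{\mathrm{III}_1}(w)=0$ by the middle inequality of \eqref{eq:III1separation}. Hence $\tau(\B_w)=0$, and Theorem~\ref{thm:tailcompact} yields compactness.

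One technical point needs checking. The defects $\mathbf D^0_{\mathrm{III}_1}(c)$ and $\mathbf D^\infty_{\mathrm{III}_1}(d)$ must be finite and monotone, so that the limits in \eqref{eq:III1defects} are meaningful and the bridge applies. Finiteness follows from monotonicity of every characteristic in $W$, since $w\chi_{(0,c)}\le w$, and from $\D_{\mathrm{III}_1}(w)<\infty$.

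\textbf{Step 3 (equality boundary).} Nothing in Steps 1--2 distinguishes $P<Q$ from $P=Q$. Criterion \eqref{eq:III1bounded} explicitly includes $P=Q$, and the bridge of Theorem~\ref{thm:III1bridge} uses only supremal profile geometry, not the sign of $1/Q-1/P$. The boundary case is therefore covered automatically.

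The main obstacle is already isolated in Theorem~\ref{thm:III1bridge}: the central-component bridge \eqref{eq:central-bridge} must hold uniformly over a possibly infinite family $\{J_\nu\}$, and it is taken as given here. What remains in this proof is to check that the constant in Proposition~\ref{prop:tail-transfer} stays uniform under the cutoff $w_{c,d}$ for the continuous derived boundaries. This is asserted there via Appendix~\ref{app:interface}. I would cite it rather than re-prove it.
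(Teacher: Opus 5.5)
Your proposal is correct and is the paper's own argument: the published proof is exactly the combination of Theorem~\ref{thm:III1bridge}, Proposition~\ref{prop:tail-transfer}, and Theorems~\ref{thm:tailcompact} and~\ref{thm:exactdistance}, and your Steps~1--3 make the resulting constants explicit ($C^{-1}$ below and $2C_qC$ above). The only slip is cosmetic: the limits in \eqref{eq:III1defects} exist in $[0,\infty]$ by monotonicity alone, so finiteness is not needed for them to be meaningful.
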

\begin{proof}
Combine Theorem~\ref{thm:III1bridge} with
Proposition~\ref{prop:tail-transfer} and the exact operator-level
Theorems~\ref{thm:tailcompact} and~\ref{thm:exactdistance}.
\end{proof}

\section{Equivalent endpoint-profile conditions}\label{sec:profiles}

The cutoff defects in \eqref{eq:III1defects} are the primary formulation.  We
now identify them with explicit parameter-endpoint limits.  Let
$\Phi_{\ell,0},\Phi_{\ell,1},\Phi_{r,0},\Phi_{r,1}$ denote the full-weight
expressions inside the double suprema in
\eqref{eq:Al0}--\eqref{eq:Ar1}.

\begin{proposition}[Representative left profile]\label{prop:leftprofile}
For $A_{\ell,0}$,
\begin{align}
\lim_{c\downarrow0}A_{\ell,0}[w\chi_{(0,c)}]
&=\lim_{\varepsilon\downarrow0}
\sup_{\substack{s\le t\le Y_\ell^{-1}(a(s))\\t<\varepsilon}}
\Phi_{\ell,0}(s,t;w),\label{eq:leftlowerprofile}\\
\lim_{d\uparrow\infty}A_{\ell,0}[w\chi_{(d,\infty)}]
&=\lim_{T\uparrow\infty}
\sup_{\substack{s\le t\le Y_\ell^{-1}(a(s))\\s>T}}
\Phi_{\ell,0}(s,t;w).\label{eq:leftupperprofile}
\end{align}
\end{proposition}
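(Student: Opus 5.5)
The plan is to make both identities exact comparisons, for each fixed cutoff, between the truncated-weight supremum and the full-weight supremum over a restricted family of profiles. Everything rests on locating where the output weight enters $\Phi_{\ell,0}$. Since $Y_\ell=a_1\circ b_2^{-1}$, we have $a_1^{-1}(Y_\ell(t))=b_2^{-1}(t)$. Hence for every admissible $(s,t)$ and every $x\in[s,t]$,
\[
 k_{\ell,W}(Y_\ell(t),x)=\int_{b_2^{-1}(x)}^{b_2^{-1}(t)}W^q .
\]
So $W$ is only ever integrated over the physical interval $(b_2^{-1}(s),b_2^{-1}(t))$. The second factor $\bigl(\int_{Y_\ell(t)}^{a(s)}\mathcal V_1\bigr)^{q/r_1}$ does not involve $W$. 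It is nonincreasing in $t$ and nondecreasing in $s$, because $Y_\ell$ and $a$ are increasing. We use the conventions $0\cdot\infty=0$ and $\sup\varnothing=0$ throughout, and all quantities below are monotone in the cutoff, so the limits exist in $[0,\infty]$.

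\emph{Lower endpoint.} Fix $c>0$ and put $\varepsilon_c=b_2(c)$.

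For the inequality "$\ge$": if $t<\varepsilon_c$, then $b_2^{-1}(t)<c$. Every weight interval in the profile then lies in $(0,c)$, so $\Phi_{\ell,0}(s,t;w\chi_{(0,c)})=\Phi_{\ell,0}(s,t;w)$. This gives
\[
 \sup_{t<\varepsilon_c}\Phi_{\ell,0}(s,t;w)\le A_{\ell,0}[w\chi_{(0,c)}].
\]

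For the inequality "$\le$": take an arbitrary admissible $(s,t)$ and set $t'=\min\{t,\varepsilon_c\}$.
\begin{itemize}
\item If $s\ge\varepsilon_c$, the truncated kernel vanishes identically and the profile contributes $0$.
\item Otherwise $s\le t'\le t\le Y_\ell^{-1}(a(s))$, so $(s,t')$ is admissible.
\end{itemize}
In the second case, for $x\in[s,t]$ the truncated kernel equals $\int_{b_2^{-1}(x)}^{\min\{c,b_2^{-1}(t)\}}w^q$. This is $k_{\ell,w}(Y_\ell(t'),x)$ when $x\le t'$, and it is $0$ when $x\ge t'$. The $\mathcal V_2$-integral therefore shrinks to one over $[s,t']$ with the full weight. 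The $\mathcal V_1$-factor can only increase when $t$ is replaced by $t'$. Hence $\Phi_{\ell,0}(s,t;w\chi_{(0,c)})\le\Phi_{\ell,0}(s,t';w)$ with $t'\le\varepsilon_c$. To get the strict inequality $t<\varepsilon$ in the statement, compare with $\varepsilon=b_2(c')$ for any $c'>c$. Letting $c\downarrow0$ then sandwiches the two monotone limits, which proves \eqref{eq:leftlowerprofile}.

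\emph{Upper endpoint.} Fix $d$ and put $T_d=b_2(d)$.

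For the inequality "$\ge$": if $s>T_d$, then $b_2^{-1}(x)>d$ for all $x\ge s$. The profile sees only $(d,\infty)$ and is unchanged by the cutoff.

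For the inequality "$\le$": take an admissible $(s,t)$ and set $s'=\max\{s,T_d\}$.
\begin{itemize}
\item If $t\le T_d$, the truncated profile is $0$.
\item Otherwise $s'\le t$, and $t\le Y_\ell^{-1}(a(s))\le Y_\ell^{-1}(a(s'))$, so $(s',t)$ is admissible.
\end{itemize}
In the second case the truncated kernel is $k_{\ell,w}(Y_\ell(t),\max\{x,T_d\})$. It vanishes for $x<T_d$, up to the boundary point, and coincides with the full kernel on $[s',t]$. The $\mathcal V_1$-factor increases when $s$ is replaced by $s'$, since $a(s')\ge a(s)$. Hence $\Phi_{\ell,0}(s,t;w\chi_{(d,\infty)})\le\Phi_{\ell,0}(s',t;w)$ with $s'\ge T_d$. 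Taking any $T<T_d$ and letting $d\uparrow\infty$ proves \eqref{eq:leftupperprofile}.

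The main obstacle is the reduction step in each direction, namely moving the non-cut endpoint ($t\mapsto t'$ or $s\mapsto s'$). One must check two things there:
\begin{itemize}
\item the new pair stays admissible, using $t\le Y_\ell^{-1}(a(s))$ and the monotonicity of $a$ and $Y_\ell$;
\item the $\mathcal V_1$-factor moves in the favourable direction.
\end{itemize}
Both points hinge on the identity $a_1^{-1}\circ Y_\ell=b_2^{-1}$. Once it is written out, the rest of the argument is bookkeeping of monotone limits.
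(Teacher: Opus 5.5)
You have a genuine gap in the upper-endpoint inequality ``$\le$''. Your lower-endpoint argument is sound: the lower cutoff acts on the \emph{fixed} right end $b_2^{-1}(t)$ of the physical interval $(b_2^{-1}(x),b_2^{-1}(t))$, so replacing $t$ by $t'=\min\{t,b_2(c)\}$ reproduces the truncated kernel exactly.

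The upper case is different. You correctly write the truncated kernel as $k_{\ell,w}(Y_\ell(t),\max\{x,T_d\})$, but then assert that it vanishes for $x<T_d$, and it does not. For $s\le x<T_d<t$ it equals the constant $k_{\ell,w}(Y_\ell(t),T_d)=\int_d^{b_2^{-1}(t)}w^q$, which is positive in general. The reason is that the upper cutoff acts on the \emph{moving} left end $b_2^{-1}(x)$, so it freezes the kernel rather than killing it. The truncated $\mathcal V_2$-integral therefore contains the extra term $k_{\ell,w}(Y_\ell(t),T_d)^{r_2/q}\int_s^{T_d}\mathcal V_2$, which is absent from $\Phi_{\ell,0}(s',t;w)$. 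Meanwhile the gain in the $\mathcal V_1$-factor from $a(s)$ to $a(s')$ can be negligible, for instance when little $\mathcal V_1$-mass lies on $(a(s),a(T_d))$. So $\Phi_{\ell,0}(s,t;w\chi_{(d,\infty)})\le\Phi_{\ell,0}(s',t;w)$ does not follow and can fail. The endpoint-shifting device does not transfer from the lower to the upper endpoint.

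The repair is the paper's argument, which never moves an endpoint. Use monotonicity in the weight, $\Phi_{\ell,0}(s,t;w\chi_{(d,\infty)})\le\Phi_{\ell,0}(s,t;w)$, together with the observation that a nonzero truncated profile requires $b_2^{-1}(t)>d$, i.e.\ $t>b_2(d)$. Admissibility $Y_\ell(t)\le a(s)$ then forces $s>a^{-1}(Y_\ell(b_2(d)))=a^{-1}(a_1(d))$, a threshold tending to $\infty$ with $d$. Hence $A_{\ell,0}[w\chi_{(d,\infty)}]$ is at most the supremum of $\Phi_{\ell,0}(s,t;w)$ over admissible $(s,t)$ with $s>a^{-1}(a_1(d))$, which is the missing half of the upper identity.

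The paper treats the lower endpoint the same way: a nonzero truncated profile gives $s<b_2(c)$, hence $t<Y_\ell^{-1}(a(b_2(c)))$. Your $t'$-shift there is a valid alternative and gives the sharper threshold $b_2(c)$, but the cruder admissibility-based threshold is all the limit identities need.
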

\begin{proof}
For $x\in[s,t]$, the kernel $k_{\ell,w}(Y_\ell(t),x)$ uses the physical interval
\[
 (b_2^{-1}(x),a_1^{-1}(Y_\ell(t)))=(b_2^{-1}(x),b_2^{-1}(t)).
\]
If a lower-cutoff profile is nonzero, some such interval meets $(0,c)$, so
$x<b_2(c)$.  Since $s\le x$ and $t\le Y_\ell^{-1}(a(s))$,
\[
 t<Y_\ell^{-1}(a(b_2(c)))=:\varepsilon_{\ell,0}(c)\downarrow0.
\]
This gives one inequality in \eqref{eq:leftlowerprofile}.  Conversely, if
$t<\varepsilon$, every physical interval lies below
$b_2^{-1}(\varepsilon)$. Choosing $c_\varepsilon=b_2^{-1}(\varepsilon)$ makes
the full and lower-cutoff profiles coincide.  This proves
\eqref{eq:leftlowerprofile}.

If an upper-cutoff profile is nonzero, then
$b_2^{-1}(t)>d$, hence $t>b_2(d)$.  Admissibility gives
$Y_\ell(t)\le a(s)$ and therefore
\[
 s>a^{-1}(Y_\ell(b_2(d)))=:T_{\ell,0}(d)\uparrow\infty.
\]
Conversely, if $s>T$, every physical interval begins beyond
$b_2^{-1}(T)$.  This proves \eqref{eq:leftupperprofile}.
\end{proof}

The same reasoning for $k_{\ell,w}(y,t)$ gives
\begin{align}
\lim_{c\downarrow0}A_{\ell,1}[w\chi_{(0,c)}]
&=\lim_{\varepsilon\downarrow0}
\sup_{\substack{s\le t\le Y_\ell^{-1}(a(s))\\t<\varepsilon}}
\Phi_{\ell,1}(s,t;w),\label{eq:left1lower}\\
\lim_{d\uparrow\infty}A_{\ell,1}[w\chi_{(d,\infty)}]
&=\lim_{T\uparrow\infty}
\sup_{\substack{s\le t\le Y_\ell^{-1}(a(s))\\s>T}}
\Phi_{\ell,1}(s,t;w).\label{eq:left1upper}
\end{align}
Indeed a lower-tail hit forces $t<b_2(c)$, while $t<\varepsilon$ places all
intervals below $a_1^{-1}(a(\varepsilon))$. An upper-tail hit forces
$s>a^{-1}(a_1(d))$, while $s>T$ places every interval above
$b_2^{-1}(T)$.

\subsection{Uniform central profiles}
Define
\[
\Phi_\nu(s,t;w)=
\left(\int_s^tU_{\nu,w}(x)^Q\,dx\right)^{1/Q}
\left(\int_{a(t)}^{b(s)}V_{\nu,w}(y)^{P'}\,dy\right)^{1/P'}.
\]

\begin{theorem}[Uniform central endpoint profiles]\label{thm:centralprofile}
One has
\begin{align}
\lim_{c\downarrow0}\Adeg[w\chi_{(0,c)}]
&=\lim_{\varepsilon\downarrow0}
\sup_{\nu}\sup_{\substack{s,t\in J_\nu,\ s\le t,\ a(t)\le b(s)\\t<\varepsilon}}
\Phi_\nu(s,t;w),\label{eq:centrallower}\\
\lim_{d\uparrow\infty}\Adeg[w\chi_{(d,\infty)}]
&=\lim_{T\uparrow\infty}
\sup_{\nu}\sup_{\substack{s,t\in J_\nu,\ s\le t,\ a(t)\le b(s)\\s>T}}
\Phi_\nu(s,t;w).\label{eq:centralupper}
\end{align}
The threshold functions in the proof are independent of $\nu$.
\end{theorem}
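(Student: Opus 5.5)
The plan is to mimic the proof of Proposition~\ref{prop:leftprofile}, but to track, inside a single central profile $\Phi_\nu(s,t;\cdot)$, every physical interval on which the output weight enters. Then a lower-tail (resp. upper-tail) hit forces $t$ small (resp. $s$ large) through thresholds built only from the global maps $a,b,a_i,b_i$, never from $\nu$.

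\textbf{Identifying the physical intervals.} On a left component the weight enters only through $V_{\nu,w}=\mathcal V_1^{q/r_1}m_{1,w}$. For $y\in(a(t),b(s))$ this mass integrates $w^q$ over $\Delta_1^{-1}(y)=(b_1^{-1}(y),a_1^{-1}(y))$. On a right component the weight enters through $U_{\nu,w}=\mathcal V_2^{q/r_2}m_{2,w}$, with $m_{2,w}(x)$ integrating over $\Delta_2^{-1}(x)=(b_2^{-1}(x),a_2^{-1}(x))$ for $x\in(s,t)$. In either case the cutoff profile $\Phi_\nu(s,t;w\chi_E)$ is nonincreasing in $E$. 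It coincides with $\Phi_\nu(s,t;w)$ as soon as all the relevant physical intervals lie inside $E$, and it vanishes if none of them meets $E$.

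\textbf{Lower endpoint.} For ``$\le$'' in \eqref{eq:centrallower}, suppose $\Phi_\nu(s,t;w\chi_{(0,c)})\neq0$.
\begin{itemize}
\item On a left component, some $y\in(a(t),b(s))$ has $b_1^{-1}(y)<c$, so $a(t)<y<b_1(c)$ and $t<a^{-1}(b_1(c))=:\varepsilon_a(c)$.
\item On a right component, some $x\in(s,t)$ has $x<b_2(c)$, so $s<b_2(c)$. From $a(t)\le b(s)<b(b_2(c))$ we get $t<a^{-1}(b(b_2(c)))=:\varepsilon_b(c)$.
\end{itemize}
Both thresholds tend to $0$ as $c\downarrow0$ and do not depend on $\nu$. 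Hence
\[
\Adeg[w\chi_{(0,c)}]\le\sup_\nu\sup_{t<\max\{\varepsilon_a,\varepsilon_b\}(c)}\Phi_\nu(s,t;w).
\]
For ``$\ge$'', suppose $t<\varepsilon$.
\begin{itemize}
\item On a left component, $y<b(s)\le b(t)<b(\varepsilon)$, so every interval lies below $a_1^{-1}(b(\varepsilon))$.
\item On a right component, $x<t<\varepsilon$, so every interval lies below $a_2^{-1}(\varepsilon)$.
\end{itemize}
Taking $c_\varepsilon=\max\{a_1^{-1}(b(\varepsilon)),a_2^{-1}(\varepsilon)\}$ makes the full and cutoff profiles coincide for every $\nu$. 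Monotonicity in $c$ and $\varepsilon$ then yields equality of the two limits.

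\textbf{Upper endpoint.} For \eqref{eq:centralupper} the argument is symmetric. Suppose the upper-cutoff profile is nonzero.
\begin{itemize}
\item On a left component, some $y$ has $a_1^{-1}(y)>d$, so $b(s)>y>a_1(d)$ and $s>b^{-1}(a_1(d))$.
\item On a right component, some $x<t$ has $a_2^{-1}(x)>d$, so $t>a_2(d)$. Then $b(s)\ge a(t)>a(a_2(d))$, so $s>b^{-1}(a(a_2(d)))$.
\end{itemize}
Both bounds tend to $\infty$ uniformly in $\nu$. Conversely, suppose $s>T$.
\begin{itemize}
\item On a left component, $y>a(t)\ge a(s)>a(T)$, so every interval begins beyond $b_1^{-1}(a(T))$.
\item On a right component, $x>s>T$, so every interval begins beyond $b_2^{-1}(T)$.
\end{itemize}
Choosing $d_T$ as the minimum of these two points gives coincidence of the full and cutoff profiles.

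\textbf{Expected obstacle.} The main care point is that the constraint $a(t)\le b(s)$ and the containment $s,t\in J_\nu$ are the only links between the variables. Each threshold must therefore be derived solely from the monotonicity of the global maps $a,b,a_i,b_i$ and their inverses, which are continuous increasing bijections. It must not use any component endpoint. This independence from $\nu$ is what lets the suprema over the possibly infinite family $\mathcal N$ commute with the directed limits.
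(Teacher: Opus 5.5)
Your proposal is correct and follows the paper's proof essentially step for step. It uses the same observation that the cutoff enters only through $m_{1,w}$ on left components and only through $m_{2,w}$ on right components, and the same $\nu$-independent thresholds in the squeeze: $a^{-1}(b_1(c))$, $a^{-1}(b(b_2(c)))$, $a_1^{-1}(b(\varepsilon))$, $a_2^{-1}(\varepsilon)$, $b^{-1}(a_1(d))$, $b^{-1}(a(a_2(d)))$, $b_1^{-1}(a(T))$ and $b_2^{-1}(T)$. One wording slip: $\Phi_\nu(s,t;w\chi_E)$ is nondecreasing, not nonincreasing, as $E$ is enlarged; this does not affect the argument, which only uses $\Phi_\nu(s,t;w\chi_E)\le\Phi_\nu(s,t;w)$.
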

\begin{proof}
On a left central component the cutoff enters only through
$m_{1,W}$.  If a lower-tail profile is nonzero, some interval
$\Delta_1^{-1}(y)$ with $y\in[a(t),b(s)]$ meets $(0,c)$.  Hence
$y<b_1(c)$ and therefore
\[
 t<a^{-1}(b_1(c)).
\]
Conversely, if $t<\varepsilon$, then $s\le t$ and every mass parameter obeys
$y\le b(s)\le b(t)<b(\varepsilon)$, so all mass intervals lie in
$(0,a_1^{-1}(b(\varepsilon)))$.  For the upper endpoint, a hit of
$(d,\infty)$ gives $y>a_1(d)$ and hence
$s>b^{-1}(a_1(d))$. Conversely $s>T$ puts every mass interval above
$b_1^{-1}(a(T))$.

On a right central component the cutoff enters only through $m_{2,W}$.  A
lower-tail hit gives some $x\in[s,t]$ with $x<b_2(c)$.  Then
$b(s)\le b(x)<b(b_2(c))$, and $a(t)\le b(s)$ yields
\[
 t<a^{-1}(b(b_2(c))).
\]
Conversely $t<\varepsilon$ puts all mass intervals below
$a_2^{-1}(\varepsilon)$.  For the upper endpoint, a hit gives
$x>a_2(d)$ and therefore $s>b^{-1}(a(a_2(d)))$. Conversely $s>T$ puts all
mass intervals above $b_2^{-1}(T)$.

Thus, for example, one may take the lower threshold
\[
 \varepsilon_{ab}(c)=
 \max\{a^{-1}(b_1(c)),\ a^{-1}(b(b_2(c)))\}
\]
and analogous maxima/minima for the converse and upper thresholds.  Every such
threshold depends only on the global boundary maps and not on the component
index.  The squeeze therefore survives the supremum over $\nu$, proving
\eqref{eq:centrallower}--\eqref{eq:centralupper}.
\end{proof}

\begin{proposition}[Representative right profile]\label{prop:rightprofile}
For $A_{r,0}$,
\begin{align}
\lim_{c\downarrow0}A_{r,0}[w\chi_{(0,c)}]
&=\lim_{\varepsilon\downarrow0}
\sup_{\substack{Y_r^{-1}(b(t))\le s\le t\\t<\varepsilon}}
\Phi_{r,0}(s,t;w),\label{eq:rightlowerprofile}\\
\lim_{d\uparrow\infty}A_{r,0}[w\chi_{(d,\infty)}]
&=\lim_{T\uparrow\infty}
\sup_{\substack{Y_r^{-1}(b(t))\le s\le t\\s>T}}
\Phi_{r,0}(s,t;w).\label{eq:rightupperprofile}
\end{align}
\end{proposition}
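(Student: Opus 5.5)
The argument follows the proof of Proposition~\ref{prop:leftprofile}, with the right kernel in the corrected orientation \eqref{eq:right-orientations}. The first step is to locate the physical intervals that carry the output weight in an $A_{r,0}$ profile. For $x\in[s,t]$ the kernel $k_{r,w}(x,Y_r(s))$ integrates $w^q$ over
\[
 \bigl(b_1^{-1}(Y_r(s)),a_2^{-1}(x)\bigr)=\bigl(a_2^{-1}(s),a_2^{-1}(x)\bigr),
\]
since $b_1^{-1}\circ Y_r=a_2^{-1}$. Every such interval is therefore contained in the common window $(a_2^{-1}(s),a_2^{-1}(t))$, and it shares the fixed lower endpoint $a_2^{-1}(s)$. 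The second factor $\int_{b(t)}^{Y_r(s)}\mathcal V_1$ contains no $w$. Hence the cutoff affects $\Phi_{r,0}(s,t;\cdot)$ only through the kernel, and the profile is monotone in the weight.

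For the lower endpoint, first suppose a lower-cutoff profile is nonzero. Then some interval meets $(0,c)$, so $a_2^{-1}(s)<c$, that is, $s<a_2(c)$. Admissibility gives $Y_r^{-1}(b(t))\le s$, hence
\[
 t<b^{-1}\bigl(Y_r(a_2(c))\bigr)=:\varepsilon_{r,0}(c)\downarrow0.
\]
Every nonzero lower-cutoff profile is thus bounded by the full profile restricted to $t<\varepsilon_{r,0}(c)$. This gives ``$\le$'' in \eqref{eq:rightlowerprofile} after letting $c\downarrow0$. For the converse, take $t<\varepsilon$; every physical interval then lies below $a_2^{-1}(t)<a_2^{-1}(\varepsilon)$. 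Choosing $c_\varepsilon=a_2^{-1}(\varepsilon)$ makes the full and cutoff profiles coincide, so
\[
 \sup_{t<\varepsilon}\Phi_{r,0}\le A_{r,0}[w\chi_{(0,c_\varepsilon)}].
\]
Since $c_\varepsilon\downarrow0$ as $\varepsilon\downarrow0$ and both sides are monotone, the two limits agree.

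For the upper endpoint, suppose an upper-cutoff profile is nonzero. Then $a_2^{-1}(x)>d$ for some $x\le t$, so $t>a_2(d)$. Admissibility gives $b(t)\le Y_r(s)$, which should yield
\[
 s>Y_r^{-1}\bigl(b(a_2(d))\bigr)=:T_{r,0}(d)\uparrow\infty.
\]
For the converse, $s>T$ places every interval above $a_2^{-1}(T)$. Taking $d_T=a_2^{-1}(T)$ makes the full and cutoff profiles coincide on $\{s>T\}$, and the squeeze argument concludes \eqref{eq:rightupperprofile}.

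The main obstacle is to get the threshold inequalities in the right direction. The lower-endpoint step needs $t$ to be controlled from $s$, and this comes from the constraint $Y_r^{-1}(b(t))\le s$ rather than from $s\le t$. This is exactly where the orientation of the right kernel matters, so I would check the identity $b_1^{-1}\circ Y_r=a_2^{-1}$ and the monotonicity of each composed map carefully. Degenerate profiles have value $0$ by convention, which handles empty admissible sets.
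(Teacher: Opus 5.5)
Your proposal is correct and follows the paper's proof essentially step for step. You use the same physical interval $(a_2^{-1}(s),a_2^{-1}(x))$ obtained from $b_1^{-1}\circ Y_r=a_2^{-1}$, the same hit thresholds $b^{-1}(Y_r(a_2(c)))$ and $Y_r^{-1}(b(a_2(d)))$ coming from the admissibility constraint $b(t)\le Y_r(s)$, and the same converse cutoffs $a_2^{-1}(\varepsilon)$ and $a_2^{-1}(T)$; your explicit squeeze via monotonicity in the weight just spells out what the paper leaves implicit.
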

\begin{proof}
For $x\in[s,t]$ the relevant physical interval is
\[
 (b_1^{-1}(Y_r(s)),a_2^{-1}(x))=(a_2^{-1}(s),a_2^{-1}(x)).
\]
A lower-tail hit gives $s<a_2(c)$. Since $b(t)\le Y_r(s)$, this implies
$t<b^{-1}(Y_r(a_2(c)))$.  Conversely $t<\varepsilon$ places every interval
below $a_2^{-1}(\varepsilon)$.  An upper-tail hit gives $t>a_2(d)$ and hence
$s>Y_r^{-1}(b(a_2(d)))$. Conversely $s>T$ places the common lower endpoint
above $a_2^{-1}(T)$.  This proves the two identities.
\end{proof}

For $A_{r,1}$ the same argument gives
\begin{align}
\lim_{c\downarrow0}A_{r,1}[w\chi_{(0,c)}]
&=\lim_{\varepsilon\downarrow0}
\sup_{\substack{Y_r^{-1}(b(t))\le s\le t\\t<\varepsilon}}
\Phi_{r,1}(s,t;w),\label{eq:right1lower}\\
\lim_{d\uparrow\infty}A_{r,1}[w\chi_{(d,\infty)}]
&=\lim_{T\uparrow\infty}
\sup_{\substack{Y_r^{-1}(b(t))\le s\le t\\s>T}}
\Phi_{r,1}(s,t;w).\label{eq:right1upper}
\end{align}
Here a lower-tail hit forces $t<b^{-1}(b_1(c))$, whereas a full profile with
$t<\varepsilon$ lies below $a_2^{-1}(\varepsilon)$. At infinity a hit forces
$s>a_2(d)$, while $s>T$ puts the physical intervals above
$b_1^{-1}(b(T))$.

\begin{corollary}[Equivalent endpoint-profile characterisation]\label{cor:profileequiv}
Let $L_{\ell,j}^{0},L_{\ell,j}^{\infty}$,
$L_{r,j}^{0},L_{r,j}^{\infty}$ $(j=0,1)$ be the endpoint limits on the right
of \eqref{eq:leftlowerprofile}--\eqref{eq:right1upper}, and let
$L_{ab}^{0},L_{ab}^{\infty}$ be the two uniform central limits.  Then
\begin{align*}
\Lambda_{\mathrm{III}_1}^{0}
={}&(L_{\ell,0}^{0}+L_{\ell,1}^{0})^{1/q}
 +(L_{ab}^{0})^{1/q}
 +(L_{r,0}^{0}+L_{r,1}^{0})^{1/q},\\
\Lambda_{\mathrm{III}_1}^{\infty}
={}&(L_{\ell,0}^{\infty}+L_{\ell,1}^{\infty})^{1/q}
 +(L_{ab}^{\infty})^{1/q}
 +(L_{r,0}^{\infty}+L_{r,1}^{\infty})^{1/q}.
\end{align*}
Thus the cutoff-defect and parameter-endpoint formulations are equivalent.
\end{corollary}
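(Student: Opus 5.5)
The plan is to unwind the definition \eqref{eq:III1defects} term by term and pass limits through the monotone summands. By definition,
\[
 \mathbf D_{\mathrm{III}_1}^{0}(c)
 =\bigl(A_{\ell,0}[w\chi_{(0,c)}]+A_{\ell,1}[w\chi_{(0,c)}]\bigr)^{1/q}
 +\Adeg[w\chi_{(0,c)}]^{1/q}
 +\bigl(A_{r,0}[w\chi_{(0,c)}]+A_{r,1}[w\chi_{(0,c)}]\bigr)^{1/q},
\]
since $D_{\ell,\mathrm{III}_1}=A_{\ell,0}+A_{\ell,1}$ and $D_{r,\mathrm{III}_1}=A_{r,0}+A_{r,1}$. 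Each of the five supremal objects is nondecreasing in the weight, and the kernels $k_{\ell,W},k_{r,W}$ and the masses $m_{1,W},m_{2,W}$ enter monotonically. Hence each summand is monotone in $c$ (nonincreasing as $c\downarrow0$), and similarly each is nonincreasing as $d\uparrow\infty$ for the upper cutoffs. So every individual limit exists in $[0,\infty]$.

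First I will note that limits of monotone families commute with finite sums. In $[0,\infty]$ this needs no finiteness, because monotone limits are infima and sums of nonincreasing nonnegative families have limit equal to the sum of limits. Limits also commute with the continuous increasing map $x\mapsto x^{1/q}$ on $[0,\infty]$. Thus
\[
 \Lambda_{\mathrm{III}_1}^{0}
 =\Bigl(\lim_{c\downarrow0}A_{\ell,0}[w\chi_{(0,c)}]+\lim_{c\downarrow0}A_{\ell,1}[w\chi_{(0,c)}]\Bigr)^{1/q}+\cdots,
\]
and likewise at infinity.

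Second, I will substitute the profile identities. These are Proposition~\ref{prop:leftprofile} together with \eqref{eq:left1lower}--\eqref{eq:left1upper} for the left pieces, Theorem~\ref{thm:centralprofile} for the central piece, and Proposition~\ref{prop:rightprofile} together with \eqref{eq:right1lower}--\eqref{eq:right1upper} for the right pieces. Each replaces a cutoff limit by the corresponding parameter-endpoint limit $L^{0}$ or $L^{\infty}$, which gives both displayed formulas.

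The only delicate point is the central term. There, $\Adeg$ is a supremum over a possibly infinite component family, so the interchange of $\lim_c$ with $\sup_\nu$ must not be done naively. This is exactly why I rely on the $\nu$-independent thresholds of Theorem~\ref{thm:centralprofile} rather than on componentwise limits; that theorem already identifies $\lim_c\Adeg[w\chi_{(0,c)}]$ with $L_{ab}^0$. The remaining concern is the convention for infinite values, where the identities should be read in $[0,\infty]$. This is consistent with the paper's stated conventions for extended quantities.
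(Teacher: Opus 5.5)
Your proposal is correct. It is the argument the paper leaves implicit, since the corollary is stated without a separate proof. You split each cutoff defect into its five monotone pieces and pass the monotone limits through the finite sums and the map $x\mapsto x^{1/q}$ in $[0,\infty]$. You then substitute Propositions~\ref{prop:leftprofile} and~\ref{prop:rightprofile}, their companion identities, and the $\nu$-uniform Theorem~\ref{thm:centralprofile}. Your attention to the central supremum over components and to infinite values matches the paper's own emphasis and conventions.
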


\section{The strict subcritical region $\mathrm{III}_2$}\label{sec:III2}

In this region $P>Q$ and the positive summability exponent $R$ is given by
\eqref{eq:R}.  The discrete characteristic of Appendix~\ref{app:III2} is an
$\ell^R$ sum over fixed left blocks, fixed central component blocks, and fixed
right blocks.  The following elementary localisation property is the key to
compactness.

\begin{lemma}[Fixed-block extinction]\label{lem:extinction}
For every fixed left, central, or right discrete block in the Region
$\mathrm{III}_2$ characteristic, all physical $W^q$-integration intervals lie
in a compact subinterval of $(0,\infty)$.  Consequently the corresponding
block evaluated at $w_{c,d}$ is exactly zero for all sufficiently small $c$ and
sufficiently large $d$.
\end{lemma}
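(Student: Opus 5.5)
The plan is to inspect each block type of the Region-$\mathrm{III}_2$ characteristic in Appendix~\ref{app:III2} and read off the physical $W^q$-integration intervals it contains. The key structural fact is that every discrete block is indexed by a fixed pair of consecutive discretisation points (say $\zeta_k<\zeta_{k+1}$ for left/right blocks, or a fixed bounded subinterval of a fixed central component $J_\nu$ with endpoints in $(0,\infty)$). Within a block, the parameters $s,t,x,y$ range over closed bounded sets bounded away from $0$ and $\infty$. It therefore suffices to show that each $W^q$-interval appearing in that block has endpoints given by continuous strictly increasing boundary maps evaluated at these parameters.

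For the left blocks I would use the kernel $k_{\ell,W}(y,t)=\int_{b_2^{-1}(t)}^{a_1^{-1}(y)}W^q$ with $Y_\ell(t)<y<a(t)$. For $t\in[\zeta_k,\zeta_{k+1}]$ and $y\le a(t)$, the interval is contained in $\bigl(b_2^{-1}(\zeta_k),a_1^{-1}(a(\zeta_{k+1}))\bigr)$. The lower endpoint is positive because $b_2^{-1}$ is a bijection of $(0,\infty)$, and the upper endpoint is finite. This is the same endpoint bookkeeping as in Proposition~\ref{prop:leftprofile}, but now with both parameters confined to a fixed block. For the right blocks I would argue symmetrically with $k_{r,W}$ in the orientations \eqref{eq:right-orientations}: the interval $\bigl(b_1^{-1}(y),a_2^{-1}(t)\bigr)$ with $b(t)<y<Y_r(t)$ lies inside $\bigl(b_1^{-1}(b(\zeta_k)),a_2^{-1}(\zeta_{k+1})\bigr)$. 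For central blocks the weight enters only through $m_{1,W}$ or $m_{2,W}$, that is, through $\Delta_1^{-1}(y)=(b_1^{-1}(y),a_1^{-1}(y))$ or $\Delta_2^{-1}(x)$. With $x\in[s,t]$ and $y\in[a(t),b(s)]$ confined to a fixed block, these intervals lie in $\bigl(b_1^{-1}(a(\cdot)),a_1^{-1}(b(\cdot))\bigr)$ evaluated at the block endpoints, as in the proof of Theorem~\ref{thm:centralprofile}. In each case the block yields a compact interval $[\alpha,\beta]\subset(0,\infty)$ containing all its physical $W^q$-intervals.

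The consequence then follows directly. Choose $c<\alpha$ and $d>\beta$. Then $w_{c,d}=w\chi_{(0,c)\cup(d,\infty)}$ vanishes on $[\alpha,\beta]$, so every kernel and mass function in the block is $0$. Every summand of the block is a product in which a $W$-dependent factor appears, either a kernel, an $m_{i,W}$, or $W^q$ itself as an integration density, and the convention $0\cdot\infty=0$ then makes the block exactly zero. Monotonicity in $c$ and $d$ gives the statement for all smaller $c$ and all larger $d$.

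The main obstacle I expect is ensuring that, in every block formula of Appendix~\ref{app:III2}, the $W$-dependence really enters through a factor that vanishes, rather than through a purely $\mathcal V_i$-weighted factor multiplied by a $W$-free term. I would check each block against its definition to confirm that it carries at least one kernel or mass factor. I would also check that the endpoint blocks are not unbounded; if the discretisation includes $k\to\pm\infty$, a single fixed $k$ is still bounded. For central components of infinite length, I would confirm that each block is a bounded piece $[\zeta_k,\zeta_{k+1}]\cap J_\nu$.
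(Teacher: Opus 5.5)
Your proposal is correct and follows essentially the same route as the paper: for each block type you trap every kernel or mass interval in the compact window $[b_2^{-1}(\xi_k),a_1^{-1}(a(\xi_{k+1}))]$ (left), $[b_1^{-1}(b(\zeta_k)),a_2^{-1}(\zeta_{k+1})]$ (right), or $[b_1^{-1}(a(\lambda_{\nu,k})),a_1^{-1}(b(\lambda_{\nu,k+1}))]$, resp.\ $[b_2^{-1}(\lambda_{\nu,k}),a_2^{-1}(\lambda_{\nu,k+1})]$ (central), with the parameters confined by $Y_\ell(\xi_{k+1})=a(\xi_k)$ and $b(\zeta_{k+1})=Y_r(\zeta_k)$. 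You also check that every block carries a cutoff-dependent factor with positive exponent, which the paper leaves implicit; just note that the left grid is $\xi_k$ and the central blocks are $[\lambda_{\nu,k},\lambda_{\nu,k+1}]$ from the $T_\nu$-orbit rather than $[\zeta_k,\zeta_{k+1}]\cap J_\nu$, which changes nothing since these are compact subintervals of $J_\nu$.
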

\begin{proof}
For the left discretisation fix $k$.  The four kernel occurrences are
\[
 k_\ell(t,x),\quad k_\ell(y,t),\quad
 k_\ell(a(\xi_k),x),\quad k_\ell(y,\xi_{k+1}).
\]
Using $Y_\ell(\xi_{k+1})=a(\xi_k)$ and the parameter ranges in
Appendix~\ref{app:III2}, every physical interval is contained in
\[
 \left[b_2^{-1}(\xi_k),\ a_1^{-1}(a(\xi_{k+1}))\right]\Subset(0,\infty).
\]
Hence all four fixed left blocks vanish once the cutoff is outside this compact
interval.

For the right discretisation, the orientations are exactly
\eqref{eq:right-orientations}.  Their physical intervals are, respectively,
\[
 (a_2^{-1}(\zeta_k),a_2^{-1}(x)),\quad
 (b_1^{-1}(y),a_2^{-1}(\zeta_k)),
\]
\[
 (b_1^{-1}(t),a_2^{-1}(x)),\quad
 (b_1^{-1}(y),a_2^{-1}(t)).
\]
The defining ranges and $b(\zeta_{k+1})=Y_r(\zeta_k)$ place all of them in
\[
 \left[b_1^{-1}(b(\zeta_k)),\ a_2^{-1}(\zeta_{k+1})\right]\Subset(0,\infty).
\]
Thus each fixed right block is eventually zero.

Fix now a central pair $(\nu,k)$.  On a left central component, cutoff
dependence occurs only through $m_{1,W}$, and every mass parameter lies in
\[
 [a(\lambda_{\nu,k}),b(\lambda_{\nu,k+1})].
\]
Thus all intervals $\Delta_1^{-1}(s)$ involved in the block lie in
\[
 \left[b_1^{-1}(a(\lambda_{\nu,k})),
 a_1^{-1}(b(\lambda_{\nu,k+1}))\right]\Subset(0,\infty).
\]
On a right central component, cutoff dependence occurs only through $m_{2,W}$,
and the relevant mass parameters lie in
$[\lambda_{\nu,k},\lambda_{\nu,k+1}]$, so all intervals lie in
\[
 \left[b_2^{-1}(\lambda_{\nu,k}),a_2^{-1}(\lambda_{\nu,k+1})\right].
\]
Every central block is therefore eventually exactly zero as well.
\end{proof}

\begin{lemma}[Counting-measure dominated convergence]\label{lem:countingdc}
Let $I$ be countable, $R>0$, and $0\le x_n(i)\le y(i)$ with
$\sum_{i\in I}y(i)^R<\infty$.  If $x_n(i)\to0$ for each fixed $i$, then
$\sum_i x_n(i)^R\to0$.
\end{lemma}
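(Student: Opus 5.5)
This is the dominated convergence theorem for the counting measure on $I$, applied to the functions $x_n(\cdot)^R$. I would prove it in one of two equivalent ways. The first is to invoke Lebesgue's theorem directly. Counting measure on the countable set $I$ is a $\sigma$-finite measure. The functions $i\mapsto x_n(i)^R$ converge to $0$ pointwise, since $t\mapsto t^R$ is continuous at $0$ for $R>0$. They are dominated by $y(\cdot)^R$ because $t\mapsto t^R$ is increasing on $[0,\infty)$, and $y(\cdot)^R$ is integrable by hypothesis. Lebesgue's theorem then gives $\sum_i x_n(i)^R\to 0$.

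For a self-contained version I would use an $\varepsilon$-splitting, which avoids any measure theory.
\begin{enumerate}
\item Fix $\varepsilon>0$. Since $\sum_{i\in I}y(i)^R<\infty$ is a sum of nonnegative terms over a countable set, it is the supremum of its finite partial sums. So there is a finite set $F\subset I$ with $\sum_{i\in I\setminus F}y(i)^R<\varepsilon/2$.
\item On the complement, domination gives, uniformly in $n$,
\[
\sum_{i\in I\setminus F}x_n(i)^R\le\sum_{i\in I\setminus F}y(i)^R<\varepsilon/2 .
\]
\item On the finite set $F$, each term $x_n(i)^R\to 0$. Hence $\sum_{i\in F}x_n(i)^R<\varepsilon/2$ for all $n\ge n_0$, where $n_0$ depends only on $F$ and $\varepsilon$.
\end{enumerate}
Adding the two pieces gives $\sum_i x_n(i)^R<\varepsilon$ for all $n\ge n_0$, which is the claim.

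There is no real obstacle here. The only points to check are that the argument uses no convexity, so $0<R<1$ is allowed, and that nonnegativity makes the sum independent of how $I$ is ordered. In the application, $x_n(i)$ will be the block values at $w_{c_n,d_n}$. These vanish eventually by Lemma~\ref{lem:extinction}, and they are dominated by the full-weight blocks by monotonicity of the kernels in $W$.
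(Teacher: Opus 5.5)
Your proof is correct, and your first route is exactly the paper's proof: dominated convergence on the counting-measure space $I$, applied to $x_n^R$ with integrable majorant $y^R$. The $\varepsilon$-splitting version is simply that theorem unpacked for counting measure, and as you note it needs no convexity, so $0<R<1$ is covered.
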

\begin{proof}
This is the dominated convergence theorem on the counting-measure space $I$,
applied to $x_n^R$ and $y^R$.
\end{proof}

\begin{theorem}[Automatic compactness in Region $\mathrm{III}_2$]\label{thm:III2compact}
In Region $\mathrm{III}_2$,
\[
 \B_w\text{ bounded}\quad\Longleftrightarrow\quad\B_w\text{ compact}.
\]
Equivalently, $\D_{\mathrm{III}_2}(w)<\infty$ is the complete compactness
condition.  Every bounded operator satisfies
\[
 \tau(\B_w)=\|\B_w\|_{\mathrm e}=0.
\]
\end{theorem}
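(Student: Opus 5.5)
The plan is to reduce everything to $\tau(\B_w)=0$ and then invoke Theorem~\ref{thm:tailcompact} and Theorem~\ref{thm:exactdistance}. The direction compact $\Rightarrow$ bounded is immediate from the definition of compactness, since a relatively compact set in $L^q$ is bounded. Proposition~\ref{prop:tail-transfer} together with \eqref{eq:III2bounded} shows that boundedness is equivalent to $\D_{\mathrm{III}_2}(w)<\infty$. So it remains to prove that a bounded $\B_w$ has $\tau(\B_w)=0$. By \eqref{eq:gamma-tau} and Proposition~\ref{prop:tail-transfer}, it suffices to find a cofinal sequence $c_n\downarrow0$, $d_n\uparrow\infty$ with $\D_{\mathrm{III}_2}(w_{c_n,d_n})\to0$. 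This holds because the constant $C_{\mathrm{III}_2}$ does not depend on $c,d$.

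Fix $c_n=1/n$ and $d_n=n$. By \eqref{eq:DIII2}, $\D_{\mathrm{III}_2}(W)$ is a sum of three powers $1/q$ of $\ell^R$-type sums over countable index sets. These are the left blocks $k$, the central pairs $(\nu,k)$, and the right blocks $k$, as set out in Appendix~\ref{app:III2}. It is enough to show that each of $D_{\ell,\mathrm{III}_2}[w_{c_n,d_n}]$, $\Bdeg[w_{c_n,d_n}]$ and $D^{\mathrm{corr}}_{r,\mathrm{III}_2}[w_{c_n,d_n}]$ tends to $0$. I write each one as $\sum_{i\in I}x_n(i)^R$, possibly raised to a fixed positive power. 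Here $x_n(i)$ is the block quantity at the weight $w_{c_n,d_n}$, and $y(i)$ is the same block at the full weight $w$. Two facts are then needed. The first is domination, $0\le x_n(i)\le y(i)$: every block is monotone in $W$, because $W$ enters only through $W^q$-integrals (the kernels $k_{\ell,W},k_{r,W}$ and masses $m_{i,W}$) raised to positive exponents, and $w_{c,d}\le w$. Boundedness gives $\sum_i y(i)^R<\infty$. The second is pointwise extinction: Lemma~\ref{lem:extinction} gives $x_n(i)=0$ for all large $n$, for each fixed $i$. Lemma~\ref{lem:countingdc} then yields $\sum_i x_n(i)^R\to0$. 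If a block family carries several sub-blocks, for instance the four left kernel orientations, I apply the lemma to each sub-family separately, or to their sum, which is still dominated.

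One point needs care. The central sum $\Bdeg$ runs over a possibly infinite component family, but it is still a single countable index set, so the counting-measure lemma applies to it directly and no uniformity in $\nu$ is needed. A second point is that the discretising sequences $\xi_k,\lambda_{\nu,k},\zeta_k$ must be those of the full weight setup and must not change with the cutoff. Proposition~\ref{prop:tail-transfer} guarantees this, since multiplying by an indicator does not alter the discretising sequences.

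I expect the main obstacle to be the monotonicity and domination step. I have to check from the explicit definitions in Appendix~\ref{app:III2} that every block is a nondecreasing function of $W^q$. In particular, no block may contain $W$ with a negative exponent. This matters when $0<q<1$, where $q'<0$, although that parameter enters only through $\mathcal V_i$, which does not depend on $W$. I also have to check that the $R$-sum is taken of the block values themselves and not of normalised ratios. Once these are confirmed, the conclusion $\tau(\B_w)=0$ follows. Theorem~\ref{thm:tailcompact} then gives compactness, and \eqref{eq:exactdistance} gives $\|\B_w\|_{\mathrm e}=\tau(\B_w)=0$.
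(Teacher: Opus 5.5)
Your proposal is correct and follows the paper's proof essentially step for step: fixed-block extinction (Lemma~\ref{lem:extinction}), domination of each block at $w_{c_n,d_n}$ by the corresponding full-weight block, $\ell^R$-summability from \eqref{eq:III2bounded}, Lemma~\ref{lem:countingdc} over the countable index set, and then Proposition~\ref{prop:tail-transfer} together with Theorems~\ref{thm:tailcompact} and~\ref{thm:exactdistance}. Your additional checks are the points the paper leaves implicit in its appeal to ``monotonicity of all integrals'': every exponent in the blocks is positive, $\mathcal V_i$ does not depend on $W$, and the discretising sequences do not change with the cutoff.
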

\begin{proof}
Only boundedness implies compactness requires proof.  Let
$c_n\downarrow0$, $d_n\uparrow\infty$ and $W_n=w_{c_n,d_n}$.

First, Lemma~\ref{lem:extinction} says that every fixed left block
$B_{\ell,k,j}[W_n]$, central block $\mathfrak b_{\nu,k,j}[W_n]$, and right
block $\widehat B_{r,k,j}[W_n]$ is eventually exactly zero.

Second, $0\le W_n\le w$ and monotonicity of all integrals give
\[
 B_{\ell,k,j}[W_n]^R\le B_{\ell,k,j}[w]^R,
 \quad
 \mathfrak b_{\nu,k,j}[W_n]^R\le\mathfrak b_{\nu,k,j}[w]^R,
\]
\[
 \widehat B_{r,k,j}[W_n]^R\le\widehat B_{r,k,j}[w]^R.
\]
Third, boundedness and \eqref{eq:III2bounded} imply
\[
 \sum_{k,j}B_{\ell,k,j}[w]^R<\infty,
 \qquad
 \sum_{\nu,k,j}\mathfrak b_{\nu,k,j}[w]^R<\infty,
 \qquad
 \sum_{k,j}\widehat B_{r,k,j}[w]^R<\infty.
\]
Hence Lemma~\ref{lem:countingdc} yields
\[
 D_{\ell,\mathrm{III}_2}[W_n]\to0,
 \qquad \Bdeg[W_n]\to0,
 \qquad D_{r,\mathrm{III}_2}^{\mathrm{corr}}[W_n]\to0.
\]
Thus $\D_{\mathrm{III}_2}(W_n)\to0$.  Proposition~\ref{prop:tail-transfer}
gives $\gamma(c_n,d_n)\to0$, hence $\tau(\B_w)=0$.
Theorem~\ref{thm:tailcompact} gives compactness and
Theorem~\ref{thm:exactdistance} gives the exact essential-distance statement.
The mechanism is therefore precisely fixed-block extinction plus full-weight
domination plus $\ell^R$ summability.
\end{proof}

\begin{remark}[The transition at $P=Q$]\label{rem:transition}
At
\[
 \frac1q=\frac1{p_1}+\frac1{p_2}
\]
one has $P=Q$ and $1/Q-1/P=0$ (formally $R=\infty$).  The $\ell^R$ mechanism used in
Theorem~\ref{thm:III2compact} is unavailable.  This equality boundary belongs
to Region $\mathrm{III}_1$, and bounded noncompact examples exist. See
Example~\ref{ex:equality-boundary}.  Under the strict reverse inequality one has
$P>Q$ and the positive exponent $R$ forces tail decay.
\end{remark}

\section{Five-region master classification}\label{sec:master}

\begin{theorem}[Complete compactness classification]\label{thm:master}
Under the assumptions of Section~\ref{sec:prelim}, the five regions have the
following compactness criteria.

\begin{enumerate}[label=\textup{(\roman*)}]
\item If $\mathfrak R\in\{I,\mathrm{II}_1,\mathrm{II}_2\}$, then
\[
 \B_w\text{ compact}
 \iff \D_{\mathfrak R}(w)<\infty
 \text{ and }\Lambda_{\mathfrak R}^{0}=\Lambda_{\mathfrak R}^{\infty}=0.
\]
\item In Region $\mathrm{III}_1$,
\[
 \B_w\text{ compact}
 \iff \D_{\mathrm{III}_1}(w)<\infty
 \text{ and }\Lambda_{\mathrm{III}_1}^{0}=\Lambda_{\mathrm{III}_1}^{\infty}=0.
\]
This includes $P=Q$.
\item In Region $\mathrm{III}_2$,
\[
 \B_w\text{ compact}
 \iff \D_{\mathrm{III}_2}(w)<\infty
 \iff \B_w\text{ bounded}.
\]
\end{enumerate}
\end{theorem}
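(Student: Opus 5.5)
The plan is to assemble Theorem~\ref{thm:master} from the three regional results, after first recording one preliminary fact in each case: $\D_{\mathfrak R}(w)<\infty$ is equivalent to boundedness of $\B_w$. This equivalence is \eqref{eq:upperbounded}, \eqref{eq:III1bounded}, or \eqref{eq:III2bounded} applied with $W=w$. It is also the case $c\downarrow0$, $d\uparrow\infty$ of the comparison behind Proposition~\ref{prop:tail-transfer}, but we only need it at $W=w$. Since a compact bilinear map is in particular bounded, compactness always forces $\D_{\mathfrak R}(w)<\infty$. It therefore suffices in every region to prove the equivalence for bounded $\B_w$, where the operator-level results of Section~\ref{sec:localization} apply.

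For item (i), we quote Theorem~\ref{thm:uppercompact}. For completeness we retrace its chain. Theorem~\ref{thm:tailcompact} gives that compactness of $\B_w$ is equivalent to $\tau(\B_w)=0$. By \eqref{eq:gamma-tau} and Proposition~\ref{prop:tail-transfer}, with constants independent of $c,d$, $\tau(\B_w)=\inf_{c<d}\|\B_{w_{c,d}}\|$ is comparable to $\Lambda_{\mathfrak R}=\inf_{c<d}\D_{\mathfrak R}(w_{c,d})$. Finally, Lemma~\ref{lem:twotail} gives $\Lambda_{\mathfrak R}=\max\{\Lambda^0_{\mathfrak R},\Lambda^\infty_{\mathfrak R}\}$. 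Hence $\tau(\B_w)=0$ holds if and only if both endpoint defects vanish. The only point needing care is that the comparison constants of Proposition~\ref{prop:tail-transfer} are uniform in the cutoff. This uniformity is what allows the infima over $(c,d)$ to be compared.

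Item (ii) follows in the same way from Theorem~\ref{thm:III1compact}. The two-sided bound \eqref{eq:III1separation} of Theorem~\ref{thm:III1bridge} replaces the exact identity of Lemma~\ref{lem:twotail}. The result is that $\Lambda_{\mathrm{III}_1}(w)=0$ if and only if $\Lambda^0_{\mathrm{III}_1}=\Lambda^\infty_{\mathrm{III}_1}=0$, and this zero-set equivalence is all that is required. The equality boundary $P=Q$ is explicitly part of Region $\mathrm{III}_1$ in the partition of Section~\ref{sec:prelim}, and Theorem~\ref{thm:III1compact} covers it.

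Item (iii) is Theorem~\ref{thm:III2compact}: a bounded operator in Region $\mathrm{III}_2$ is compact, and compactness implies boundedness trivially. Combined with \eqref{eq:III2bounded}, this gives the chain of equivalences in (iii). Because the five regions partition the admissible exponents, the three items together constitute the complete classification. None of the steps is a genuine obstacle, since all the work has been done earlier. The only delicate step is the bookkeeping in (i)--(ii) that passes from operator norms of the cutoffs to characteristic defects, and there we will stress the cutoff-uniformity of the constants.
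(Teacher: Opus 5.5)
Your proposal is correct and is essentially the paper's proof, which simply cites Theorems~\ref{thm:uppercompact}, \ref{thm:III1compact}, and~\ref{thm:III2compact} for (i)--(iii). Your retracing through Theorem~\ref{thm:tailcompact}, Proposition~\ref{prop:tail-transfer}, and Lemma~\ref{lem:twotail} or Theorem~\ref{thm:III1bridge} is exactly how those regional theorems are proved.

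The one slip is your aside that the case $W=w$ is the limit $c\downarrow0$, $d\uparrow\infty$ of Proposition~\ref{prop:tail-transfer}. That limit drives $w_{c,d}$ to $0$, not to $w$. You do not use the aside, so just drop it.
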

\begin{proof}
Parts (i), (ii), and (iii) are Theorems~\ref{thm:uppercompact},
~\ref{thm:III1compact}, and~\ref{thm:III2compact}, respectively.
\end{proof}

\begin{table}[ht]
\centering
\small
\renewcommand{\arraystretch}{1.35}
\begin{tabular}{@{}p{1.05cm}p{3.2cm}p{3.0cm}p{3.25cm}p{2.7cm}@{}}
\toprule
Region & Exponent range & Boundedness & Compactness & Essential distance \\
\midrule
$I$ & $1<\max\{p_1,p_2\}\le q$ & $D_I[w]<\infty$ & boundedness + two endpoint limits $0$ & $\asymp\max\{\Lambda_I^0,\Lambda_I^\infty\}$ \\
$\mathrm{II}_1$ & $1<p_1\le q<p_2$ & $D_{\mathrm{II}_1}[w]<\infty$ & boundedness + two endpoint limits $0$ & $\asymp\max\{\Lambda_{\mathrm{II}_1}^0,\Lambda_{\mathrm{II}_1}^\infty\}$ \\
$\mathrm{II}_2$ & $1<p_2\le q<p_1$ & $D_{\mathrm{II}_2}^{\mathrm{corr}}[w]<\infty$ & boundedness + two endpoint limits $0$ & $\asymp\max\{\Lambda_{\mathrm{II}_2}^0,\Lambda_{\mathrm{II}_2}^\infty\}$ \\
$\mathrm{III}_1$ & $0<q<\min\{p_1,p_2\}$, $1/q\le1/p_1+1/p_2$ & $\D_{\mathrm{III}_1}(w)<\infty$ & boundedness + two endpoint limits $0$ & $\asymp\max\{\Lambda_{\mathrm{III}_1}^0,\Lambda_{\mathrm{III}_1}^\infty\}$ \\
$\mathrm{III}_2$ & $0<q<\min\{p_1,p_2\}$, $1/q>1/p_1+1/p_2$ & $\D_{\mathrm{III}_2}(w)<\infty$ & boundedness alone & $0$ exactly \\
\bottomrule
\end{tabular}
\caption{Five-region compactness and essential-distance classification.}
\label{tab:master}
\end{table}

The qualitative dichotomy is therefore
\[
\begin{array}{ll}
 I,\mathrm{II}_1,\mathrm{II}_2,\mathrm{III}_1:
 &\text{boundedness plus lower/upper endpoint vanishing},\\[1mm]
 \mathrm{III}_2:&\text{boundedness alone}.
\end{array}
\]
In the first four regions the controlling tail characteristics do not acquire
the global $R$-summable block structure that forces automatic compactness in
Region $\mathrm{III}_2$.

\section{Essential distances and noncompactness quantities}\label{sec:noncompact}

\begin{theorem}[Five-region essential-distance theorem]\label{thm:essential-master}
For
$\mathfrak R\in\{I,\mathrm{II}_1,\mathrm{II}_2,\mathrm{III}_1\}$, every
bounded operator satisfies
\begin{equation}\label{eq:masteressential}
 \|\B_w\|_{\mathrm e}
 \asymp\max\{\Lambda_{\mathfrak R}^{0},\Lambda_{\mathfrak R}^{\infty}\}.
\end{equation}
For every bounded Region-$\mathrm{III}_2$ operator,
\[
 \|\B_w\|_{\mathrm e}=0.
\]
\end{theorem}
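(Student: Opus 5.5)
The plan is to assemble Theorem~\ref{thm:essential-master} from results already proved, with no new estimate. The starting point is the exact operator-level identity of Theorem~\ref{thm:exactdistance}: for every bounded $\B_w$, $\|\B_w\|_{\mathrm e}=\tau(\B_w)=\inf_{0<c<d<\infty}\|\B_{w_{c,d}}\|$, using \eqref{eq:gamma-tau}. This holds in all five regions because it rests only on Lemma~\ref{lem:localcompact} and Theorem~\ref{thm:abstractlocal}. It therefore suffices to show $\tau(\B_w)\asymp\max\{\Lambda_{\mathfrak R}^0,\Lambda_{\mathfrak R}^\infty\}$ in the first four regions and $\tau(\B_w)=0$ in Region $\mathrm{III}_2$.

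For the first four regions:
\begin{enumerate}[label=\textup{(\arabic*)}]
\item Apply the uniform cutoff transfer, Proposition~\ref{prop:tail-transfer}. Since $C_{\mathfrak R}$ does not depend on $(c,d)$, the bounds $C_{\mathfrak R}^{-1}\D_{\mathfrak R}(w_{c,d})\le\|\B_{w_{c,d}}\|\le C_{\mathfrak R}\D_{\mathfrak R}(w_{c,d})$ survive the infimum over $0<c<d$. This gives $C_{\mathfrak R}^{-1}\Lambda_{\mathfrak R}\le\tau(\B_w)\le C_{\mathfrak R}\Lambda_{\mathfrak R}$, where $\Lambda_{\mathfrak R}=\inf_{c<d}\D_{\mathfrak R}(w_{c,d})$. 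In Region $\mathrm{III}_1$ the same quantity is written $\Lambda_{\mathrm{III}_1}(w)$.
\item Replace $\Lambda_{\mathfrak R}$ by the endpoint defects. For $\mathfrak R\in\{I,\mathrm{II}_1,\mathrm{II}_2\}$, Lemma~\ref{lem:twotail} gives the exact identity $\Lambda_{\mathfrak R}=\max\{\Lambda^0_{\mathfrak R},\Lambda^\infty_{\mathfrak R}\}$. For $\mathrm{III}_1$, Theorem~\ref{thm:III1bridge} gives the two-sided bound
\[
\max\{\Lambda^0,\Lambda^\infty\}\le\Lambda_{\mathrm{III}_1}(w)\le 2C_q\max\{\Lambda^0,\Lambda^\infty\},
\]
whose constant depends only on $q$.
\item Combining (1) and (2) gives \eqref{eq:masteressential}, with comparison constants $C_{\mathfrak R}$ in the first three regions and $2C_qC_{\mathrm{III}_1}$ in Region $\mathrm{III}_1$. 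Boundedness makes all these quantities finite, so the comparison is between real numbers. This includes the equality boundary $P=Q$.
\end{enumerate}

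For Region $\mathrm{III}_2$, Theorem~\ref{thm:III2compact} already gives $\tau(\B_w)=\|\B_w\|_{\mathrm e}=0$ for every bounded operator. That argument combined fixed-block extinction (Lemma~\ref{lem:extinction}), domination of each block by its full-weight value, and $\ell^R$ summability through Lemma~\ref{lem:countingdc}, and then applied Proposition~\ref{prop:tail-transfer}. Here it is only cited.

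The step needing most care is (1): the limit passage through the infimum must not depend on $(c,d)$, and this is exactly the uniformity in Proposition~\ref{prop:tail-transfer}. For $\mathrm{III}_1$ one also has to use that Theorem~\ref{thm:III1bridge} was proved along a cofinal family, so the directed infimum it controls is the same infimum that defines $\tau$. Once these two points are checked, the proof is bookkeeping.
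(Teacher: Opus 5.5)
Your proposal is correct and follows the paper's proof. The paper simply cites Theorems~\ref{thm:uppercompact}, \ref{thm:III1compact}, \ref{thm:III2compact} together with the exact identity $\|\B_w\|_{\mathrm e}=\tau(\B_w)$ of Theorem~\ref{thm:exactdistance}, and you have unpacked those citations into the same ingredients: Proposition~\ref{prop:tail-transfer} passed through the infimum, followed by Lemma~\ref{lem:twotail} or Theorem~\ref{thm:III1bridge}.
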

\begin{proof}
For the first three regions use Theorem~\ref{thm:uppercompact}. For Region
$\mathrm{III}_1$ use Theorem~\ref{thm:III1compact}. For Region
$\mathrm{III}_2$ use Theorem~\ref{thm:III2compact}.  At operator level the
identity $\|\B_w\|_{\mathrm e}=\tau(\B_w)$ is exact by
Theorem~\ref{thm:exactdistance}. Only the translation from $\tau$ to regional
weight characteristics introduces equivalence constants in the first four
regions.
\end{proof}

\begin{corollary}[Finite rank, Hausdorff noncompactness, and approximation]\label{cor:allquantities}
For every bounded operator,
\[
 \distfr(\B_w)=\|\B_w\|_{\mathrm e},
 \qquad
 \chi_q\bigl(\B_w(B_{p_1}\times B_{p_2})\bigr)=\|\B_w\|_{\mathrm e},
\]
and
\[
 a_n^{(2)}(\B_w)\downarrow\|\B_w\|_{\mathrm e}.
\]
Consequently the endpoint defects in \eqref{eq:masteressential} control all
three quantities in the first four regions, while all of them vanish in
Region $\mathrm{III}_2$.
\end{corollary}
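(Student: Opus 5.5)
The corollary is a bookkeeping consequence of the operator-level identities already proved. It needs no new weight-side analysis. I will first fix a bounded $\B_w$ and recall that Lemma~\ref{lem:localcompact} verifies both local hypotheses of Theorem~\ref{thm:abstractlocal}. These are compactness of every middle truncation $M_{c,d}\B_w$ and its approximability by finite-rank bilinear maps in operator quasi-norm. Theorem~\ref{thm:exactdistance} then gives, with constant one,
\[
 \distfr(\B_w)=\chi_q\bigl(\B_w(B_{p_1}\times B_{p_2})\bigr)=\tau(\B_w)=\|\B_w\|_{\mathrm e},
\]
which are the first two displayed identities. Corollary~\ref{cor:approx}, i.e.\ \eqref{eq:abstract-local-fr} specialised to $\B_w$, gives $a_n^{(2)}(\B_w)\downarrow\tau(\B_w)=\|\B_w\|_{\mathrm e}$. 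Here I only need to recall why there is no index shift: the classes $\{F:\operatorname{rank}F<n\}$ increase and exhaust $\mathcal F_2$, so the monotone limit is exactly $\distfr$.

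For the ``consequently'' clause, I will substitute Theorem~\ref{thm:essential-master}. In Regions $I,\mathrm{II}_1,\mathrm{II}_2,\mathrm{III}_1$, \eqref{eq:masteressential} gives $\|\B_w\|_{\mathrm e}\asymp\max\{\Lambda_{\mathfrak R}^0,\Lambda_{\mathfrak R}^\infty\}$. The three quantities $\distfr$, $\chi_q$ and $\lim_n a_n^{(2)}$ all equal $\|\B_w\|_{\mathrm e}$ exactly, so each inherits the same two-sided comparison with the same constants. The constants come only from Proposition~\ref{prop:tail-transfer} together with Lemma~\ref{lem:twotail} or Theorem~\ref{thm:III1bridge}. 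In Region $\mathrm{III}_2$, Theorem~\ref{thm:III2compact} gives $\tau(\B_w)=0$, hence all three quantities vanish; in particular $a_n^{(2)}(\B_w)\to0$.

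There is no real obstacle. The only point deserving care is the quasi-Banach case $0<q<1$, where one must check that nothing above used a triangle inequality beyond the $\theta$-version. This is already built into Theorem~\ref{thm:abstractlocal}: the finite-rank upper bound uses disjoint output supports and $\ell^q$-additivity, and the Hausdorff bounds \eqref{eq:haus-upper-theta}--\eqref{eq:haus-lower-theta} use only the $\theta$-triangle inequality. I will remark that the powered versions $d_{\mathrm{fr},q}=\chi_{d_q}=e_q=\tau^q$ follow by applying the monotone map $t\mapsto t^q$, so the same endpoint-defect control holds with the comparison constants raised to the power $q$.
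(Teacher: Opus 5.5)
Your proposal is correct and follows the paper's own route: the corollary combines Theorem~\ref{thm:exactdistance} and Corollary~\ref{cor:approx}, which rest on Lemma~\ref{lem:localcompact} and Theorem~\ref{thm:abstractlocal}, with the regional comparison of Theorem~\ref{thm:essential-master}, and in Region $\mathrm{III}_2$ with $\tau(\B_w)=0$ from Theorem~\ref{thm:III2compact}. Your remark on the powered versions for $0<q<1$ agrees with Corollary~\ref{cor:powered}.
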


\begin{corollary}[Powered quasi-Banach formulation]\label{cor:powered}
If $0<q<1$, then
\[
 e_q(\B_w)=d_{\mathrm{fr},q}(\B_w)
 =\chi_{d_q}\bigl(\B_w(B_{p_1}\times B_{p_2})\bigr)
 =\tau(\B_w)^q=\|\B_w\|_{\mathrm e}^q.
\]
In Region $\mathrm{III}_1$ this is comparable to
$\max\{\Lambda_{\mathrm{III}_1}^{0},\Lambda_{\mathrm{III}_1}^{\infty}\}^q$.
In Region $\mathrm{III}_2$ it is exactly zero.
\end{corollary}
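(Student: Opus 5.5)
The statement to prove is Corollary~\ref{cor:powered}. The plan is to read it off from the operator-level identities already proved, adding only the elementary monotonicity of $t\mapsto t^q$ and the regional comparisons.

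\textbf{Step 1: the chain of equalities.} For $0<q<1$ and bounded $\B_w$, the powered identity \eqref{eq:powered} of Theorem~\ref{thm:exactdistance} gives directly
\[
e_q(\B_w)=d_{\mathrm{fr},q}(\B_w)=\chi_{d_q}\bigl(\B_w(B_{p_1}\times B_{p_2})\bigr)=\tau(\B_w)^q .
\]
Its local hypotheses (compactness and finite-rank approximability of $M_{c,d}\B_w$) are supplied by Lemma~\ref{lem:localcompact}. The final equality $\tau(\B_w)^q=\|\B_w\|_{\mathrm e}^q$ follows from $\|\B_w\|_{\mathrm e}=\tau(\B_w)$ in \eqref{eq:exactdistance}. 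Equivalently, since $t\mapsto t^q$ is increasing and continuous on $[0,\infty)$, it commutes with infima, so $e_q(T)=\|T\|_{\mathrm e}^q$ directly from the definitions.

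\textbf{Step 2: Region $\mathrm{III}_1$.} Theorem~\ref{thm:III1compact} (equivalently Theorem~\ref{thm:essential-master}) gives constants $0<c_1\le c_2$ with
\[
c_1\max\{\Lambda^0_{\mathrm{III}_1},\Lambda^\infty_{\mathrm{III}_1}\}\le\|\B_w\|_{\mathrm e}\le c_2\max\{\Lambda^0_{\mathrm{III}_1},\Lambda^\infty_{\mathrm{III}_1}\}.
\]
Raising to the power $q$ preserves the inequalities and yields comparability with constants $c_1^q,c_2^q$. These depend only on the exponents and structural parameters, through Proposition~\ref{prop:tail-transfer} and Theorem~\ref{thm:III1bridge}; the latter contributes the factor $2C_q$.

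\textbf{Step 3: Region $\mathrm{III}_2$.} Theorem~\ref{thm:III2compact} gives $\tau(\B_w)=0$, so every quantity in the chain equals $0$.

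\textbf{Where care is needed.} There is no real obstacle. The one point to watch is that the powered Hausdorff identity does not silently rely on a Banach triangle inequality. This is already handled in Theorem~\ref{thm:abstractlocal}: a $d_q$-ball of radius $\rho$ coincides with a quasi-norm ball of radius $\rho^{1/q}$, so $\chi_{d_q}=\chi_q^q$, and the $\theta$-triangle inequality with $\theta=q$ was used throughout. I will simply cite that step rather than reprove it.
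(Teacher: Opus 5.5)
Your proposal is correct and is essentially the argument the paper intends; the corollary is stated without a separate proof. The powered chain is \eqref{eq:powered} of Theorem~\ref{thm:exactdistance}, with $e_q=\|\cdot\|_{\mathrm e}^q$ by monotonicity of $t\mapsto t^q$; the Region $\mathrm{III}_1$ comparison is Theorem~\ref{thm:III1compact} raised to the $q$th power; and Region $\mathrm{III}_2$ follows from $\tau(\B_w)=0$ in Theorem~\ref{thm:III2compact}, with boundedness of $\B_w$ implicitly assumed as in the surrounding results.
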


\section{Examples and boundary phenomena}\label{sec:examples}

The following examples calibrate the operator-localisation theorem and the two
sides of the subcritical transition.

\begin{example}[Compact support, a direct calibration]\label{ex:compact-support}
Let
\[
 a_i(x)=x,\qquad b_i(x)=\lambda_i x,\qquad v_i=1,
 \qquad w=\chi_{[\alpha,\beta]},
\]
where $\lambda_i>1$ and $0<\alpha<\beta<\infty$.  For
$x\in[\alpha,\beta]$, H\"older's inequality gives
\[
 |H_if(x)|
 \le ((\lambda_i-1)x)^{1/p_i'}\|f\|_{p_i}
 \le ((\lambda_i-1)\beta)^{1/p_i'}\|f\|_{p_i}.
\]
Therefore, for every admissible $0<q<\infty$,
\[
 \|\B_w(f,g)\|_q
 \le (\beta-\alpha)^{1/q}
 \prod_{i=1}^2((\lambda_i-1)\beta)^{1/p_i'}
 \|f\|_{p_1}\|g\|_{p_2}.
\]
Thus boundedness is obtained directly, without invoking any regional
characteristic.  If $c<\alpha$ and $d>\beta$, then $w_{c,d}=0$ and hence
$\gamma(c,d)=0$.  The exact tail criterion therefore makes $\B_w$ compact.
\end{example}

\begin{example}[Equality boundary, scale-invariant noncompactness]\label{ex:equality-boundary}
Take
\[
 p_1=p_2=2,\qquad q=1,\qquad
 a_i(x)=x,\qquad b_i(x)=2x,\qquad v_i=1,\qquad w(x)=x^{-2}.
\]
Then $P=Q=2$, so the point belongs to Region $\mathrm{III}_1$.  The operator is
bounded.  Indeed, with
\[
 A_f(x)=x^{-1}\left|\int_x^{2x}f(t)\,dt\right|,
\]
Cauchy--Schwarz and Fubini give
\[
 \int_0^\infty A_f(x)^2\,dx
 \le \int_0^\infty\frac1x\int_x^{2x}|f(t)|^2\,dt\,dx
 = (\log2)\|f\|_2^2.
\]
Hence
\[
 \|\B_w(f,g)\|_1
 \le \|A_f\|_2\|A_g\|_2
 \le (\log2)\|f\|_2\|g\|_2.
\]

For $\lambda>0$ set
\[
 f_\lambda(t)=\lambda^{1/2}f(\lambda t),\qquad
 g_\lambda(t)=\lambda^{1/2}g(\lambda t).
\]
A change of variables gives
\[
 \|f_\lambda\|_2=\|f\|_2,\qquad
 \|g_\lambda\|_2=\|g\|_2,
\]
and
\[
 \int_x^{2x}f_\lambda(t)\,dt
 =\lambda^{-1/2}\int_{\lambda x}^{2\lambda x}f(u)\,du.
\]
Consequently the output transforms exactly as
\begin{equation}\label{eq:dilation-output}
 \B_w(f_\lambda,g_\lambda)(x)
 =\lambda\,\B_w(f,g)(\lambda x),
\end{equation}
so its $L^1$ norm is unchanged.

Fix $0<c<d<\infty$ and $\eta>0$.  Choose unit inputs $f,g$ such that, with
$y=\B_w(f,g)$,
\[
 \|y\|_1>\|\B_w\|-\eta.
\]
Choose $0<A<B<\infty$ with
$\int_A^B|y|>\|y\|_1-\eta$.  If $\lambda>B/c$, then by
\eqref{eq:dilation-output}
\[
 \int_0^c|\B_w(f_\lambda,g_\lambda)(x)|\,dx
 =\int_0^{\lambda c}|y(u)|\,du
 >\|\B_w\|-2\eta.
\]
If instead $0<\lambda<A/d$, then
\[
 \int_d^\infty|\B_w(f_\lambda,g_\lambda)(x)|\,dx
 =\int_{\lambda d}^\infty|y(u)|\,du
 >\|\B_w\|-2\eta.
\]
The input norms are unchanged in both cases.  Since the tail operator is a
contraction, letting $\eta\downarrow0$ proves the exact identity
\[
 \gamma(c,d)=\|\B_w\|\qquad(0<c<d<\infty).
\]
Thus $\tau(\B_w)=\|\B_w\|>0$ and the operator is noncompact.  This shows that
the automatic-compactness conclusion valid for $P>Q$ cannot be extended to the
boundary $P=Q$.
\end{example}

\begin{example}[Bounded Region-$\mathrm{III}_2$ operator with noncompact support]\label{ex:III2-fullsupport}
Let
\[
 p_1=p_2=2,\qquad q=\frac12,\qquad
 a_i(x)=x,\qquad b_i(x)=2x,\qquad v_i=1,
\]
and
\[
 w(x)=e^{-x-1/x}.
\]
Then $P=4$, $Q=4/3$, and $R=2$.  Since
\[
 |H_if(x)|\le x^{1/2}\|f\|_2,
\]
we have
\[
 |\B_w(f,g)(x)|^{1/2}
 \le (xw(x))^{1/2}\|f\|_2^{1/2}\|g\|_2^{1/2}.
\]
As
\[
 \int_0^\infty (xw(x))^{1/2}\,dx<\infty,
\]
$\B_w:L^2\times L^2\to L^{1/2}$ is bounded.  Its output weight is positive on
the whole half-line, so compactness is not caused by compact support.  The
Region-$\mathrm{III}_2$ theorem nevertheless makes the operator compact.
\end{example}

\begin{example}[A genuine central obstruction]\label{ex:central}
Let
\[
 a_1(x)=a_2(x)=x,\qquad b_1(x)=2x,\qquad b_2(x)=4x,
\]
\[
 p_1=p_2=2,\qquad q=1,\qquad v_1=v_2=1,\qquad w(x)=x^{-2}.
\]
Then
\[
 y_\ell(t)=b_1(b_2^{-1}(t))=\frac t2,\qquad
 y_r(t)=a_1(a_2^{-1}(t))=t,
\]
and hence
\[
 a(t)=\frac t2,\qquad b(t)=t.
\]
The unique central component is of left type.  Moreover
\[
 m_{1,w}(y)
 =\int_{b_1^{-1}(y)}^{a_1^{-1}(y)}x^{-2}\,dx
 =\int_{y/2}^{y}x^{-2}\,dx
 =\frac1y.
\]
Substitution into the central profile \eqref{eq:A-nu} gives
\begin{equation}\label{eq:central-calibration-profile}
 \sup_{s\le t\le2s}
 (t-s)^{1/2}
 \left(\int_{t/2}^{s}y^{-2}\,dy\right)^{1/2}.
\end{equation}
Choosing $t=3s/2$ yields
\[
 (t-s)^{1/2}
 \left(\int_{t/2}^{s}y^{-2}\,dy\right)^{1/2}
 =\left(\frac s2\right)^{1/2}
  \left(\frac1{3s}\right)^{1/2}
 =\frac1{\sqrt6}.
\]
Thus $\Adeg[w]\ge1/\sqrt6$.

The same constant survives at both output endpoints.  For a lower cutoff
$W=w\chi_{(0,c)}$, choose $s<c$ and $t=3s/2$.  Every interval
$[y/2,y]$ with $y\in[t/2,s]$ then lies below $c$, so
$m_{1,W}(y)=1/y$ throughout the profile and
\[
 \Adeg[w\chi_{(0,c)}]\ge\frac1{\sqrt6}.
\]
For an upper cutoff $W=w\chi_{(d,\infty)}$, choose
$s>8d/3$ and again $t=3s/2$.  Then the smallest lower endpoint of the physical
mass intervals is $(t/2)/2=3s/8>d$, so once more
$m_{1,W}(y)=1/y$ throughout and
\[
 \Adeg[w\chi_{(d,\infty)}]\ge\frac1{\sqrt6}.
\]
Hence both central endpoint defects are nonzero.  Thus the central term
$\mathcal A_{\rm deg}$ has a persistent nonzero endpoint defect in Region
$\mathrm{III}_1$.
\end{example}

\appendix
\section{The Region $\mathrm{III}_2$ discrete characteristic}\label{app:III2}

For completeness we record the block characteristic used in
Section~\ref{sec:III2}.  Let $R$ be given by \eqref{eq:R}.

\subsection{Left blocks}
Let
\[
 \xi_0=1,\qquad
 \xi_{k+1}=Y_\ell^{-1}\circ a(\xi_k),\qquad
 \xi_{k-1}=a^{-1}\circ Y_\ell(\xi_k),
\]
so that $Y_\ell(\xi_{k+1})=a(\xi_k)$.  Define
\begin{align*}
B_{\ell,k,1}[W]^R
={}&\int_{Y_\ell(\xi_k)}^{a(\xi_k)}
\left[\int_{\xi_k}^{Y_\ell^{-1}(t)}
 k_{\ell,W}(t,x)^{r_2/q}\mathcal V_2(x)\,dx\right]^{Rq/r_2}\\
&\qquad\times
\left[\int_t^{a(\xi_k)}\mathcal V_1(y)\,dy\right]^{R(r_2-q)/r_2}
\mathcal V_1(t)\,dt,
\end{align*}
\begin{align*}
B_{\ell,k,2}[W]^R
={}&\int_{\xi_k}^{\xi_{k+1}}
\left[\int_{\xi_k}^{t}\mathcal V_2(x)\,dx\right]^{R(r_1-q)/r_1}\\
&\qquad\times
\left[\int_{Y_\ell(t)}^{a(\xi_k)}
 k_{\ell,W}(y,t)^{r_1/q}\mathcal V_1(y)\,dy\right]^{Rq/r_1}
\mathcal V_2(t)\,dt,
\end{align*}
\begin{align*}
B_{\ell,k,3}[W]^R
={}&\int_{Y_\ell(\xi_{k+1})}^{a(\xi_{k+1})}
\left[\int_{a^{-1}(t)}^{\xi_{k+1}}
 k_{\ell,W}(a(\xi_k),x)^{r_2/q}\mathcal V_2(x)\,dx\right]^{Rq/r_2}\\
&\qquad\times
\left[\int_{Y_\ell(\xi_{k+1})}^{t}\mathcal V_1(y)\,dy\right]^{R(r_2-q)/r_2}
\mathcal V_1(t)\,dt,
\end{align*}
\begin{align*}
B_{\ell,k,4}[W]^R
={}&\int_{\xi_k}^{\xi_{k+1}}
\left[\int_t^{\xi_{k+1}}\mathcal V_2(x)\,dx\right]^{R(r_1-q)/r_1}\\
&\qquad\times
\left[\int_{a(\xi_k)}^{a(t)}
 k_{\ell,W}(y,\xi_{k+1})^{r_1/q}\mathcal V_1(y)\,dy\right]^{Rq/r_1}
\mathcal V_2(t)\,dt.
\end{align*}
Then
\[
 D_{\ell,\mathrm{III}_2}[W]=
 \left(\sum_{k\in\mathbb Z}\sum_{j=1}^4B_{\ell,k,j}[W]^R\right)^{1/R}.
\]

\subsection{Central blocks}
For each central component $J_\nu$ define
\[
 T_\nu=a^{-1}\circ b:J_\nu\to J_\nu.
\]
Choose $\lambda_{\nu,0}\in J_\nu$ and set
\begin{equation}\label{eq:central-biinfinite}
 \lambda_{\nu,k}=T_\nu^k(\lambda_{\nu,0}),\qquad k\in\mathbb Z,
\end{equation}
where negative powers use
$T_\nu^{-1}=b^{-1}\circ a$.  Since $a(t)<b(t)$ on $J_\nu$ and both maps are
strictly increasing,
\[
 T_\nu(t)>t,\qquad
 a(\lambda_{\nu,k+1})=b(\lambda_{\nu,k}).
\]
The map $T_\nu$ is an increasing homeomorphism of $J_\nu$ onto itself.  Indeed,
if a point between $t$ and $T_\nu(t)$ satisfied $a=b$, strict monotonicity would
give the contradiction
$b(t)<b(u)=a(u)<a(T_\nu(t))=b(t)$. The same argument applies to the inverse.
Thus the iterates remain in the same component.  The sequence is strictly
increasing in $k$, and
\[
 \lambda_{\nu,k}\longrightarrow\inf J_\nu\quad(k\to-\infty),
 \qquad
 \lambda_{\nu,k}\longrightarrow\sup J_\nu\quad(k\to+\infty).
\]
A finite interior limit would be a fixed point of $T_\nu$, impossible because
$T_\nu(t)>t$.  Consequently the blocks
$[\lambda_{\nu,k},\lambda_{\nu,k+1}]$, $k\in\mathbb Z$, cover the entire
component (up to shared endpoints).  This is the componentwise version of the
bi-infinite discretisation in Section~3.2, equations (3.5)--(3.6), of
\cite{StepanovUshakova2010MIA}.

Define
\begin{align*}
\mathfrak b_{\nu,k,1}[W]^R
={}&\int_{a(\lambda_{\nu,k})}^{a(\lambda_{\nu,k+1})}
\left(\int_{\lambda_{\nu,k}}^{a^{-1}(t)}U_{\nu,W}(x)^Q\,dx\right)^{R/Q}\\
&\qquad\times
\left(\int_t^{a(\lambda_{\nu,k+1})}V_{\nu,W}(y)^{P'}\,dy\right)^{R/Q'}
V_{\nu,W}(t)^{P'}\,dt,
\end{align*}
\begin{align*}
\mathfrak b_{\nu,k,2}[W]^R
={}&\int_{\lambda_{\nu,k}}^{\lambda_{\nu,k+1}}
\left(\int_{\lambda_{\nu,k}}^tU_{\nu,W}(x)^Q\,dx\right)^{R/P}\\
&\qquad\times
\left(\int_{a(t)}^{a(\lambda_{\nu,k+1})}V_{\nu,W}(y)^{P'}\,dy\right)^{R/P'}
U_{\nu,W}(t)^Q\,dt,
\end{align*}
\begin{align*}
\mathfrak b_{\nu,k,3}[W]^R
={}&\int_{b(\lambda_{\nu,k})}^{b(\lambda_{\nu,k+1})}
\left(\int_{b^{-1}(t)}^{\lambda_{\nu,k+1}}U_{\nu,W}(x)^Q\,dx\right)^{R/Q}\\
&\qquad\times
\left(\int_{b(\lambda_{\nu,k})}^{t}V_{\nu,W}(y)^{P'}\,dy\right)^{R/Q'}
V_{\nu,W}(t)^{P'}\,dt,
\end{align*}
\begin{align*}
\mathfrak b_{\nu,k,4}[W]^R
={}&\int_{\lambda_{\nu,k}}^{\lambda_{\nu,k+1}}
\left(\int_t^{\lambda_{\nu,k+1}}U_{\nu,W}(x)^Q\,dx\right)^{R/P}\\
&\qquad\times
\left(\int_{b(\lambda_{\nu,k})}^{b(t)}V_{\nu,W}(y)^{P'}\,dy\right)^{R/P'}
U_{\nu,W}(t)^Q\,dt.
\end{align*}
Set
\[
 \Bdeg[W]=
 \left(\sum_{\nu\in\mathcal N}\sum_{k\in\mathbb Z}\sum_{j=1}^4
 \mathfrak b_{\nu,k,j}[W]^R\right)^{1/R},
\]
with value $0$ if $\Omega=\varnothing$.

\subsection{Right blocks}
Let
\[
 \zeta_0=1,\qquad
 \zeta_{k+1}=b^{-1}\circ Y_r(\zeta_k),\qquad
 \zeta_{k-1}=Y_r^{-1}\circ b(\zeta_k),
\]
so $b(\zeta_{k+1})=Y_r(\zeta_k)$.  Define
\begin{align*}
\widehat B_{r,k,1}[W]^R
={}&\int_{b(\zeta_k)}^{Y_r(\zeta_k)}
\left[\int_{\zeta_k}^{b^{-1}(t)}
 k_{r,W}(x,Y_r(\zeta_k))^{r_2/q}\mathcal V_2(x)\,dx\right]^{Rq/r_2}\\
&\qquad\times
\left[\int_t^{Y_r(\zeta_k)}\mathcal V_1(y)\,dy\right]^{R(r_2-q)/r_2}
\mathcal V_1(t)\,dt,
\end{align*}
\begin{align*}
\widehat B_{r,k,2}[W]^R
={}&\int_{\zeta_k}^{\zeta_{k+1}}
\left[\int_{\zeta_k}^{t}\mathcal V_2(x)\,dx\right]^{R(r_1-q)/r_1}\\
&\qquad\times
\left[\int_{b(t)}^{b(\zeta_{k+1})}
 k_{r,W}(\zeta_k,y)^{r_1/q}\mathcal V_1(y)\,dy\right]^{Rq/r_1}
\mathcal V_2(t)\,dt,
\end{align*}
\begin{align*}
\widehat B_{r,k,3}[W]^R
={}&\int_{b(\zeta_{k+1})}^{Y_r(\zeta_{k+1})}
\left[\int_{Y_r^{-1}(t)}^{\zeta_{k+1}}
 k_{r,W}(x,t)^{r_2/q}\mathcal V_2(x)\,dx\right]^{Rq/r_2}\\
&\qquad\times
\left[\int_{b(\zeta_{k+1})}^{t}\mathcal V_1(y)\,dy\right]^{R(r_2-q)/r_2}
\mathcal V_1(t)\,dt,
\end{align*}
\begin{align*}
\widehat B_{r,k,4}[W]^R
={}&\int_{\zeta_k}^{\zeta_{k+1}}
\left[\int_t^{\zeta_{k+1}}\mathcal V_2(x)\,dx\right]^{R(r_1-q)/r_1}\\
&\qquad\times
\left[\int_{Y_r(\zeta_k)}^{Y_r(t)}
 k_{r,W}(t,y)^{r_1/q}\mathcal V_1(y)\,dy\right]^{Rq/r_1}
\mathcal V_2(t)\,dt.
\end{align*}
Thus
\[
 D_{r,\mathrm{III}_2}^{\mathrm{corr}}[W]=
 \left(\sum_{k\in\mathbb Z}\sum_{j=1}^4
 \widehat B_{r,k,j}[W]^R\right)^{1/R}.
\]
Combining the three terms gives \eqref{eq:DIII2}.

\section{Continuous-boundary and source interfaces}\label{app:interface}

The subcritical reconstruction uses one-sided Hardy--Oinarov criteria after
coordinate changes.  The printed source reductions are commonly written for
differentiable boundary functions, whereas the derived maps
$a,b,Y_\ell,Y_r$ in this paper are only continuous and strictly increasing.
This appendix records precisely why the source criteria remain available
without introducing an inverse derivative.

The measure-theoretic source is Prokhorov \cite{Prokhorov2007}.  In the
notation of that paper, Theorem~1 treats $1<p\le q<\infty$ and Theorem~2
treats $1<q<p<\infty$ for the Hardy--Oinarov operator with positive Borel
$\sigma$-finite measures $\lambda,\mu$, with $\nu=\lambda$ and source
multiplier $w\equiv1$. The coefficient functions $u,v$ are nonnegative Borel
functions, while the kernel is nonnegative, $\mu\times\lambda$-measurable,
and satisfies the ordered Oinarov relation.  In each application below the
input integration measure is the relevant Lebesgue or pullback measure
$\lambda$, the input norm uses $\nu=\lambda$, and the remaining input and
output densities are absorbed into Prokhorov's coefficient functions $u$ and
$v$.  Thus no extra measure or multiplier hypothesis is being suppressed.  In
the notation of the present subcritical reduction we apply those theorems with
\[
 p=P,\qquad q=Q.
\]
Thus Theorem~1 covers Region $\mathrm{III}_1$, where $1<P\le Q<\infty$, and
Theorem~2 covers Region $\mathrm{III}_2$, where $1<Q<P<\infty$.  The printed
one-sided formulas used in the variable-boundary reduction are Theorem~2.2 and
Corollaries~2.1--2.2 of \cite{StepanovUshakova2010MIA}.  Its
Lemma~3.1 supplies the block-diagonal norm aggregation, equations
(3.5)--(3.6) in Section~3.2 supply the bi-infinite discretisation, and
Theorems~3.1--3.2 assemble the two one-sided pieces.  Related one-sided
lower/upper estimates are Theorems~2.5--2.6 of
\cite{StepanovUshakova2010BJMA}.

\begin{lemma}[Continuous-boundary one-sided interface]\label{lem:continuous-boundary}
The one-sided Oinarov-kernel criteria used in
\eqref{eq:Al0}--\eqref{eq:Ar1}, and their $R$-summable analogues in
Appendix~\ref{app:III2}, remain valid when a derived boundary is continuous and
strictly increasing rather than differentiable.  The comparison constants
depend only on $P,Q$ and the fixed Oinarov constant. In particular they are
independent of every output cutoff and every component index.
\end{lemma}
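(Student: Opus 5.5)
The plan is to reduce every continuous-boundary instance to a differentiable, or rather measure-theoretic, instance by a change of variables through the Lebesgue--Stieltjes pullback. The Hardy--Oinarov criteria of \cite{Prokhorov2007} are stated for arbitrary positive Borel $\sigma$-finite measures, so no derivative of a boundary map is needed. Let $\varphi$ be a derived boundary map ($a,b,Y_\ell,Y_r$ or an inverse of one). It is a continuous strictly increasing bijection of $(0,\infty)$, or of a component $J_\nu$, onto its image. Each one-sided operator arising in the reduction has the form
\[
 g\longmapsto \int_{\varphi(x)}^{\cdot}k(\cdot,y)g(y)\,d\lambda(y).
\]
I would substitute $y=\varphi(s)$ and replace Lebesgue measure in the transformed variable by the image measure $\lambda_\varphi=\varphi^{-1}_{\#}(dy)$, that is, $\lambda_\varphi(E)=|\varphi(E)|$. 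This is a positive Borel measure, and it is $\sigma$-finite because $\varphi$ maps compacts to compacts. The substitution formula $\int h(\varphi(s))\,d\lambda_\varphi(s)=\int h(y)\,dy$ holds by definition for every Borel $h\ge0$, so input norms $L^P(\lambda)$ are preserved isometrically. Moreover the variable limit $\varphi(x)$ becomes the identity limit $x$ in the new coordinates.

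Second, I would check the Oinarov kernel condition in the new coordinates. The kernels $k_{\ell,W},k_{r,W}$ are integrals of $W^q$ over intervals whose endpoints are monotone in both variables. Hence they satisfy $k(x,y)\le k(x,z)+k(z,y)$ with Oinarov constant $1$, by additivity of the integral over adjacent intervals. This relation is purely order-theoretic, so it is preserved under composition with the increasing bijection $\varphi$. The constant therefore stays $1$, regardless of $W$, and in particular regardless of $W=w_{c,d}$ or $W=w\chi_{(0,c)}$. I would then absorb $\mathcal V_1,\mathcal V_2$ and any remaining densities into Prokhorov's coefficient functions $u,v$. This is legitimate since these functions are only required to be Borel and nonnegative. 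I would apply Theorem~1 ($P\le Q$) or Theorem~2 ($Q<P$) of \cite{Prokhorov2007}, whose constants depend only on $P,Q$ and the Oinarov constant.

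Third, I would pull the resulting measure-theoretic characteristic back through $\varphi$ and confirm that it coincides with the displayed formulas \eqref{eq:Al0}--\eqref{eq:Ar1}, and with the block expressions of Appendix~\ref{app:III2}. Every integral of the form $\int_{\varphi(s)}^{\varphi(t)}\mathcal V_1$ equals $\lambda_\varphi$-integration of $\mathcal V_1\circ\varphi$ over $[s,t]$, so the identification is again the substitution formula alone. For Region $\mathrm{III}_2$, the discretisation points $\xi_k,\zeta_k,\lambda_{\nu,k}$ are defined through $\varphi$ and its inverse only. Their block-diagonal aggregation (Lemma~3.1 of \cite{StepanovUshakova2010MIA}) uses only disjointness of blocks and the quasi-triangle inequality in $\ell^R$, with constants depending on $P,Q$. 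On each component $J_\nu$ the same argument runs with $J_\nu$ in place of $(0,\infty)$. The constants come from the same two theorems, so they are independent of $\nu$ and of the number of components.

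I expect the main obstacle to be the third step. There the printed formulas in \cite{StepanovUshakova2010MIA} are written with factors like $(\varphi^{-1})'$ hidden inside the transformed densities. One must verify that each such factor appears exactly as the Radon--Nikodym density of $\lambda_\varphi$ when $\varphi$ is absolutely continuous, and that it disappears into the Stieltjes form otherwise. I would handle this by never introducing derivatives: I would write each source characteristic directly in Prokhorov's measure form, and only afterwards specialise, so that differentiability is never invoked.
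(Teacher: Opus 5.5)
Your proposal is correct and is essentially the paper's argument: the pullback measure $\mu_\beta(E)=|\beta(E)|$, the derivative-free substitution formula, transport of the Oinarov relation with an unchanged constant, and Prokhorov's Theorems~1--2. The cutoff enters only through the absorbed Borel weights, which is why the constants are independent of $c,d$ and $\nu$. Two small additions bring it fully in line with the paper: treat the moving-lower-boundary (Copson) orientation explicitly by an order-reversing reparametrisation, which your single generic operator form leaves implicit, and write the kernel splitting in the boundary-adapted form $k(x,\beta(z))+k(z,y)$, where your exact additivity with constant $1$ is genuinely an equality.
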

\begin{proof}
We match the source hypotheses explicitly.  Let $I$ be an interval and let
$\beta:I\to\beta(I)$ be continuous and strictly increasing.  Since $\beta$ is
a homeomorphism onto an interval, define on Borel sets
\begin{equation}\label{eq:pullback-measure}
 \mu_\beta(E)=|\beta(E)|.
\end{equation}
Equivalently, $\mu_\beta$ is the pullback of Lebesgue measure under this
homeomorphism.  It is a positive Borel measure.  It is locally finite because
for every compact interval $[u,v]\Subset I$,
\[
 \mu_\beta([u,v])=\beta(v)-\beta(u)<\infty,
\]
and hence it is $\sigma$-finite by a compact exhaustion of $I$.  This verifies
the measure hypotheses in Theorems~1 and~2 of \cite{Prokhorov2007}.

If $F$ is nonnegative and measurable, then
\begin{equation}\label{eq:stieltjes-pullback}
 \int_I F(t)\,d\mu_\beta(t)
 =\int_{\beta(I)}F(\beta^{-1}(y))\,dy.
\end{equation}
All weights occurring in the reduction are measurable. After changing values
on null sets they may be represented by Borel functions, as required in the
source theorem.  This does not change any characteristic.  If the original
kernel $k$ is Borel (in the source applications it is continuous or is a
measurable integral kernel built from the weights), then
\[
 \widetilde k(x,t)=k(x,\beta(t))
\]
is product-measurable.  Thus the source measurability assumptions are also
preserved.

Consider first a moving upper boundary.  Formula
\eqref{eq:stieltjes-pullback} gives exactly
\begin{equation}\label{eq:upper-pullback}
 \int_{\beta(c)}^{\beta(x)}k(x,y)f(y)V(y)\,dy
 =\int_{(c,x)}k(x,\beta(t))f(\beta(t))V(\beta(t))\,d\mu_\beta(t).
\end{equation}
No identity of the form $dy=\beta'(t)\,dt$ is used.  Suppose the one-sided
kernel satisfies the boundary-adapted Oinarov relation
\[
 D^{-1}k(x,y)
 \le k(x,\beta(z))+k(z,y)
 \le Dk(x,y),
 \qquad c\le z\le x,\quad y\le\beta(z).
\]
Putting $y=\beta(t)$ shows that $\widetilde k$ satisfies Prokhorov's ordered
Oinarov relation for $t\le z\le x$ with the \emph{same} constant $D$.
Because $\beta$ is order preserving, all splitting points and order relations
in the source proof are unchanged.  Hence Theorem~1 or~2 of
\cite{Prokhorov2007}, according as $P\le Q$ or $Q<P$, applies to
\eqref{eq:upper-pullback}.  Undoing the pullback gives exactly the
moving-upper-boundary orientation of Corollary~2.1 in
\cite{StepanovUshakova2010MIA}, now without differentiability of $\beta$.

For a moving lower boundary, the same pullback with the lower boundary
$\alpha$ produces a Copson-ordered integral over $(x,d)$.  Regard that interval
with the opposite order, or equivalently push both source measures forward by
an order-reversing homeomorphism onto an ordinary interval.  Positive Borel
$\sigma$-finite measures remain such, and the Oinarov splitting relation is
carried into the standard ordered form with the same constant $D$.  Applying
Prokhorov's Theorem~1 or~2 and undoing the order reversal yields the
moving-lower-boundary orientation of Corollary~2.2 in
\cite{StepanovUshakova2010MIA}.  Again no derivative, inverse derivative, or
absolute-continuity assumption on the boundary is introduced.

It remains to check uniformity.  The coordinate changes above are exact
measure-space isometries.  The constants in Prokhorov's Theorems~1--2 depend
only on the two exponents and the Oinarov constant.  Multiplying the output
weight by $\chi_{(0,c)}$, $\chi_{(d,\infty)}$, or
$\chi_{(0,c)\cup(d,\infty)}$ changes only the measurable weights in the
characteristic and does not change the structural Oinarov constant of the
derived kernels.  Therefore the source constants are independent of the
cutoff.  The same argument on a component interval $J_\nu$ uses only an order
homeomorphism and the corresponding pullback measures, so the constants are
also independent of $\nu$.  Lemma~3.1 and Theorems~3.1--3.2 of
\cite{StepanovUshakova2010MIA} can consequently be assembled componentwise with
one uniform constant.
\end{proof}

Apply the same generic pullback to each full-window/mixed fairway. For $i=1,2$,
\[
 \mu_{\sigma_i}(E)=|\sigma_i(E)|,\qquad
 \widetilde a_i=\sigma_i^{-1}\circ a_i,\qquad
 \widetilde b_i=\sigma_i^{-1}\circ b_i.
\]
Relative to $\mu_{\sigma_i}$ the identity is the splitting point, since
\[
\begin{aligned}
 \int_{\widetilde a_i(x)}^x
 v_i(\sigma_i(t))^{p_i'}\,d\mu_{\sigma_i}(t)
 &=\int_{a_i(x)}^{\sigma_i(x)}v_i(y)^{p_i'}\,dy\\
 &=\int_{\sigma_i(x)}^{b_i(x)}v_i(y)^{p_i'}\,dy\\
 &=\int_x^{\widetilde b_i(x)}
 v_i(\sigma_i(t))^{p_i'}\,d\mu_{\sigma_i}(t).
\end{aligned}
\]
Thus the equal-mass relation is transported exactly, without $\sigma_i'$ and
without changing fairway regularity, any characteristic, or any comparison
constant.

For clarity, the central piece requires no additional source assumption.  On
each $J_\nu$ its kernel is $k\equiv1$, which is Borel and satisfies the standard
Oinarov relation with a fixed constant, for example $D=2$.  In Region
$\mathrm{III}_1$ the effective exponents obey $1<P\le Q<\infty$. Prokhorov's
Theorem~1 supplies the one-sided estimates, and the two-boundary assembly in
Theorem~3.1 of \cite{StepanovUshakova2010MIA}, specialised to $k\equiv1$,
yields the supremal central criterion \eqref{eq:A-nu}.  In Region
$\mathrm{III}_2$ one has $1<Q<P<\infty$. Prokhorov's Theorem~2 supplies the
integrable one-sided criteria, and the block assembly of Section~3 of
\cite{StepanovUshakova2010MIA} yields the four central blocks in
Appendix~\ref{app:III2}.  The bi-infinite component discretisation was made
explicit in \eqref{eq:central-biinfinite}.  Thus the central $k\equiv1$ case is
included with constants independent of the component.  The lower-regularity
Hardy--Steklov results of \cite{BernardisMartinReyesOrtega2007} are consistent
with this continuous-boundary formulation, but they are not used here as a
substitute for the Prokhorov hypothesis check.

\begin{remark}[Source-interface checklist]\label{rem:source-checklist}
The preceding argument verifies, in order, positivity and Borel
$\sigma$-finiteness of the pullback measures, local finiteness, measurable
pulled-back weights and kernels, preservation of the Oinarov relation and its
constant, preservation of order and splitting points, both moving-boundary
orientations, the effective exponent ranges $1<P\le Q$ and $1<Q<P$, absence of
inverse derivatives, cutoff-independent constants, component-independent
constants, stability under component coordinate changes, and the central
$k\equiv1$ case.  These are exactly the hypotheses needed at the source
interface.
\end{remark}

\begin{remark}[Right-kernel orientation]
The right kernel is
\[
 k_r(t,y)=\int_{b_1^{-1}(y)}^{a_2^{-1}(t)}W(x)^q\,dx.
\]
The variable roles in the right discrete decomposition therefore produce the
four occurrences listed in \eqref{eq:right-orientations}.  These are the forms
used in Appendix~\ref{app:III2}.  They are also the forms obtained by tracing
the right Oinarov identity through the block variables.  This explicit
variable-role derivation is used here rather than importing the displayed
right-block formula of \cite{JainKanjilalStepanovUshakova2018} without
reconciliation.
The printed right-block display in that source is malformed. It uses the wrong
right-kernel family/orientation and reversed argument ordering, while blocks 2
and 4 use $Q$ where the one-sided block theorem requires $P'=r_1/q$.
Appendix~\ref{app:III2} is derived from that theorem. This source-interface
reconciliation is not presented as a formal erratum to the earlier paper.
\end{remark}

\section*{Use of generative AI}

OpenAI's ChatGPT, including ChatGPT Work, and Codex were used as assistive
tools during manuscript preparation for mathematical consistency checking,
language refinement, readability improvement, and LaTeX preparation. The
author independently reviewed the mathematical arguments, references, and
final manuscript and takes full responsibility for all statements, proofs,
and conclusions.

% The bibliography is maintained inline. The detached .bib is archival metadata.

\end{document}